\newtheorem{theorem}{Theorem}[section]
\newtheorem{fact}[theorem]{Fact}
\newtheorem{corollary}[theorem]{Corollary}
\newtheorem{lemma}[theorem]{Lemma}
\newtheorem{proposition}[theorem]{Proposition}
\newtheorem{question}[theorem]{Question}
\newtheorem{definition}[theorem]{Definition}
\numberwithin{equation}{section}
\newtheorem{prob}[theorem]{Problem}  
\theoremstyle{remark}
\newtheorem{remark}[theorem]{Remark}
\newtheorem{example}[theorem]{Example}
\newcommand{\ben}{\begin{enumerate}}
\newcommand{\een}{\end{enumerate}}
\newcommand{\bit}{\begin{itemize}}
\newcommand{\eit}{\end{itemize}}
\def\R {{\Bbb R}}
\def\Q {{\Bbb Q}}
\def \F {{\Bbb F}}
\def\N{{\Bbb N}}
\def\T{{\Bbb T}}
\def\Z {{\Bbb Z}}
\def\F{{\Bbb F}}
\def\Aut{{\mathrm Aut}\,}
\def\eps{{\varepsilon}}
\def\QED{\nobreak\quad\ifmmode\roman{Q.E.D.}\else{\rm Q.E.D.}\fi}
\def\a {\alpha}
\def\s {\sigma}
\def\g {\gamma}
\def\t {\tau}
\def\sk {\vskip 0.3cm}
\def\PGL{\operatorname{PGL}}
\def\GL{\operatorname{GL}}
\def\UT{\operatorname{UI}}
\def\UT{\operatorname{UT}}
\begin{document}

\title[]{Key subgroups in topological groups} 

		\author[]{M. Megrelishvili}
		\address[M. Megrelishvili]
	{\hfill\break Department of Mathematics
		\hfill\break
		Bar-Ilan University, 52900 Ramat-Gan	
		\hfill\break
		Israel}
		\email{megereli@math.biu.ac.il}

\author[]{M. Shlossberg}
\address[M. Shlossberg]
{\hfill\break School of Computer Science
	\hfill\break Reichman University, 4610101 Herzliya 
	\hfill\break Israel}
\email{menachem.shlossberg@post.runi.ac.il}

\subjclass[2020]{20H20, 20G25, 54H11, 54H13}  

\keywords{co-compact, co-key subgroup, co-minimality, Heisenberg group, key subgroup, local field, matrix group, minimal topological group, upper unitriangular group}

 \date{2024,  
 	October 2}   
\thanks{This research was supported by a grant of the Israel Science Foundation (ISF 1194/19) and also by the Gelbart Research Institute at the Department of Mathematics, Bar-Ilan  University}

\begin{abstract}  	   
	We introduce two minimality properties of subgroups in topological groups. A subgroup $H$ is a \textit{key subgroup} (\textit{co-key subgroup}) of a topological group $G$ if there is no strictly coarser Hausdorff group topology on $G$ which induces on $H$ (resp., on the coset space $G/H$) the original topology. Every co-minimal subgroup is a key subgroup while the converse is not true. Every locally compact co-compact subgroup is a key subgroup (but not always co-minimal). Any relatively minimal subgroup is a co-key subgroup (but not vice versa). Extending some results from \cite{MEG95,DM10} concerning the generalized Heisenberg groups, we prove that the center ("corner" subgroup) of the upper unitriangular group $\mathrm{UT(n,K)}$, defined over a commutative topological unital ring $K$, is a key subgroup. Every "non-corner" 1-parameter subgroup $H$ of $\mathrm{UT(n,K)}$ is a co-key subgroup. We study injectivity property of the restriction map $$r_H \colon \mathcal{T}_{\downarrow}(G) \to \mathcal{T}_{\downarrow}(H), \ \sigma \mapsto \sigma|_H$$ and show that it is an isomorphism of sup-semilattices for every central co-minimal subgroup $H$, where $\mathcal{T}_{\downarrow}(G)$ is the semilattice of coarser Hausdorff group topologies on $G$.
\end{abstract}	

\maketitle
	
\setcounter{tocdepth}{1}
	\tableofcontents 

\section{Introduction} 

A Hausdorff topological group $G$ is \emph{minimal} \cite{S71,Doitch} if every continuous isomorphism $f \colon G\to H$, with $H$ a topological group, is a topological  isomorphism (equivalently, if $G$ does not admit a strictly coarser Hausdorff group topology). 

There are many papers demonstrating that minimal groups are important in topological algebra, analysis and geometry. See for example, the survey paper \cite{DM14}. Sometimes, a wider look is required 
in relaxing the definition of minimality.

Recall 
\cite{MEG04} (resp., \cite{DM10}) 
that a subgroup $H$ of $G$ is said to be \emph{relatively minimal} (resp., \emph{co-minimal}) in $G$ if every coarser Hausdorff group topology on $G$ induces on $H$ (resp., on the coset space $G/H:=\{gH: g \in G\}$) 
the original topology. 

\sk 

The following two definitions are natural generalizations 
(see Proposition \ref{p:co-minIsKey}) 
of co-minimality and relative minimality, respectively.  

\begin{definition} \label{d:key}
	Let $H$ be a subgroup of a topological group $(G,\gamma)$. 
	\begin{enumerate} 
		\item 	We say that $H$ is a 
		\textbf{key subgroup} of $G$ if for every coarser Hausdorff group topology $\gamma_1 \subseteq \gamma$ on $G$ the coincidence of the two topologies on $H$ (that is,  $\gamma_1|_H = \gamma|_H$) 
		 implies that  $\gamma_1 = \gamma$. 
	
		\item 
		We say that $H$ is a 
		\textbf{co-key subgroup} of $G$ if for every coarser Hausdorff group topology $\g_1 \subseteq \g$ on $G$ satisfying $\g_1/H=\g/H$ it holds  that  $\g_1 = \g$.
	\end{enumerate} 
\end{definition}

In the sequel, $\g/H$ denotes the quotient topology on $G/H$. 
The concept of key minimal subgroups appears implicitly in some earlier publications \cite{MEG95, DM10, MS-Fermat}.  
See also Theorem \ref{examples2}.1 below. 

	Observe that a topological group is minimal if and only if it contains a relatively minimal key subgroup 
if and only if it contains a co-minimal co-key subgroup.

By Proposition \ref{p:cocompact}, 
locally compact co-compact subgroups are key subgroups (e.g.,  $\Z$ in $\R$). 
We give several distinguishing examples 
which confirm that the class of key subgroups is 
much larger than the class of all co-minimal subgroups.  
See Examples 
\ref{ex:R/Z},  \ref{ex:mnosm}, \ref{ex:zinq} and \ref{p:oneMore}  
below.

It is more difficult  to find co-key subgroups $H$ which are 
not relatively minimal. By Proposition \ref{p:co-key-noNorm}, 
the subgroup $H$ cannot be \textit{normal}.
A closed subgroup $H$ of a locally compact \textbf{abelian} group $G$ is co-key
	if and only if $H$ is relatively minimal if and only if $H$ is compact. 
For some natural sources of distinguishing examples in this case see 
\ref{prop: knoco} and \ref{ex:mnosm}.


Recall that for every separated continuous 
biadditive  mapping $w \colon E \times F \to A$ 
(where $E, F$ and $A$ be abelian topological groups) we have 
the generalized Heisenberg group $
H(w)= (A \oplus E) \rtimes F
$ 
which intuitively can be represented as a generalized matrix group 
	$$ H(w):=
\left( \begin{array}{ccc}
	1 & F & A \\
	0 & 1 & E \\
	0 & 0 & 1 
\end{array}\right).  
$$
According to Theorem \ref{t:AisKey},  the center 
$A$ is a key subgroup of $H(w)$ 
if and only if $w$ is 
minimal (see Definition \ref{d:smin}.b) 
 if and only if  $E$ and $F$ are co-key subgroups in $H(w)$. 
Moreover, by Theorem \ref{t:AisCo-Min}, $A$ is a co-minimal subgroup of $H(w)$ if and only if $w$ is strongly minimal (Definition \ref{d:smin}.c) if and only if $E$ and $F$ are relatively minimal in $H(w)$.

One of the consequences of these results
 is an example of a discrete matrix group 
 with a key subgroup 
which is not co-minimal and with a co-key subgroup which is not relatively minimal. 
  (see Corollary \ref{ex:mnosm} for details). 

In \cite{MS-Fermat} we study 
 minimality properties of matrix groups over subfields of local fields. 
 In the present paper we obtain some results regarding key subgroups and co-minimality in matrix groups. In particular, we show that the center of the upper unitriangular group  $\UT(n,K)$ (that is, the corner subgroup
 $\UT^{n-2}(n,K)$), defined over a commutative topological  unital ring $K$, is a key subgroup (Theorem \ref{t:1}). If the multiplication map $m \colon K\times K\to K$ is strongly minimal (e.g., when $K$ is an archimedean absolute valued field or a  local field), then this center is even co-minimal in $\UT(n,K)$ (Theorem \ref{t:2}). 
	
	In Theorem \ref{t:co-1} 
	we prove that every non-corner "1-parameter" subgroup 
	is a co-key subgroup of $G=\UT(n,K)$. 
	
	\sk  
	\noindent \textbf{Notation}. 
	For a topological group $(G,\g)$ denote by $\mathcal{T}_{\downarrow}(G)$ 
	the sup-semilattice of all coarser Hausdorff group topologies $\t \subseteq \g$ on $G$.
	
	\sk 
	
	For every subgroup $H$ of $G$ we have the following  natural order preserving restriction map (which is onto for every central subgroup $H$): 
	$$r_H \colon \mathcal{T}_{\downarrow}(G) \to \mathcal{T}_{\downarrow}(H), \ \s \mapsto \s|_H.$$  
	In Section \ref{s:inj} we study injectivity of $r_H$. 
	We say that $H$ is an \textit{inj-key} subgroup in $G$ if $r_H$ is injective. 
	We show in Theorem \ref{p:inj-key=co-min for central}.2 that co-minimal subgroups are inj-key. Hence, 
	$r_H$ is an isomorphism of semilattices for central co-minimal subgroups.
By Theorem \ref{t:inj-key is co-comp in LCA} a subgroup of a discrete abelian group is co-minimal  if and only if it is
 unconditionally closed of finite index.  
	
	 The restriction on the center 
	 is an isomorphism in the following remarkable cases (see  
	 Example \ref{ex:inj-keyCorHeis} for additional instances) : 
	$$r_{\F} \colon \mathcal{T}_{\downarrow}(\UT (n,\F)) \to \mathcal{T}_{\downarrow}(\F)$$ 
	$$r_{\F^{\times}} \colon \mathcal{T}_{\downarrow}(\GL(n,\F)) \to \mathcal{T}_{\downarrow}(\F^{\times})$$

In 
Section \ref{s:counterexample} we describe an example of a subgroup $H$ of a topological group $G$ such that $H$ is inj-key but not co-minimal. For simplicity we do not include the proof. We are going to give 
the details in a separate article.  

	Below we pose 
	open Questions \ref{q:all nontr s are key}, \ref{q:no key s}, \ref{q:both}, \ref{q:center-key}, 
	 \ref{q:key}.  
	
	\vskip 0.3 cm
\noindent \textbf{Acknowledgment.} 
We are very grateful to the referee who suggested several important improvements. Among others, Proposition \ref{p:cocompact}.1 was inspired by 
his question. 
We also thank Vladimir Pestov for 
 proposing to examine topological groups with metrizable universal minimal flow (see Section \ref{s:counterexample}).

%

\sk 
\section{Some properties of key and co-key subgroups}   

Unless otherwise is stated, the topological groups in this paper are assumed to be Hausdorff.  
For a topological group $G$ denote by
 $\mathcal S(G), 
 \mathcal K(G) $ and  $\mathcal {CK}(G)$ the poset of all subgroups of $G$ and its subposets of all 
 key and co-key subgroups, respectively. 
It always holds that $G  \in \mathcal K(G)$ and $\{e\}\in \mathcal {CK}(G).$

We list here some properties of key subgroups,  where 
assertions 
(1)-(3) are straightforward. 


\begin{lemma} \label{examples1} Let $H_2 \leq H_1$ be subgroups of a topological group $G.$  
	\ben  
	
	\item 	
	$H_2 \in 
	 \mathcal {K}(G) \Rightarrow H_1 \in 
	 \mathcal {K}(G)$ (so, $\mathcal K(G)$ is up-closed in the poset $\mathcal S(G)$). 
			\item 	If $H_2\in \mathcal K(H_1)$  and $H_1\in \mathcal K(G)$, then $H_2\in \mathcal K(G).$
		This can be easily extended to every finite chain 
		$H_m \leq H_{m-1} \leq \cdots \leq H_1$ of subgroups in $G$. 
		
			\item 	
		$\mathcal K(G)$ contains a relatively minimal subgroup if and only if $G$ is minimal. This occurs precisely when $\mathcal K(G)= \mathcal S(G).$  
		\een
\end{lemma}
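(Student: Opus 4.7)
All three statements should follow directly from the definition of key subgroup together with the transitivity of subspace topologies, namely $\sigma|_{H_2} = (\sigma|_{H_1})|_{H_2}$ whenever $H_2 \leq H_1$. The plan is to treat the three parts in order, keeping that identity as the only real ingredient.

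For (1), I would take any coarser Hausdorff group topology $\gamma_1 \subseteq \gamma$ on $G$ that agrees with $\gamma$ on $H_1$, i.e., $\gamma_1|_{H_1} = \gamma|_{H_1}$. Restricting both sides to $H_2$ and using transitivity of the subspace topology yields $\gamma_1|_{H_2} = \gamma|_{H_2}$. The key property of $H_2$ then forces $\gamma_1 = \gamma$, which shows $H_1 \in \mathcal{K}(G)$.

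For (2), I would carry out a two-stage descent. Given $\gamma_1 \subseteq \gamma$ with $\gamma_1|_{H_2} = \gamma|_{H_2}$, I would first view $\gamma_1|_{H_1}$ as a coarser Hausdorff group topology on $(H_1, \gamma|_{H_1})$, and rewrite the hypothesis as $(\gamma_1|_{H_1})|_{H_2} = (\gamma|_{H_1})|_{H_2}$ via transitivity. Since $H_2 \in \mathcal{K}(H_1)$, this upgrades to $\gamma_1|_{H_1} = \gamma|_{H_1}$, and then since $H_1 \in \mathcal{K}(G)$, it upgrades further to $\gamma_1 = \gamma$. Iterating gives the extension to a finite chain $H_m \leq H_{m-1} \leq \cdots \leq H_1$.

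For (3), I would close a short cycle. If $G$ is minimal, then $\mathcal{T}_{\downarrow}(G) = \{\gamma\}$, so every subgroup is vacuously key, giving $\mathcal{K}(G) = \mathcal{S}(G)$; this in particular contains the trivial subgroup $\{e\}$, which is (vacuously) relatively minimal, so $\mathcal{K}(G)$ contains a relatively minimal subgroup. Conversely, if some relatively minimal subgroup $H$ lies in $\mathcal{K}(G)$, then for any $\gamma_1 \in \mathcal{T}_{\downarrow}(G)$ relative minimality gives $\gamma_1|_H = \gamma|_H$, and the key property then gives $\gamma_1 = \gamma$; hence $\mathcal{T}_{\downarrow}(G) = \{\gamma\}$ and $G$ is minimal. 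There is no genuine obstacle here; the only thing to be careful about is bookkeeping with the transitivity of restrictions, which is where each implication silently rests.
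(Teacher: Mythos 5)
Your proof is correct and is exactly the straightforward argument the paper has in mind (the paper omits the proof of these assertions, labelling (1)--(3) as straightforward). The one identity everything rests on, $\sigma|_{H_2}=(\sigma|_{H_1})|_{H_2}$, together with the observation that $\gamma_1|_{H_1}\in\mathcal{T}_{\downarrow}(H_1)$ whenever $\gamma_1\in\mathcal{T}_{\downarrow}(G)$, is used correctly throughout, and the three-way cycle in (3) cleanly establishes both stated equivalences.
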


Since the class of minimal groups is closed under taking 
closed central subgroups (see, for example, \cite[Proposition 7.2.5]{DPS89}) we immediately obtain: 

\begin{lemma}\label{lem:ceniskey}
	Let $G$ be a topological group. Then $G$ is minimal if and only if $Z(G)$ is a minimal key subgroup. 
\end{lemma}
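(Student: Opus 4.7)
The plan is to prove both implications directly from the definitions, leveraging the two facts the authors have already assembled just above the statement.

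For the forward direction, I would assume $G$ is minimal. Since in any Hausdorff group the center $Z(G)$ is closed (as the intersection of the centralizers of singletons), the cited result \cite[Proposition 7.2.5]{DPS89} applies and gives that $Z(G)$, equipped with the subspace topology, is a minimal topological group. Moreover, by Lemma \ref{examples1}(3) we have $\mathcal{K}(G) = \mathcal{S}(G)$ when $G$ is minimal, so in particular $Z(G) \in \mathcal{K}(G)$, i.e.\ $Z(G)$ is a key subgroup. This is the ``easy'' direction.

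For the reverse direction, suppose $Z(G)$ is a minimal topological group and a key subgroup of $(G,\gamma)$. Let $\gamma_1 \subseteq \gamma$ be any coarser Hausdorff group topology on $G$. Then $\gamma_1|_{Z(G)}$ is a Hausdorff group topology on $Z(G)$ (Hausdorffness is inherited from $\gamma_1$) that is coarser than the original topology $\gamma|_{Z(G)}$. Minimality of $Z(G)$ forces $\gamma_1|_{Z(G)} = \gamma|_{Z(G)}$, and then the key subgroup hypothesis on $Z(G)$ (Definition \ref{d:key}.1) immediately yields $\gamma_1 = \gamma$. Hence $G$ admits no strictly coarser Hausdorff group topology, which is exactly minimality of $G$.

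There is no real obstacle: the only point that deserves explicit mention is the automatic closedness of $Z(G)$ in a Hausdorff group, which is needed so that the black-box fact ``minimality passes to closed central subgroups'' applies to $Z(G)$ itself and not merely to its closure. The whole argument is essentially a two-line combination of the definitions with Lemma \ref{examples1}(3) and \cite[Proposition 7.2.5]{DPS89}.
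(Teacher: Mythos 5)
Your proof is correct and follows exactly the route the paper intends: the forward direction is the cited fact that minimality passes to closed central subgroups \cite[Proposition 7.2.5]{DPS89} together with the triviality that every subgroup of a minimal group is key (Lemma \ref{examples1}.3), and the reverse direction is the immediate combination of minimality of $Z(G)$ with the key-subgroup definition. The paper states the lemma as an immediate consequence without writing this out, so your version simply makes the same argument explicit.
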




The following useful result is known as \textit{Merson's Lemma} (\cite[Lemma 7.2.3]{DPS89} or \cite[Lemma 4.4]{DM14}). 	

\begin{fact} \label{firMer} (Merson's Lemma)
	Let $(G,\gamma)$ be a (not necessarily Hausdorff) topological group and $H$ be a subgroup of $G.$ If 
$\gamma_1\subseteq \gamma$ is a coarser group topology on $G$ 
	such that $\gamma_1|_{H}=\gamma|_{H}$ and $\gamma_1/H=\gamma/H$, then $\gamma_1=\gamma.$
\end{fact}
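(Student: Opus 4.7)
The plan is to prove Merson's Lemma by contradiction: assume that $\gamma_1 \subsetneq \gamma$ and derive a contradiction by using the two hypotheses to ``lift'' and then ``split'' a suitable $\gamma_1$-convergent net. Since $\gamma_1 \subseteq \gamma$ is given, only the reverse inclusion is at stake; if it fails, there is a $\gamma$-open $U \ni e$ that is not $\gamma_1$-open, and hence a net $x_\alpha \to e$ in $\gamma_1$ with $x_\alpha \notin U$ for all $\alpha$.

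I would then push this net through the quotient. Since the canonical projection $\pi \colon G \to G/H$ is always open for a group topology (cosets are translates of $H$), and since $\gamma_1/H = \gamma/H$, the $\gamma_1$-convergence $\pi(x_\alpha) \to eH$ is also convergence in $\gamma/H$. Next I would invoke the classical fact that an open continuous surjection $p \colon X \to Y$ enjoys the convergent-net lifting property: for every convergent net $y_\alpha \to y$ in $Y$ and every $x \in p^{-1}(y)$, some subnet $y_{\alpha(\beta)}$ lifts to a net $w_\beta \in p^{-1}(y_{\alpha(\beta)})$ with $w_\beta \to x$. Applying this to $\pi$ with the lift $e$ of $eH$ yields (after passing to the subnet and relabeling) representatives $w_\alpha \in x_\alpha H$ with $w_\alpha \to e$ in $\gamma$, and \emph{a fortiori} in $\gamma_1$.

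The final step is to set $h_\alpha := w_\alpha^{-1} x_\alpha \in H$ and chase convergences. Since $w_\alpha \to e$ and $x_\alpha \to e$ in $\gamma_1$, continuity of inversion and multiplication in $\gamma_1$ give $h_\alpha \to e$ in $\gamma_1$; because $h_\alpha \in H$, this is the same as $h_\alpha \to e$ in $\gamma_1|_H = \gamma|_H$, hence $h_\alpha \to e$ in $\gamma$. Combined with $w_\alpha \to e$ in $\gamma$, the $\gamma$-continuity of multiplication gives $x_\alpha = w_\alpha h_\alpha \to e$ in $\gamma$, contradicting $x_\alpha \notin U$.

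The heart of the argument is the lifting step: it converts the purely ``projective'' hypothesis $\gamma_1/H = \gamma/H$ into a concrete $\gamma$-convergent net of representatives $w_\alpha$, which can then be coupled with the ``restrictive'' hypothesis $\gamma_1|_H = \gamma|_H$ applied to the fibre residue $h_\alpha$. The only delicate point is to formulate the lifting property cleanly for general nets, with no countability assumption on $G$, so that the argument applies to an arbitrary topological group.
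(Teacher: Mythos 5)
Your argument is correct. Note first that the paper does not prove this statement at all: it is quoted as a Fact from the literature (\cite[Lemma 7.2.3]{DPS89}, \cite[Lemma 4.4]{DM14}), and the standard proof there is a direct neighbourhood-filter computation: given $U\in\gamma(e)$ one picks $V\in\gamma(e)$ with $VV\subseteq U$, uses $\gamma_1|_H=\gamma|_H$ to find a symmetric $W\in\gamma_1(e)$ with $WW\cap H\subseteq V$, uses the openness of $q$ together with $\gamma_1/H=\gamma/H$ to see that $(V\cap W)H$ is a $\gamma_1$-neighbourhood of $e$, and then checks that $W\cap (V\cap W)H\subseteq U$ by writing $g=vh$ and observing $h=v^{-1}g\in WW\cap H\subseteq V$. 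Your net-theoretic proof is a faithful ``soft'' translation of exactly this computation: the lifting of $\pi(x_\alpha)$ through the open quotient map plays the role of the factorization $g=vh$ with $v$ small in $\gamma$, and your fibre residue $h_\alpha=w_\alpha^{-1}x_\alpha$ is the element $h=v^{-1}g$ to which the hypothesis $\gamma_1|_H=\gamma|_H$ is applied. Both arguments work without any Hausdorff or countability assumptions; the filter version is shorter and self-contained, while yours isolates the reusable general principle (open continuous surjections admit convergent lifts of convergent nets after passing to a subnet), which you do state and which does hold. Two small points of hygiene: (i) at the outset you should ask for a $\gamma$-neighbourhood of $e$ that is \emph{not a $\gamma_1$-neighbourhood of $e$} (rather than ``not $\gamma_1$-open''), which exists by homogeneity of group topologies and is what actually yields the net $x_\alpha\to e$ in $\gamma_1$ with $x_\alpha\notin U$; (ii) the lifting lemma deserves the explicit directed-set construction (pairs $(\alpha,W)$ with $y_\alpha\in p(W)$) if this is to be written out in full, but that is routine.
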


\begin{corollary} \label{c:key s reform} 
	Let $H$ be a subgroup of a topological group $(G,\gamma)$. 
	\begin{enumerate}
		\item 
			The following conditions are equivalent:
		\begin{enumerate}
			\item $H$ is a key subgroup of $G$.  
			\item For every  
			$\gamma_1 \in \mathcal{T}_{\downarrow}(G)$ the condition $\gamma_1|_H=\gamma|_H$ implies the coincidence of quotient topologies $\gamma_1 /H =\gamma /H$.   
	\end{enumerate}
	
\item 
The following conditions are equivalent:   
	\begin{enumerate}   
	\item $H$ is a co-key subgroup of $G$.  
	\item For every 
	$\gamma_1 \in \mathcal{T}_{\downarrow}(G)$ the condition $\gamma_1/H=\gamma/H$ implies the coincidence of subspace topologies $\gamma_1|_H=\gamma|_H$.   
\end{enumerate}
	\end{enumerate}
\end{corollary}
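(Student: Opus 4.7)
The plan is to reduce both equivalences to a single one-line application of Merson's Lemma (Fact \ref{firMer}), which asserts that if $\gamma_1 \subseteq \gamma$ are group topologies on $G$ satisfying \emph{both} $\gamma_1|_H = \gamma|_H$ and $\gamma_1/H = \gamma/H$, then $\gamma_1 = \gamma$. In each of the two equivalences, one direction will be immediate from Definition \ref{d:key}, and the other will be a direct application of Merson's Lemma.

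For part (1), I would first establish (a) $\Rightarrow$ (b): assuming $H$ is a key subgroup and picking any $\gamma_1 \in \mathcal{T}_{\downarrow}(G)$ with $\gamma_1|_H = \gamma|_H$, the defining property of a key subgroup forces $\gamma_1 = \gamma$, whence $\gamma_1/H = \gamma/H$ trivially. For the converse (b) $\Rightarrow$ (a), I would start from an arbitrary $\gamma_1 \in \mathcal{T}_{\downarrow}(G)$ with $\gamma_1|_H = \gamma|_H$, invoke the hypothesis (b) to upgrade this to the quotient coincidence $\gamma_1/H = \gamma/H$, and then apply Merson's Lemma to conclude $\gamma_1 = \gamma$, confirming that $H$ is a key subgroup.

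Part (2) is the formal dual: interchanging the roles of the subspace topology $\gamma_1|_H$ and the quotient topology $\gamma_1/H$ throughout the argument above yields the claim, because Merson's Lemma is symmetric in its two hypotheses. I do not anticipate any genuine obstacle; the point of the corollary is precisely to repackage Merson's Lemma as a convenient bidirectional reformulation of Definition \ref{d:key}. The only small bookkeeping point is that every topology in $\mathcal{T}_{\downarrow}(G)$ is automatically Hausdorff by definition, so Merson's Lemma (which itself does not require Hausdorffness) applies with no extra hypothesis.
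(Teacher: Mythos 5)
Your proposal is correct and matches the paper's intent exactly: the corollary is stated immediately after Merson's Lemma precisely because one implication in each part is trivial from Definition \ref{d:key} and the other is a direct application of Merson's Lemma. Nothing further is needed.
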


Recall that by \cite[Lemma 3.5.3]{DM10} a dense subgroup is always co-minimal.
\begin{proposition} \label{p:co-minIsKey}  
	Let $G$ be a topological group.	Then 
	\begin{enumerate} 
		\item every co-minimal
		(e.g., dense) subgroup of $G$ is a key subgroup;  
		\item  every relatively minimal (e.g., compact) subgroup of $G$ is co-key. 
	\end{enumerate}

\end{proposition}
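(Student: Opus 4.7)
The plan is to derive both statements as direct consequences of Merson's Lemma (Fact \ref{firMer}), which links the three conditions $\gamma_1|_H = \gamma|_H$, $\gamma_1/H = \gamma/H$, and $\gamma_1 = \gamma$. In each case the relevant minimality hypothesis upgrades one of the first two conditions, and then Merson's Lemma closes the argument.

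For (1), I would fix a coarser Hausdorff group topology $\gamma_1 \subseteq \gamma$ on $G$ satisfying $\gamma_1|_H = \gamma|_H$ and verify the key-subgroup condition. By the co-minimality of $H$, the coarser topology $\gamma_1$ automatically induces the original topology on the coset space $G/H$, i.e.\ $\gamma_1/H = \gamma/H$. Now Merson's Lemma applies to the pair $\gamma_1 \subseteq \gamma$ and forces $\gamma_1 = \gamma$, exactly the key-subgroup conclusion. The parenthetical "dense" case is subsumed because of the cited fact from \cite{DM10} that dense subgroups are co-minimal.

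For (2), the argument is the exact dual. Fix $\gamma_1 \subseteq \gamma$ coarser Hausdorff with $\gamma_1/H = \gamma/H$. Relative minimality of $H$ says that every such $\gamma_1$ restricts to $\gamma|_H$ on $H$, i.e.\ $\gamma_1|_H = \gamma|_H$. Again Merson's Lemma yields $\gamma_1 = \gamma$, which is the co-key condition. The "compact" clause is immediate since a compact Hausdorff subgroup carries a unique compatible Hausdorff group topology, hence is relatively minimal.

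There is essentially no obstacle: once Merson's Lemma is available, both implications are two-line deductions, and the only subtlety is bookkeeping the direction of each minimality assumption (co-minimality upgrades the subspace condition into the quotient condition, relative minimality does the reverse). The result can be stated cleanly using the reformulation in Corollary \ref{c:key s reform}, but invoking Merson's Lemma directly is the most transparent presentation.
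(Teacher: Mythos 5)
Your proof is correct and is essentially the paper's own argument: the paper routes through Corollary \ref{c:key s reform}, which is itself just Merson's Lemma repackaged, while you invoke Merson's Lemma directly. The handling of the parenthetical "dense" and "compact" cases also matches the paper's intent.
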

\begin{proof} 
	(1) 
	Let $H$ be a co-minimal subgroup of $(G,\g)$ and assume that $\gamma_1|_{H}=\gamma|_{H}$, where 
	$\gamma_1 \in \mathcal{T}_{\downarrow}(G)$. Since $H$ is co-minimal in $G$, we have $\gamma_1/H=\gamma/H$. 
	Now, apply Corollary \ref{c:key s reform}.1. 
	
	\sk 
	(2) 
	Similar 
	proof using Corollary \ref{c:key s reform}.2 this time.  
\end{proof}

%

\begin{corollary} \label{c:clos} 
	$H\in \mathcal K(G)$ if and only if its closure 
	$cl(H)\in \mathcal K(G).$ 
\end{corollary}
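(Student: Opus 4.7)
The plan is to prove each implication separately. The forward direction $H \in \mathcal{K}(G) \Rightarrow cl(H) \in \mathcal{K}(G)$ is immediate from Lemma \ref{examples1}.1: since $H \subseteq cl(H)$ and $\mathcal{K}(G)$ is up-closed in $\mathcal{S}(G)$, the conclusion follows at once.

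For the converse, I would assume $cl(H) \in \mathcal{K}(G)$ and fix an arbitrary $\gamma_1 \in \mathcal{T}_{\downarrow}(G)$ with $\gamma_1|_H = \gamma|_H$; the goal is to derive $\gamma_1 = \gamma$. The natural strategy is to first upgrade the coincidence of topologies on $H$ to a coincidence on the subgroup $cl(H)$, after which the key property of $cl(H)$ yields $\gamma_1 = \gamma$ immediately.

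The substantive step, and what I expect to be the main obstacle, is the following general principle about topological groups: if $\tau_1 \subseteq \tau$ are Hausdorff group topologies on a topological group $L$ that agree on a $\tau$-dense subgroup $D$, then $\tau_1 = \tau$. I would establish it by a standard three-fold neighborhood argument. Given a $\tau$-open neighborhood $U$ of the identity, choose a symmetric $\tau$-open $V$ with $V\cdot V\cdot V \subseteq U$, so that $cl_\tau(V) \subseteq U$. Using $\tau_1|_D = \tau|_D$, produce a $\tau_1$-open neighborhood $W$ of $e$ with $W \cap D \subseteq V \cap D$, followed by a symmetric $\tau_1$-open $W'$ with $(W')^2 \subseteq W$. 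Since $W'$ is also $\tau$-open (as $\tau_1 \subseteq \tau$) and $D$ is $\tau$-dense in $L$, the trace $W' \cap D$ is $\tau$-dense in $W'$, whence $W' \subseteq cl_\tau(W' \cap D) \subseteq cl_\tau(V) \subseteq U$. Thus $U$ contains a $\tau_1$-neighborhood of $e$, and so $\tau \subseteq \tau_1$, forcing equality.

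Applying this principle to $L = cl(H)$, $D = H$, $\tau = \gamma|_{cl(H)}$, and $\tau_1 = \gamma_1|_{cl(H)}$---which is legitimate because $cl(H)$ is a subgroup, both restrictions are Hausdorff group topologies with $\tau_1 \subseteq \tau$, they agree on $H$ by hypothesis, and $H$ is $\tau$-dense in $cl(H)$ by construction---yields $\gamma_1|_{cl(H)} = \gamma|_{cl(H)}$. The assumption $cl(H) \in \mathcal{K}(G)$ then forces $\gamma_1 = \gamma$, completing the proof.
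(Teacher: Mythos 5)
Your proposal is correct, and its skeleton matches the paper's: the forward direction is exactly Lemma \ref{examples1}.1, and for the converse both arguments reduce to showing that $\gamma_1|_H=\gamma|_H$ forces $\gamma_1|_{cl(H)}=\gamma|_{cl(H)}$, after which the key property of $cl(H)$ finishes the job. The difference lies in how that upgrade across the closure is justified. The paper routes it through existing machinery: a dense subgroup is co-minimal in its closure by \cite[Lemma 3.5.3]{DM10}, co-minimal subgroups are key by Proposition \ref{p:co-minIsKey}.1, so $H\in\mathcal K(cl(H))$, and then the transitivity Lemma \ref{examples1}.2 gives $H\in\mathcal K(G)$. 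You instead prove the needed fact from scratch: two comparable group topologies agreeing on a subgroup dense for the finer one must coincide, via the standard symmetric-neighborhood argument with $cl_\tau(V)\subseteq VV\subseteq U$ and the trace $W\cap D$ being $\tau$-dense in the $\tau$-open set $W$. Your argument is sound (indeed, the intermediate $W'$ is superfluous --- $W$ itself already satisfies $W\subseteq cl_\tau(W\cap D)\subseteq cl_\tau(V)\subseteq U$, and Hausdorffness of $\tau_1$ is never used), and it has the virtue of being self-contained and of making explicit that what is really being proved is that dense subgroups are key; the paper's version is shorter and exposes the stronger structural fact that dense subgroups are even co-minimal.
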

\begin{proof}
	If $H\in \mathcal K(G)$, then $cl(H)\in \mathcal K(G)$ by Lemma \ref{examples1}.1. 
	
	 Conversely, let us assume that  $cl(H)\in \mathcal K(G).$ Being dense in $cl(H),$ the subgroup $H$ must be co-minimal in its closure, in view of  \cite[Lemma 3.5.3]{DM10}. By Proposition \ref{p:co-minIsKey}.1, $H\in \mathcal K(cl(H))$. Using Lemma \ref{examples1}.2 we deduce that $H\in \mathcal K(G).$
\end{proof}



\begin{fact} \label{r:RD} 
	\cite[page 102]{RD} 
Let $G\leq H$ be subgroups of a topological group $(X.\t)$.  
	\begin{enumerate}
		\item  
		Then we have the corresponding coset spaces $X/G$, $X/H$ and 
	the	canonical open quotient maps such that the following diagram is commutative 
		\begin{equation*}
			\xymatrix { X \ar[dr]_{q_H} \ar[r]^{q_G} & X/G
				\ar[d]^{p} \\
				&  X/H \ \ & }
		\end{equation*}
		where 
		$p \colon X/G \to X/H, \ p(xG)=xH$ is a 
		continuous open surjection;
		\item 
		\cite[Lemma 5.32]{RD} \ $(\t |_H)/G=(\t/G)|_{q_G(H)}$. 
	\end{enumerate}
\end{fact}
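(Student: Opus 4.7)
The plan is to verify both parts directly from the definitions of quotient and subspace topologies, with the assumption $G \leq H$ entering each argument in an essential way.

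For part (1), I would first check that $p$ is well defined: if $xG = yG$, then $x^{-1}y \in G \subseteq H$, so $xH = yH$. Surjectivity is clear, and the commutativity of the diagram $p \circ q_G = q_H$ is built into the definition. Continuity of $p$ follows at once from the universal property of the quotient map $q_G$, since the composition $q_H = p \circ q_G$ is continuous. For openness, if $U \subseteq X/G$ is open then $q_G^{-1}(U)$ is open in $X$; using surjectivity of $q_G$ together with $p \circ q_G = q_H$, one sees that $p(U) = q_H(q_G^{-1}(U))$, which is open in $X/H$ because the coset quotient map $q_H$ is always open for topological groups.

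For part (2), the natural candidate for the identification is the bijection $j \colon H/G \to q_G(H)$ sending the coset $hG$ (as an element of $H/G$) to the same coset viewed as a point of $X/G$. Writing $q \colon H \to H/G$ for the quotient map (with $H$ in the subspace topology) and $i \colon H \hookrightarrow X$ for the inclusion, the identity $q_G \circ i = j \circ q$ makes $j$ continuous via the universal property of the quotient $q$. So the real work lies in showing $j$ is an open map.

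The main obstacle is openness of $j$, and this is precisely where $G \leq H$ becomes decisive. Given an open $U \subseteq H/G$, write $q^{-1}(U) = W \cap H$ with $W \subseteq X$ open; then $j(U) = q_G(W \cap H)$. I would establish the key identity $q_G(W \cap H) = q_G(W) \cap q_G(H)$ as follows: for any $x \in WG \cap H$, write $x = wg$ with $w \in W$ and $g \in G \subseteq H$; then $w = xg^{-1} \in W \cap H$, giving $WG \cap H \subseteq (W \cap H)G$, and the reverse inclusion is immediate from $G \subseteq H$. Since $q^{-1}(U)$ is $G$-invariant, $W \cap H = (W \cap H)G = WG \cap H$, and hence $q_G(W \cap H) = q_G(WG) \cap q_G(H) = q_G(W) \cap q_G(H)$. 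Because $q_G$ is an open map, $q_G(W)$ is open in $X/G$, and therefore $j(U) = q_G(W) \cap q_G(H)$ is open in the subspace topology on $q_G(H)$, completing the argument.
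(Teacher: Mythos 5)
The paper offers no proof of this statement: it is quoted as a known Fact from Roelcke--Dierolf (\cite[page 102]{RD} and \cite[Lemma 5.32]{RD}), so there is no in-paper argument to compare yours against. Your blind reconstruction is correct and is essentially the standard argument one finds in that source: part (1) via the universal property of $q_G$ and the identity $p(U)=q_H(q_G^{-1}(U))$, and part (2) by exhibiting the canonical bijection $j\colon H/G\to q_G(H)$ as a continuous open map. The only place where you compress a step is the passage from $q_G(W\cap H)=q_G(WG\cap H)$ to $q_G(WG)\cap q_G(H)$: the inclusion $q_G(WG)\cap q_G(H)\subseteq q_G(WG\cap H)$ needs the observation that $WG$ is $G$-saturated, so that $wG=hG$ with $w\in WG$, $h\in H$ forces $h\in WG\cap H$. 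That is a one-line verification and does not affect the validity of the proof. Your identification of where the hypothesis $G\leq H$ enters --- namely in $(W\cap H)G=WG\cap H$, which fails for arbitrary subgroups $G$ not contained in $H$ --- is exactly the right point to emphasize.
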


%



\sk 
\begin{lemma} \label{l:LargerSubAlsoCoKey}  
	Let $G\leq H$ be subgroups of a topological group $(X.\t)$.  
	\begin{enumerate}
		\item  $H\in \mathcal{CK}(X) \Rightarrow G\in \mathcal{CK}(X)$  
		 (so, $\mathcal{CK(X)}$ is down-closed in the poset $\mathcal S(X)$). 
		\item  $G\in \mathcal{CK}(H) \Rightarrow G\in \mathcal{CK}(X).$
		\item 	$\mathcal {CK}(G)$ contains a co-minimal 
	(e.g., a dense) subgroup of $G$  if and only if $G$ is minimal. This occurs precisely when $\mathcal {CK}(G)= \mathcal S(G).$
		
	\end{enumerate} 
\end{lemma}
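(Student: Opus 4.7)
The three parts share a common toolkit: Merson's Lemma (Fact \ref{firMer}) and the naturality diagram of Fact \ref{r:RD}. For both (1) and (2), I would begin by fixing a coarser Hausdorff group topology $\tau_1 \subseteq \tau$ on $X$ with $\tau_1/G = \tau/G$ and try to upgrade this to $\tau_1 = \tau$. The first step, shared between the two parts, is to show that $\tau_1/H = \tau/H$. This I would do via the canonical open surjection $p \colon X/G \to X/H$, $p(xG) = xH$ from Fact \ref{r:RD}.1: by the universal property of the quotient topology applied to $q_H = p \circ q_G$, a set $U \subseteq X/H$ is open in $\tau/H$ iff $p^{-1}(U)$ is open in $\tau/G$, and the same equivalence holds for $\tau_1$; so the hypothesis $\tau_1/G = \tau/G$ forces the quotient topologies on $X/H$ to agree.

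For (1), the co-key property $H \in \mathcal{CK}(X)$ applied to the coincidence $\tau_1/H = \tau/H$ then forces $\tau_1 = \tau$, giving $G \in \mathcal{CK}(X)$. For (2), I need one more ingredient: Fact \ref{r:RD}.2 yields $(\tau_1|_H)/G = (\tau_1/G)|_{q_G(H)} = (\tau/G)|_{q_G(H)} = (\tau|_H)/G$, so the hypothesis $G \in \mathcal{CK}(H)$ upgrades this to $\tau_1|_H = \tau|_H$. Merson's Lemma, fed the two coincidences $\tau_1|_H = \tau|_H$ and $\tau_1/H = \tau/H$, closes the argument.

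For (3), the observation that drives everything is that $G \in \mathcal{CK}(G)$ is itself equivalent to the minimality of $G$: for $H = G$ the condition $\tau_1/G = \gamma/G$ is vacuous (the coset space is a singleton), so the co-key clause reduces to saying every coarser Hausdorff group topology on $G$ equals $\gamma$. Using this, both directions of (3) are immediate. If $G$ is minimal, then $\mathcal{T}_{\downarrow}(G) = \{\gamma\}$ and the co-key condition is vacuous for every subgroup, hence $\mathcal{CK}(G) = \mathcal{S}(G)$, which in particular contains the trivially co-minimal subgroup $G$ itself. Conversely, if $\mathcal{CK}(G)$ contains a co-minimal subgroup $H$, then for any $\tau_1 \in \mathcal{T}_{\downarrow}(G)$, co-minimality yields $\tau_1/H = \gamma/H$ and the co-key property then yields $\tau_1 = \gamma$; so $G$ is minimal, and by the first direction $\mathcal{CK}(G) = \mathcal{S}(G)$.

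The only (mild) obstacle throughout is notational: tracking which restriction and quotient operations commute, so that Fact \ref{r:RD}.2 applies cleanly in (2). No topological or algebraic input beyond Merson's Lemma and the diagram chase is required.
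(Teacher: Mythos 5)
Your proposal is correct and follows essentially the same route as the paper: parts (1) and (2) use exactly the same chain — Fact \ref{r:RD}.1 to pass from $\tau_1/G=\tau/G$ to $\tau_1/H=\tau/H$, then Fact \ref{r:RD}.2 plus the co-key hypothesis on $G\leq H$ to get $\tau_1|_H=\tau|_H$, closed off by Merson's Lemma. Part (3), which the paper leaves unproved, is handled correctly by your observation that minimality makes $\mathcal{T}_{\downarrow}(G)$ a singleton (so every subgroup is vacuously co-key) and that a co-minimal co-key subgroup forces minimality.
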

\begin{proof} (1) 
	Let $\s \in \mathcal{T}_{\downarrow}(X,\t)$ such that $\s / G = \t /G$. We have to show that $\s=\t$. 
	First of all observe that $\s /H= \t /H$. 
	Indeed, since $G$ is a subgroup of $H$, in virtue of Fact \ref{r:RD}.1, the quotient topology $\s/H$ is the same as the quotient topology on $X/H$ induced from the open map $p \colon (X/G, \s/G) \to (X/H,\s/H)$. Similarly the quotient topology $\t/H$ is the same as the quotient topology on $X/H$ induced from the open map $p \colon (X/G, \t/G) \to (X/H,\t/H)$. So, the equality $\s / G = \t /G$ ensures that $\s /H= \t /H$.
		If $H$ is a key-subgroup of $X$, then by definition $\s=\t$.

	(2) 
	Now, suppose that  $G\in \mathcal{CK}(H).$ Let 
	$\s \in \mathcal{T}_{\downarrow}(X,\t)$ such that $\s / G = \t /G$. We have to show that $\s=\t$. 
	As in the proof of (1) observe that $\s /H= \t /H$.

	  By Merson's Lemma it is enough to show that $\s|_H=\t|_H$.  
	Since $G$ is co-key in $H$, we have only to prove that $(\s|_H)/G=(\t|_H)/G$. 
	As $\s/G=\t/G$, then, in particular,   $(\t/G)|_{q(H)}=(\s/G)|_{q(H)},$ where $q:=q_G\colon X\to X/G$ is the canonical quotient.  By 
	Fact \ref{r:RD}.2, 
	it holds that $(\t|_H)/G=(\t/G)|_{q(H)}$ and $(\s|_H)/G=(\s/G)|_{q(H)}$. Therefore, $(\t|_H)/G=(\s|_H)/G$ and our proof is completed. 	
\end{proof} 

Let $\a \colon G \times X \to X$ be a continuous action of $G$ on $X$ by group automorphisms and $M:=X \rtimes_{\a} G$  be a topological semidirect  product. 
We have canonical embedding of topological groups $i_X \colon X \hookrightarrow M, x \mapsto (x,e_G)$ and $i_G \colon G \hookrightarrow M, g \mapsto (e_X,g)$, where $e_X$ and $e_G$ are the neutral elements in $X$ and $G$ respectively. 
We will often identify $X$ with the normal subgroup $X \times\{e_G\}$ of $M$ and $G$ with the subgroup $\{e_X\} \times G$. 

\begin{fact} \label{f:FinalTop} \cite[Prop. 6.15.b]{RD} 
	The topological semidirect product 
	$M:=X \rtimes_{\a} G$ carries the final (strongest) group topology with respect to the inclusion maps $i_X$ and $i_G$.
\end{fact}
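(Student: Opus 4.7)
The plan is to verify two assertions: (a) the semidirect product topology $\tau_M$ on $M=X\rtimes_\alpha G$ is itself a group topology for which $i_X$ and $i_G$ are continuous, so that a finest such topology exists and is at least as fine as $\tau_M$; and (b) any group topology $\tau'$ on the abstract group $M$ for which both $i_X$ and $i_G$ are continuous satisfies $\tau' \subseteq \tau_M$. Together these say that $\tau_M$ is the maximum element in the poset of group topologies making both inclusions continuous, which is exactly the assertion.

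Step (a) is essentially by construction. On the underlying set $X\times G$, the topology $\tau_M$ is the product topology, the inclusions $i_X, i_G$ are just the coordinate inclusions $x\mapsto (x,e_G)$ and $g\mapsto (e_X,g)$, and these are continuous and even topological embeddings. Continuity of multiplication and inversion in $\tau_M$ is the definition of topological semidirect product, using continuity of $\alpha$.

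Step (b) is the only real content. Fix a group topology $\tau'$ on $M$ making $i_X$ and $i_G$ continuous. By homogeneity, it suffices to show that every $\tau'$-neighborhood $V$ of the neutral element $e_M=(e_X,e_G)$ is a $\tau_M$-neighborhood of $e_M$. Using continuity of multiplication in $(M,\tau')$, choose a $\tau'$-neighborhood $W$ of $e_M$ with $W\cdot W\subseteq V$. Continuity of $i_X\colon X\to (M,\tau')$ and $i_G\colon G\to (M,\tau')$ at the identity yields open neighborhoods $U_X\subseteq X$ of $e_X$ and $U_G\subseteq G$ of $e_G$ with $i_X(U_X)\subseteq W$ and $i_G(U_G)\subseteq W$. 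For any $(x,g)\in U_X\times U_G$ we then have
$$ (x,g) \;=\; (x,e_G)\cdot (e_X,g) \;=\; i_X(x)\cdot i_G(g) \;\in\; W\cdot W\;\subseteq\; V, $$
so $U_X\times U_G \subseteq V$, exhibiting $V$ as a $\tau_M$-neighborhood of $e_M$. Hence $\tau'\subseteq \tau_M$.

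The only potential obstacle is to remember that $\tau'$ is only assumed to be a group topology; in particular we may not assume $\tau'$ is Hausdorff or comparable to $\tau_M$ a priori. This is handled entirely by the homogeneity argument above, which reduces everything to identity neighborhoods and uses nothing beyond joint continuity of multiplication in $\tau'$ plus continuity of the two inclusions at the identity. No hypothesis on $\alpha$ beyond its continuity (used only in step (a)) is needed for step (b), and the proof makes no use of the specific algebraic form of the semidirect product beyond the factorization $(x,g)=i_X(x)\cdot i_G(g)$.
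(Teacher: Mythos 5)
Your proof is correct. The paper does not prove this statement at all --- it is quoted as a Fact with a citation to Roelcke--Dierolf \cite[Prop.\ 6.15.b]{RD} --- and your argument is the standard one found there: the factorization $(x,g)=i_X(x)\cdot i_G(g)$ together with the reduction to identity neighborhoods shows any group topology making both inclusions continuous is coarser than the product topology, while the product topology itself qualifies by the definition of the topological semidirect product. The only point worth double-checking, which you handle correctly, is that the comparison $\tau'\subseteq\tau_M$ really does reduce to identity neighborhoods because translations are homeomorphisms for \emph{both} topologies, each being a group topology.
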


We have a continuous (open) retraction of topological groups $M \to G, (x,g) \mapsto g$ and a continuous (open) map $q_G \colon M \to M/G$, $(x,g) \mapsto (x,g)G$.  
Recall that its restriction  
$$
q_G|_X \colon X   \to M/G, \ \ x \mapsto xG
$$
is a homeomorphism (see \cite[Proposition 6.17.a]{RD}). 
Denote by $h$ the converse homeomorphism $h \colon M/G \to X$. 
Then $h(q_G(x))=x$ for every $x \in X$. 

Following \cite{MEG95} we say that the action $\a \colon G \times X \to X$ is \textit{t-exact} (that is, \textit{topologically exact}) if there is no strictly coarser 
(not necessarily Hausdorff) group topology on $G$ such that the action remains continuous. 
\begin{theorem} \label{examples2}  
 Let $\a \colon G \times X \to X$ be a continuous action of $G$ on $X$ by group automorphisms  and $M:=X \rtimes_{\a} G$  be a topological semidirect  product. If $X$ is abelian and the action $\a$ is t-exact, then 
 \begin{enumerate}
 	\item $X$ is a key subgroup of $X \rtimes_{\a} G$.  
 	\item $G$ is a co-key subgroup of $X \rtimes_{\a} G$.  
 \end{enumerate} 
\end{theorem}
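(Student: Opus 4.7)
The strategy is to prove (1) via Merson's Lemma (Fact \ref{firMer}) applied to the normal subgroup $X$, and then to deduce (2) from (1) using the homeomorphism $q_G|_X$ recalled just before the theorem. For (1), assume $\gamma_1 \subseteq \gamma$ is a coarser Hausdorff group topology with $\gamma_1|_X = \gamma|_X$; by Merson's Lemma it suffices to show $\gamma_1/X = \gamma/X$ on $M/X$. The crucial algebraic input is that, \emph{because $X$ is abelian}, internal conjugation of $X$ by an element $m = (y, g) \in M$ collapses to the external action: a direct computation in $X \rtimes_\alpha G$ yields
$$ (y, g)(x, e_G)(y, g)^{-1} = (\alpha(g)(x), e_G), $$
the abelianness of $X$ entering through $y + z - y = z$ with $z = \alpha(g)(x)$. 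Consequently the conjugation map $c \colon M \times X \to X$, $(m, x) \mapsto mxm^{-1}$, factors as $c = \alpha \circ (q \times \mathrm{id}_X)$ through the canonical quotient $q \colon M \to M/X$, using the identification $M/X \cong G$.

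Next I transfer continuity across this factorization. The map $c$ is $(\gamma_1 \times \gamma|_X) \to \gamma|_X$ continuous by the group-topology axioms together with $\gamma_1|_X = \gamma|_X$. The coset projection $q$ is always open and continuous, so $q \times \mathrm{id}_X$ is an open continuous surjection and hence a quotient map whose quotient topology on $(M/X) \times X$ coincides with the product $(\gamma_1/X) \times \gamma|_X$; this is precisely where the openness of $q$ is used. By the universal property of quotients, $\alpha \colon (G, \gamma_1/X) \times (X, \gamma|_X) \to (X, \gamma|_X)$ is continuous. Now $\gamma_1/X$ is a (possibly non-Hausdorff) group topology on $G$, coarser than $\gamma|_G$; since $\alpha$ is continuous for it, t-exactness of $\alpha$ rules out $\gamma_1/X \subsetneq \gamma|_G$, forcing $\gamma_1/X = \gamma/X$. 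Merson's Lemma then delivers $\gamma_1 = \gamma$.

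For (2), assume $\gamma_1/G = \gamma/G$. The continuous composition $(X, \gamma_1|_X) \hookrightarrow (M, \gamma_1) \xrightarrow{q_G} (M/G, \gamma_1/G) = (M/G, \gamma/G)$ becomes, under the recalled homeomorphism $q_G|_X \colon (X, \gamma|_X) \to (M/G, \gamma/G)$, the identity $(X, \gamma_1|_X) \to (X, \gamma|_X)$. This forces $\gamma|_X \subseteq \gamma_1|_X$; combined with $\gamma_1 \subseteq \gamma$ it gives $\gamma_1|_X = \gamma|_X$, after which (1) completes the argument. The main technical hurdle throughout is securing the joint continuity of $\alpha$ in the quotient topology $\gamma_1/X$, which rests on $q \times \mathrm{id}_X$ being a quotient map -- itself a consequence of the openness of coset projections in topological groups.
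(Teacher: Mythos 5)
Your proof is correct and follows essentially the same route as the paper's: part (1) reduces via Merson's Lemma to showing that the induced action of $(G,\gamma_1/X)$ on $(X,\gamma|_X)$ is continuous and then invokes t-exactness, and part (2) uses the homeomorphism $q_G|_X$ to pull $\gamma_1/G=\gamma/G$ back to $\gamma_1|_X=\gamma|_X$ and applies (1). The only difference is that you prove the continuity of the induced action directly (via the conjugation formula and the openness of $q\times\mathrm{id}_X$), where the paper cites this as a known result from \cite{MEG95}.
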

\begin{proof} 
	(1) 
	 Let $\gamma_1 \in \mathcal{T}_{\downarrow}(M,\g)$, where 
	  $M=X \rtimes_{\a} G$.  
	 By \cite[Prop. 2.6]{MEG95} 
	the following induced action is continuous
	$$
	(G,\gamma_1/X) \times (X, \gamma_1|X) \to (X, \gamma_1|X). 
	$$ 
	  Assume that $\g_1|_X=\g|_X$. Then by the t-exactness of the original action $\a$, the topology $\g_1/X$ on $G$ coincides with the given topology (which is $\g/X$). Therefore, by Merson's Lemma we conclude 
	  that $\g_1=\g$.  

	\sk 
	(2)  
	Now suppose that 
 $\gamma_1/G =\gamma/G$ on $X$. In order to prove that $G$ is a co-key subgroup of $M$ we have to show that $\gamma_1=\gamma$. 
	
	Since $X$ is a key subgroup of $M$ by 
	(1), it is enough to prove that $\gamma_1|X =\gamma|X$. 
	The quotient map $(M,\gamma_1) \to (M/G,\gamma_1/G)$ is trivially continuous. 
	Hence also the map $q_G \colon (M,\gamma_1) \to (M/G,\gamma/G)$  (
	as $\gamma_1/G =\gamma/G$). 
	
	Consider the homeomorphism $h \colon M/G \to X$ described 
	 just before this proposition. Then the restriction of the composition $h \circ q_G$ induces on the subspace $(X,\g_1|_X)$ the continuous identity map $id \colon (X,\g_1|_X) \to (X,\g|_X)$. Hence, $\gamma_1|X=\gamma|X$, as desired.      	
\end{proof}

Note that assertion (1) is an easy reformulation of \cite[Corollary 2.8]{MEG95} in terms of key subgroups. For a more general example with a t-exact system of actions and central retractions, see \cite[Proposition 2.7]{MEG95}. 


\sk 
A (closed) subgroup $H$ of a topological group $G$ is called \textit{co-compact} if the coset space $G/H$ is compact and Hausdorff. Co-compact subgroups play a major role especially in the theory of Lie groups and geometry (co-compact lattices among others).  

Recall 
 that a topological group $G$ is \textit{Raikov complete} \cite{Raikov} if and only if it is complete in its two-sided uniformity if and only if it is closed in any topological group containing $G$ as a subgroup. 
 For example, every locally compact group is Raikov complete. 
 
 \begin{proposition} \label{p:cocompact}  
 	Let $H$ be a 
 	subgroup of a topological group 
 	$(G,\g)$ such that 
 	$cl(H)$ is Raikov complete. 
 	%
 	
 	\begin{enumerate}
 		\item 
 		If $G/H$ is compact,  then $H \in \mathcal {K}(G)$. 
 		\item If $H$ is a normal subgroup of $G$ and the factor group $G/cl(H)$ is minimal, then $H \in \mathcal {K}(G)$.   
 	\end{enumerate} 
 	 \end{proposition}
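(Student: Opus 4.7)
My plan is to handle both parts uniformly via Merson's Lemma (Fact 2.5). Fix a coarser Hausdorff group topology $\gamma_1\subseteq\gamma$ on $G$ with $\gamma_1|_H=\gamma|_H$; we want $\gamma_1=\gamma$. Since $\gamma_1|_H=\gamma|_H$ already holds by assumption, Merson's Lemma reduces the problem to checking that the quotient topologies coincide: $\gamma_1/H=\gamma/H$.

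The first step, common to both (1) and (2), is to verify that $\gamma_1/H$ is a \emph{Hausdorff} group topology on $G/H$, i.e., that $H$ is closed in $(G,\gamma_1)$. This is exactly where Raikov completeness enters. The subspace topology $\gamma_1|_H$ on $H$ equals $\gamma|_H$, under which $H$ is Raikov complete. A Raikov complete subgroup is closed in every ambient topological group, so $H$ is $\gamma_1$-closed in $G$. Hence $(G/H,\gamma_1/H)$ is Hausdorff. (In (2), normality of $H$ guarantees moreover that $\gamma_1/H$ is a group topology on the group $G/H$; in (1) we just treat $G/H$ as a homogeneous $G$-space.)

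For part (1), we now have two comparable Hausdorff topologies $\gamma_1/H\subseteq\gamma/H$ on $G/H$, the larger one being compact. The continuous identity map $(G/H,\gamma/H)\to(G/H,\gamma_1/H)$ is then a continuous bijection from a compact space to a Hausdorff space, hence a homeomorphism. Therefore $\gamma_1/H=\gamma/H$, and Merson's Lemma finishes the proof.

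For part (2), $H$ is normal, so $G/H$ is a topological group under both $\gamma/H$ and $\gamma_1/H$; the identity map $(G/H,\gamma/H)\to (G/H,\gamma_1/H)$ is a continuous group isomorphism onto a Hausdorff topological group. Since $(G/H,\gamma/H)$ is minimal by hypothesis, this map is a topological isomorphism, so $\gamma_1/H=\gamma/H$, and again Merson's Lemma gives $\gamma_1=\gamma$. The only subtle point throughout is the Hausdorffness of the quotient $\gamma_1/H$, which is precisely what Raikov completeness delivers; the rest is a short reduction via Fact 2.5.
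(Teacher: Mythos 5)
Your proof is correct and follows essentially the same route as the paper's: use Raikov completeness to get $\gamma_1$-closedness of $H$ (hence Hausdorffness of $\gamma_1/H$), conclude $\gamma_1/H=\gamma/H$ by compactness in (1) (resp.\ minimality of $G/H$ in (2)), and finish with Merson's Lemma. The paper only sketches part (2) as ``similar,'' and your explicit minimality argument is exactly the intended one.
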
	 
 \begin{proof} (1)  Let $M:=cl(H)$. In view of Corollary \ref{c:clos}, it suffices to prove that   $M \in \mathcal {K}(G)$. As $G/H$ is compact and $M$ is a closed subgroup of $G$ containing $H$ it follows from Fact \ref{r:RD}.1 that $M$ is co-compact.
 Let $\g_1 \in \mathcal{T}_{\downarrow}(G,\g)$ such that 
  $\gamma_1|_M=\gamma|_M$. Since $M$ is a co-compact subgroup of $G$ the coset $G$-space $G/M$ is compact with respect to the quotient topology $\gamma/M$. Clearly, $\gamma_1 /M \subseteq \gamma /M$ is also compact. Moreover,  $\gamma_1 /M$ is Hausdorff because $M$, being Raikov complete in its topology $\gamma_1|_M=\gamma|_M$,
  is also $\gamma_1$-closed in $G$. Therefore, $(G/M, \gamma_1/M)$ is also Hausdorff. It follows that $\gamma_1/M=\gamma/M$. Now, by Merson's Lemma (Fact \ref{firMer}) one conclude that $\gamma_1 =\gamma$.   
  
  (2) The proof is similar to the case of (1). 
 \end{proof}


For  abelian groups $G$ Proposition \ref{p:cocompact}  can be found (up to reformulations) in \cite[Theorem 3.21]{HPTX19}. 

\begin{example} \label{ex:R/Z} \ 
	The subgroup $\Z^n$ is a key subgroup of $\R^n$ by Proposition \ref{p:cocompact}.1 (for every $n \in \N$). 
		However, we will show that $\Z^n$ is not co-minimal in $\R^n$ as it follows from Example \ref{r:super-rem}.1 below. 
		%
\end{example}
By results of B. Kadri \cite{Kadri} 
a non-compact locally compact group 
every non-trivial subgroup of which is co-compact 
is isomorphic either to $\Z$ or to $\R$. Since every locally compact co-compact subgroup is a key subgroup (Proposition \ref{p:cocompact}.1) the following 
question is very natural. 
\begin{question} \label{q:all nontr s are key} 
	Study the non-compact locally compact groups $G$
	every non-trivial subgroup of which is a key (co-minimal) subgroup. What if $G$ 
	 is abelian?
\end{question}
 \begin{proposition} \label{p:allKey-in-R}
 Any non-trivial subgroup of $\R$ is a key subgroup. 
 \end{proposition}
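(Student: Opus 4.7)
The plan is to split into cases based on the well-known dichotomy that any subgroup of $\mathbb{R}$ is either discrete (hence cyclic) or dense. Both cases will be dispatched by applying results already established in the paper, so no genuinely new argument is needed.

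First I would recall that if $H$ is a non-trivial subgroup of $\mathbb{R}$, then either $H$ is discrete in $\mathbb{R}$, in which case the classical structure theorem gives $H = a\mathbb{Z}$ for some $a>0$, or $H$ is dense in $\mathbb{R}$. These are the only possibilities (the closure of $H$ is a closed subgroup of $\mathbb{R}$, and closed subgroups of $\mathbb{R}$ are $\{0\}$, $a\mathbb{Z}$, or $\mathbb{R}$; since $H$ is non-trivial, the first is excluded).

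In the dense case, \cite[Lemma 3.5.3]{DM10} (recalled just before Proposition \ref{p:co-minIsKey}) tells us that $H$ is co-minimal in $\mathbb{R}$, and then Proposition \ref{p:co-minIsKey}.1 immediately gives that $H \in \mathcal{K}(\mathbb{R})$. Alternatively one could invoke Corollary \ref{c:clos}: since $cl(H) = \mathbb{R} \in \mathcal{K}(\mathbb{R})$ trivially, we conclude $H \in \mathcal{K}(\mathbb{R})$.

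In the discrete case, $H = a\mathbb{Z}$ is (topologically isomorphic to) $\mathbb{Z}$, so it is locally compact (in particular Raikov complete) and the coset space $\mathbb{R}/a\mathbb{Z}$ is homeomorphic to the circle $\mathbb{T}$, hence compact Hausdorff. Thus $H$ is a Raikov complete co-compact subgroup of $\mathbb{R}$, and Proposition \ref{p:cocompact}.1 yields $H \in \mathcal{K}(\mathbb{R})$. Since these two cases exhaust all non-trivial subgroups, the proof is complete. There is no real obstacle here; the only point worth emphasizing is the clean division into the dense and the discrete cases, each of which is handled by a different previously established tool (co-minimality of dense subgroups vs.\ key property of locally compact co-compact subgroups).
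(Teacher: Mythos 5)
Your proof is correct and follows essentially the same route as the paper: the same dichotomy between the cyclic case (handled via Proposition \ref{p:cocompact}.1) and the dense case (handled via Corollary \ref{c:clos}, or equivalently via co-minimality of dense subgroups and Proposition \ref{p:co-minIsKey}.1). Nothing further is needed.
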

\begin{proof}
Let $G$ be a non-trivial subgroup of $\R$. If $G=x\Z$ for some $x\in \R$, then $G$ is a key  subgroup by Proposition \ref{p:cocompact}.1. Otherwise, it is known that $G$ must be dense in $\R$. So, it is a key subgroup by 
Corollary \ref{c:clos}.  
\end{proof}
 Another extreme case is a topological group $G$ without any proper key subgroups 
 (i.e, $\mathcal K(G)=\{G\}$). 
 The following question was inspired by the referee. 
 \begin{question} \label{q:no key s} 
 		Study the locally compact groups $G$ such that any key (co-minimal) subgroup is necessarily $G$. What if $G$ 
 		 is abelian ?
 \end{question} 
Recall that a \textit{topologizable}  group  is an abstract group admitting a non-discrete group topology. 
It is equivalent to say that this group  is not minimal when equipped with the discrete topology. 
By \cite[Theorem 1.4]{KOO}, for every sufficiently large odd $n\in \N$ there exists a topologizable 
Tarski 
Monster (i.e., an infinite simple group with all proper subgroups finite cyclic) of exponent $n$. By \cite{DM10},  the    quasicyclic Pr\" ufer  $p$-groups $\mathbb{Z}(p^\infty)$ admit no minimal group topologies. 

Let $G\in\{\mathbb{Z}(p^\infty), M\}$, where  $M$ is a topologizable Tarski Monster   endowed with the discrete topology and $\mathbb{Z}(p^\infty)$ is equipped with any group topology.    
 Then $G$ is not minimal and has all proper subgroups finite. It follows that $\mathcal K(G)=\{G\}$ 
 while $\mathcal{CK}(G)=\mathcal S(G)\setminus\{G\}.$
 

%

 \begin{proposition} \label{p:co-key-noNorm} 
 	Let $(G,\t)$ be a topological group and $H$ be its \textbf{normal} (not necessarily, closed) subgroup. 
 	The following conditions are equivalent:
 	\begin{enumerate}
 		\item $H$ is co-key in $G$;  
 		\item $H$ is relatively minimal in $G$.
 	\end{enumerate}
 \end{proposition}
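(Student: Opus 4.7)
The equivalence has one easy direction and one nontrivial one. The implication $(2)\Rightarrow(1)$ is precisely Proposition~\ref{p:co-minIsKey}.2 and requires no normality of $H$; the content of the proposition is the reverse direction.

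For $(1)\Rightarrow(2)$, fix an arbitrary $\t_1\in\mathcal{T}_{\downarrow}(G,\t)$; the goal is to prove $\t_1|_H=\t|_H$. The strategy is to build an intermediate topology $\t_2$ with $\t_1\subseteq\t_2\subseteq\t$ satisfying \emph{two} properties simultaneously: $\t_2/H=\t/H$ (so that the co-key hypothesis forces $\t_2=\t$) and $\t_2|_H=\t_1|_H$ (so that $\t_2=\t$ immediately yields the desired equality on $H$).

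The construction is $\t_2:=\t_1\vee \pi^{-1}(\t/H)$, where $\pi\colon G\to G/H$ is the quotient homomorphism (well-defined, since $H$ is normal). As a supremum of Hausdorff group topologies, $\t_2$ is itself a Hausdorff group topology; and both joinands are contained in $\t$, so $\t_2\in \mathcal{T}_{\downarrow}(G,\t)$. For $\t_2/H=\t/H$: the inclusion $\t_2\supseteq \pi^{-1}(\t/H)$ makes $\pi\colon (G,\t_2)\to (G/H,\t/H)$ continuous, giving $\t_2/H\supseteq \t/H$, while $\t_2\subseteq \t$ provides the reverse inclusion. The co-key hypothesis then delivers $\t_2=\t$.

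To identify $\t_2|_H$, the crucial observation is that $\pi^{-1}(\t/H)|_H$ is the indiscrete topology on $H$: for every open $U\subseteq G/H$, the intersection $\pi^{-1}(U)\cap H$ is either $H$ (when $U$ contains the identity coset) or $\emptyset$. Passing the subspace topology through the sup therefore yields $\t_2|_H=\t_1|_H$, and combining with $\t_2=\t$ gives $\t|_H=\t_1|_H$, as required. The main conceptual point is spotting this particular enlargement: joining $\t_1$ with $\pi^{-1}(\t/H)$ is engineered to saturate the quotient direction while contributing nothing to the trace on $H$. Beyond that, every verification (Hausdorffness of the sup, compatibility with sub- and quotient topologies, triviality of the indiscrete contribution on $H$) is formal, and Merson's Lemma is not even needed here since the co-key hypothesis is applied directly.
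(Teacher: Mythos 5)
Your proof is correct and uses essentially the same construction as the paper: the paper also forms the supremum of the given coarser topology with the preimage topology $q^{-1}(\t/H)$, only phrased contrapositively, and your explicit observation that $\pi^{-1}(\t/H)$ traces the indiscrete topology on $H$ is exactly what justifies the paper's ``it is easy to see that $\nu|_H=\s|_H$'' step. (One cosmetic point: $\pi^{-1}(\t/H)$ itself is not Hausdorff, so $\t_2$ is Hausdorff not as a sup of Hausdorff topologies but because it refines the Hausdorff topology $\t_1$.)
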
 
 \begin{proof}
 	(1) $\Rightarrow$ (2): Since $H$ is a normal subgroup, the quotient topological group $(G/H,\t/H)$ is well defined. 
 	Denote by $q^{-1}(\t/H)$ the preimage group topology on $G$ induced by the quotient $q \colon G \to (G/H,\t/H)$.
 	Assuming the contrary, let $H$ be not relatively minimal in $G$.    
 	Then there exists $\s \in \mathcal{T}_{\downarrow}(G,\t)$  
 	 such that
 	$\s|_H$ is strictly coarser than the original topology $\t|_H$ on $H$.
 	Consider the supremum $\nu:=\sup\{\s,q^{-1}(\t/H)\}$ of group topologies. Then
 	 $\s \subseteq \nu \subseteq \t$ 
 	  and $\nu \in \mathcal{T}_{\downarrow}(G,\t)$. 
 	Also, $q^{-1}(\t/H) / H = \t/H$. Since  $q^{-1}(\t/H) \subseteq \nu \subseteq \t$ we have 
 	$\nu/H=\t/H$ on $G/H$. It is easy to see that  $\nu|_H=\s|_H \neq \t|_H$. 
 	This implies that $\nu \neq \t$. Therefore, $H$ is not a co-key subgroup in $G$.  
 	
 	(2) $\Rightarrow$ (1) Is just Proposition \ref{p:co-minIsKey}.2.
 \end{proof}
 
 \begin{corollary} \label{c:both} \
 	\begin{enumerate}
 		\item 	A topological group is minimal if and only if it contains a normal subgroup which is key and co-key at the same time. 
 		\item 	A non-minimal  abelian topological group does not contain a subgroup which is key and co-key at the same time. 
 		 \item 
 		A closed subgroup $H$ of a (locally compact) \textbf{abelian} group $G$ is co-key
 		if and only if $H$ is relatively minimal if and only if $H$ is (compact) minimal. 
 	\end{enumerate} 
 \end{corollary}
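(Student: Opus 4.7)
The plan splits according to the three parts of the corollary.

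For part (1), the forward direction is immediate with the choice $H = G$: the group $G$ is trivially a normal subgroup of itself, a key subgroup (the coincidence $\gamma_1|_G = \gamma|_G$ for $\gamma_1 \in \mathcal{T}_{\downarrow}(G)$ is just $\gamma_1 = \gamma$ tautologically), and a co-key subgroup precisely because $G$ is minimal (the condition $\gamma_1/G = \gamma/G$ holds vacuously on the singleton coset space, so minimality of $G$ delivers $\gamma_1 = \gamma$). For the converse, suppose $G$ contains a normal subgroup $H$ which is both key and co-key. Proposition~\ref{p:co-key-noNorm} then applies: normal plus co-key yields relatively minimal. Hence $\mathcal{K}(G)$ contains a relatively minimal subgroup, and Lemma~\ref{examples1}.3 forces $G$ to be minimal.

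Part (2) is then an immediate consequence of (1), since every subgroup of an abelian group is automatically normal: a subgroup $H \leq G$ that is simultaneously key and co-key qualifies for the hypothesis of (1), so $G$ must be minimal. Contrapositively, a non-minimal abelian group cannot harbor such an $H$.

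For part (3), abelianness of $G$ makes $H$ normal, so Proposition~\ref{p:co-key-noNorm} directly yields the equivalence ``$H$ co-key $\Longleftrightarrow$ $H$ relatively minimal.'' The implication ``$H$ minimal $\Longrightarrow$ $H$ relatively minimal'' is immediate: for any $\gamma_1 \in \mathcal{T}_{\downarrow}(G,\gamma)$ the restriction $\gamma_1|_H$ is a coarser Hausdorff group topology on $H$, hence equals $\gamma|_H$ by minimality of $H$. The converse ``$H$ relatively minimal $\Longrightarrow$ $H$ minimal'' is shown by contraposition: given a strictly coarser Hausdorff group topology $\sigma$ on $H$, one constructs a strictly coarser Hausdorff group topology $\tau$ on $G$ with $\tau|_H = \sigma$, contradicting relative minimality. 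The abelian structure is the crucial ingredient for this extension. Finally, the parenthetical locally compact strengthening ``minimal $\leftrightarrow$ compact'' follows from the Prodanov--Stoyanov theorem: a minimal LCA group is precompact and hence, being locally compact, compact; and every closed subgroup of an LCA group is itself LCA.

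The main obstacle is the extension step in (3): producing a coarser Hausdorff group topology on $G$ restricting to a prescribed coarser topology $\sigma$ on the closed subgroup $H$. In the LCA setting this can be handled through character-theoretic tools (characters of closed subgroups of LCA groups extend to $G$, so the Bohr-type topology associated to $(H,\sigma)$ lifts to $G$); in the general abelian case one must combine $\sigma$ on $H$ with the quotient topology on $G/H$ by an explicit neighborhood-filter construction that exploits commutativity. Everything else reduces mechanically to the earlier results of the section.
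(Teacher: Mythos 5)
Your proposal is correct and follows essentially the same route as the paper: parts (1)--(2) reduce to Proposition~\ref{p:co-key-noNorm} together with Lemma~\ref{examples1}.3, and part (3) combines Proposition~\ref{p:co-key-noNorm} with the equivalence ``relatively minimal $\Leftrightarrow$ minimal'' for closed central subgroups (Fact~\ref{f:strCl-DM}.2, which the paper notes follows from the Extension Lemma~\ref{l:ExtLem}.1 --- exactly the neighborhood-filter extension you sketch) and Prodanov--Stoyanov for the compact case. The paper's proof is just a citation of Fact~\ref{f:strCl-DM}.2; you have filled in the same argument in detail.
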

 
For (3) we use 
Fact \ref{f:strCl-DM}.2. 
 In particular, every non-trivial closed subgroup of $\R$ fails to be co-key (being, in contrast, 
 a key subgroup by Proposition \ref{p:allKey-in-R}).  The same is true for every non-trivial 
 subgroup of $\Z$. 

 It is unclear if the normality assumption can be removed from Corollary \ref{c:both}.1.  We leave open the following question.
 \begin{question} \label{q:both} 
 	Let $G$ be a topological group such that $\mathcal K(G) \cap \mathcal {CK}(G)\neq \emptyset.$ Must $G$ be minimal?
 \end{question} 
 
\section{Generalized Heisenberg groups} \label{s:GH}

%
We recall a natural generalization of the Heisenberg group (see, for example, \cite{MEG95,DM10,DM14}) which is based on biadditive mappings.  Let $E,F,A$ be abelian groups. A map $w: E \times F \to A$ is said to be {\it biadditive} if the induced mappings
$$
w_x: F \to A, w_f: E \to A, \ \  w_x(f):=w(x,f)=:w_f(x)
$$
are homomorphisms for all $x \in E$ and $f \in F$. We say that $w$ is {\it separated} if the induced homomorphisms separate points.
That is, for every non-zero $x_0 \in E, f_0 \in F$ there exist $f \in F, x \in E$ such that $f(x_0) \neq 0_A, f_0(x) \neq 0_A$, where $0_A$ is the zero element of $A$.
\begin{definition} \label{d:H(w)}
	{\em  Let $E, F$ and $A$ be abelian topological groups and $w \colon E \times F \to A$
		be a continuous biadditive mapping.
		Define the induced action of $F$ on $A \times E$ by
		$$w^{\nabla}: F \times (A \times E) \to (A \times E), \ \ \ w^{\nabla}(f,(a,x)) =(a+f(x),x).$$
In this way, $F$ acts by group automorphisms on $A \times E$.  
Denote by
$$
H(w)= (A \oplus E) \rtimes F
$$
the corresponding semidirect product of $F$ and the group $A \times E$. The resulting group, as a topological space, is the product $A \times E \times F $.
This product topology will be denoted by $\gamma$. The group operation is defined by the following rule: for a pair
$$
u_1=(a_1,x_1,f_1), \hskip 0.4cm u_2=(a_2,x_2,f_2)
$$
define
$$u_1 \cdot u_2 = (a_1+a_2+f_1(x_2), x_1+x_2,
f_1 +f_2)$$ where, $f_1(x_2)=w(x_2,f_1)$.}
\end{definition}
If  $w$ is separated,  then $H(w)$ becomes a two-step nilpotent topological group and $Z(H(w))=A$. We call $H(w)$ the {\it generalized Heisenberg group} induced by $w$.
Intuitively we can describe this group in the matrix form
	$$ H(w):=
\left( \begin{array}{ccc}
	1 & F & A \\
	0 & 1 & E \\
	0 & 0 & 1 
\end{array}\right). 
$$

Elementary computations for the commutator $[u_1,u_2]$ give
$$
[u_1,u_2] = u_1u_2u_1^{-1}u_2^{-1}= (f_1(x_2)-f_2(x_1),0_E,0_F).
$$

Often we will identify $E$ with $\{0_A \} \times E \times \{0_F \}$, $F$ with $\{0_A\} \times \{0_E\} \times F$ and $E \times F$ with $\{0_A\} \times E \times F$ .

In the case of a normed space $X$ and the canonical bilinear function
$w: X \times X^* \to \R$ we write $H(X)$ instead of
$H(w)$. Clearly, the case of $H(\R^n)$ (induced by the scalar product $w: \R^n \times \R^n \to \R$) gives the classical
2n+1-dimensional Heisenberg group.

\begin{definition} \label{d:smin}  
Let $(E,\s)$, $(F, \tau)$, $(A,\nu)$ be abelian  groups such that the separated biadditive mapping
		$$
		w \colon (E,\s) \times (F,\tau) \to (A,\nu) \eqno(*)
		$$
		is continuous.
		\begin{itemize} 
			\item[(a)] A triple $(\s_1,\t_1,\nu_1)$ of coarser Hausdorff group topologies 
			$\s_1 \subseteq\s$, $\t_1 \subseteq \t$, $\nu_1 \subseteq \nu$ on $E, F$ and
			$A$, respectively, is called  {\em compatible}, if
			$$
			w: (E,\s_1) \times (F,\t_1) \to (A,\nu_1)
			$$
			is continuous.
			\item[(b)] \cite{MEG95} 
			 We say that the biadditive mapping (*) is {\it minimal} if $\s_1=\s, \t_1=\t$
			holds for every compatible triple $(\s_1,\t_1, \nu)$ (with $\nu_1:=\nu$).

			\item[(c)]   \cite{DM10} 
			 The biadditive mapping (*) is called {\em strongly minimal}
			if for every compatible triple $(\s_1,\t_1,\nu_1)$ it follows that $\s_1=\s, \t_1=\t$.
	\end{itemize} 
\end{definition}

\begin{remark} 
	The definition of a compatible triple $(\s_1,\tau_1,\nu_1)$  does not  require that $\s_1 $ and $\t_1 $ are Hausdorff. 
	Indeed,   since $w$ is separated and $\nu_1$
	is Hausdorff it follows that  $\sigma_1$ and
	$\tau_1$ are automatically Hausdorff.
\end{remark}


\begin{lemma} \label{l:min-map} \ 
	\ben
	\item Every strongly minimal map is minimal.
	\item Every compatible triple $(\s_1,\t_1,\nu_1)$ of topologies gives
	rise to the corresponding (product) topology $\g_1$ on the
	Heisenberg group $H(w)=(A \times E) \rtimes F$ which is a coarser
	Hausdorff group topology (that is, $\g_1  \in \mathcal{T}_{\downarrow}(G,\g)$).
	\item If the mapping $w$ is minimal and the group $A$ is minimal, then $w$ is strongly minimal.
	\een
\end{lemma}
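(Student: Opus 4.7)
The plan is to handle the three assertions separately; none should require significant ingenuity once the definitions are unpacked carefully, though part (2) involves a routine continuity check that is the most involved step.

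For (1), I would simply chase the definitions. Strong minimality asserts that $\sigma_1=\sigma$ and $\tau_1=\tau$ hold for every compatible triple $(\sigma_1,\tau_1,\nu_1)$. Minimality asserts the same conclusion only in the restricted case $\nu_1=\nu$. Since a triple with $\nu_1=\nu$ is a particular compatible triple, the implication is immediate.

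For (2), I would take $\gamma_1$ to be the product topology $\sigma_1\times\tau_1\times\nu_1$ (in the appropriate order) on the underlying set $A\times E\times F$ of $H(w)$ and verify the three required properties. Hausdorffness is inherited factor-by-factor, and the inclusion $\gamma_1\subseteq\gamma$ follows from $\sigma_1\subseteq\sigma$, $\tau_1\subseteq\tau$, and $\nu_1\subseteq\nu$. The point that actually uses compatibility is continuity of the group operations: the group law
\[
u_1\cdot u_2 = (a_1+a_2+f_1(x_2),\, x_1+x_2,\, f_1+f_2)
\]
has an additive part that is continuous because $\sigma_1,\tau_1,\nu_1$ are group topologies, and a cross term $f_1(x_2)=w(x_2,f_1)$ whose continuity as a map $(E,\sigma_1)\times(F,\tau_1)\to(A,\nu_1)$ is precisely what compatibility of the triple guarantees. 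Continuity of inversion, $u^{-1}=(-a+f(x),-x,-f)$, is handled in exactly the same way. This is the one step with real content, but it is routine.

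For (3), I would argue as follows. Let $(\sigma_1,\tau_1,\nu_1)$ be any compatible triple for $w$. Then $\nu_1\subseteq\nu$ is a coarser Hausdorff group topology on $A$. Since $A$ is minimal, $\nu_1=\nu$. Therefore $(\sigma_1,\tau_1,\nu)$ is a compatible triple of the special form for which the minimality hypothesis on $w$ applies, yielding $\sigma_1=\sigma$ and $\tau_1=\tau$. By Definition \ref{d:smin}(c), $w$ is strongly minimal. The only potential subtlety is the implicit observation that the definition of ``compatible triple'' does not require $\sigma_1,\tau_1$ to be Hausdorff a priori, but the remark preceding the lemma already handles that: separatedness of $w$ together with Hausdorffness of $\nu_1$ forces $\sigma_1,\tau_1$ to be Hausdorff automatically, so nothing is lost in quoting minimality of $w$.
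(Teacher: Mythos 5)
Your proof is correct and is exactly the routine definition-chasing argument the paper has in mind (the lemma is stated without proof there, the key point in (2) being that compatibility of the triple is precisely the continuity needed for the cross term $f_1(x_2)=w(x_2,f_1)$ in the group law and in inversion, and in (3) that minimality of $A$ forces $\nu_1=\nu$ so that minimality of $w$ applies). Your remark that separatedness of $w$ plus Hausdorffness of $\nu_1$ automatically makes $\sigma_1,\tau_1$ Hausdorff correctly matches the paper's own Remark following Definition \ref{d:smin}.
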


 We  now provide some examples of (strongly) minimal biadditive maps.

\begin{example} \label{p:s-m-bi} \  \cite{MEG95, SH, DM14} 
	\ben
	\item The canonical biadditive mapping 
	 $G \times G^* \to \T,
	\hskip 0.2cm (g,\chi) \mapsto \chi(g)$ is strongly minimal for every locally compact abelian group G, where $G^*$ is the Pontryagin dual of $G$. 
	\item The canonical bilinear mapping $V\times V^* \to \R, \hskip 0.2cm (v,f) \mapsto f(v)$
	is strongly minimal for all normed spaces $V$ (where $V$ and its dual $V^*$ carry the norm topology).
	
	\item The multiplication map $m \colon A \times A \to A$ is minimal for every topological unital ring $A$. If $m$ is strongly minimal, then $A$ is necessarily  a minimal topological ring 
	(i.e., having no strictly coarser Hausdorff ring topology).
	\item More generally, for a topological unital ring $K$ define   
	$$w_n \colon K^n\times K^n\to K, \ w_n(\bar{x},\bar{y})= \sum_{i=1}^n x_iy_i.$$ 
 (a) Then $w_n$ is a  minimal biadditive mapping for every $n\in \N$. 
	
	\sk 
	Indeed, let $\nu$ be the given topology on $K$ and $\s=\t=\nu^n$ be the product topology on $K^n.$ To see that $w_n$ is minimal it suffices to show that if $\s_1\subseteq \s$ is a coarser group topology such that the triple $(\s_1,\t,\nu)$ is compatible then $\s_1=\s.$ To this aim, it is enough to prove that  the projection map $p_i\colon (K^n,\s_1)\to K$ is continuous for any $i\in \{1,\ldots, n\}.$  The continuity of this map follows from the $(\s_1,\t,\nu)$-continuity of $w_n$ taking into account that $w_n(\bar{x},e_i)=x_i$ for every $\bar{x}\in K^n$, where $e_i\in K^n$ is defined by $p_i(e_i)=1$ and $p_j(e_i)=0$ whenever $j\neq i.$ 
	
	(b) If $K$ is an archimedean absolute valued (not necessarily associative) division ring, then $w_n$ is even strongly minimal \cite{SH}.
	\een
	\end{example} 
	
	Regarding 
	 assertion (3) recall that every non-discrete locally retrobounded division ring is  minimal as a topological ring. Indeed, this follows from  \cite[Theorem 13.8]{Warner-R}. 
	 In particular, every local field is a minimal topological ring. 
	 Recall that a \textit{local field} is a non-discrete locally compact field.  
	
	\begin{remark} \label{switch} 
		Let $w \colon E \times F \to A$
		be a continuous separated biadditive mapping with the Heisenberg group $H(w)=(A \oplus E) \rtimes F$. 
		Consider the switched biadditive mapping 
		$$
		w_{\blacktriangleleft} \colon  F \times E \to A, \ w_{\blacktriangleleft}(f,x):=w(x,f)=f(x).
		$$ 
		Then we have the corresponding Heisenberg group  
		$H(w_{\blacktriangleleft})=(A \oplus F) \rtimes E$ and the following topological isomorphism 
		$$s \colon H(w) \to H(w_{\blacktriangleleft}), \ \ \  (a,x,f) \mapsto (f(x)-a,-f,-x).$$ 
	\end{remark}
	
\begin{theorem} \label{t:AisKey} 
	Assume that $w \colon E \times F \to A$ is a separated  biadditive map. 
	Then the following conditions are equivalent:
	\begin{enumerate}
		\item $A$ is a key subgroup of the generalized Heisenberg group $H(w)$. 
		\item $w$ is a minimal biadditive mapping.
		\item $E$ and $F$ are co-key subgroups in $H(w)$. 
	\end{enumerate}  	 
\end{theorem}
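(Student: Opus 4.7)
The plan is to establish (1) $\Leftrightarrow$ (2) and (2) $\Leftrightarrow$ (3). The easy implication (1) $\Rightarrow$ (2) follows immediately from Lemma \ref{l:min-map}.2: any compatible triple $(\sigma_1,\tau_1,\nu)$ lifts to a coarser Hausdorff group topology $\gamma_1 \in \mathcal{T}_{\downarrow}(H(w))$ with $\gamma_1|_A = \nu$, and the key property of $A$ then forces $\gamma_1 = \gamma$, hence $\sigma_1 = \sigma$ and $\tau_1 = \tau$.

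For (2) $\Rightarrow$ (1), suppose $w$ is minimal and fix $\gamma_1 \in \mathcal{T}_{\downarrow}(H(w))$ with $\gamma_1|_A = \nu$. Since $w$ is separated, $A = Z(H(w))$, so the continuous commutator of $(H(w),\gamma_1)$ takes values in $A$ and descends to a continuous map $(H(w)/A,\gamma_1/A)^{\,2} \to (A,\nu)$ on the abelian quotient $H(w)/A \cong E\oplus F$. Its restriction to $E\times F$ equals $-w$, so $((\gamma_1/A)|_E,(\gamma_1/A)|_F,\nu)$ is a compatible triple for $w$; minimality then upgrades both restrictions to $\sigma$ and $\tau$. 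By Fact \ref{r:RD}.2 this rewrites as $(\gamma_1|_{A\oplus E})/A = (\gamma_1/A)|_E = \sigma$, and combining with $\gamma_1|_A = \nu$, Merson's Lemma (Fact \ref{firMer}) yields $\gamma_1|_{A\oplus E} = \nu\times\sigma = \gamma|_{A\oplus E}$. Since minimality of $w$ implies t-exactness of the $F$-action on $A\oplus E$, Theorem \ref{examples2}.1 now gives that $A\oplus E$ is a key subgroup of $H(w)$, and we conclude $\gamma_1 = \gamma$.

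For (2) $\Rightarrow$ (3), Theorem \ref{examples2}.2 directly gives that $F$ is co-key in $H(w) = (A\oplus E)\rtimes F$. For $E$ one transfers through the topological isomorphism $s\colon H(w)\to H(w_{\blacktriangleleft}) = (A\oplus F)\rtimes E$ of Remark \ref{switch}: compatible triples for $w$ and $w_{\blacktriangleleft}$ coincide, so $w_{\blacktriangleleft}$ is also minimal and Theorem \ref{examples2}.2 applies in that picture. For (3) $\Rightarrow$ (2) I argue contrapositively. If $w$ is not minimal, pick a compatible triple $(\sigma_1,\tau_1,\nu)$ with $(\sigma_1,\tau_1) \neq (\sigma,\tau)$. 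Refining a coarser source topology to the original preserves compatibility of $w$, so by symmetry we may assume $\sigma_1 \subsetneq \sigma$ and $\tau_1 = \tau$. The induced $\gamma_1$ from Lemma \ref{l:min-map}.2 is strictly coarser than $\gamma$, yet the section $(a,f)\mapsto(a,0,f)\colon A\times F \to H(w)$ identifies both $\gamma_1/E$ and $\gamma/E$ with the product topology $\nu\times\tau$ on $A\times F$, forcing $\gamma_1/E = \gamma/E$ and contradicting co-keyness of $E$.

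The principal obstacle lies in the second paragraph: upgrading the single restriction $\gamma_1|_A = \nu$ all the way to the full restriction $\gamma_1|_{A\oplus E} = \nu\times\sigma$. This requires coordinating centrality of $A$, the commutator's factorization through $H(w)/A$, the minimality hypothesis on $w$, and the Merson--Fact \ref{r:RD}.2 tandem; only then does the previously established Theorem \ref{examples2}.1 for semidirect products become applicable.
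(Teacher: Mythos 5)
Your proof is correct, and its overall skeleton matches the paper's: (1)$\Leftrightarrow$(2) by lifting compatible triples to product topologies on $H(w)$, and (2)$\Leftrightarrow$(3) via t-exactness, Theorem \ref{examples2}.2 and the switch isomorphism of Remark \ref{switch}. The genuine difference is in (2)$\Rightarrow$(1): the paper simply cites \cite[Proposition 2.9]{MEG95} (equivalently \cite[Lemma 5.9]{DM14}), whereas you supply a self-contained argument --- the commutator of $(H(w),\gamma_1)$ takes values in the center $A$, descends through the open quotient $q\times q$ to a continuous map on $(H(w)/A)^2$ restricting to $-w$ on $E\times F$, so minimality pins down $(\gamma_1/A)|_E$ and $(\gamma_1/A)|_F$; Fact \ref{r:RD}.2 and Merson's Lemma then recover $\gamma_1|_{A\oplus E}$, and Theorem \ref{examples2}.1 finishes. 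This is a valid reconstruction of the cited result and makes the paper's chain of reasoning fully internal; the only step you assert without proof is that minimality of $w$ forces t-exactness of the induced actions, which is exactly \cite[Lemma 2.2]{MEG95} (and is a one-line check: a coarser admissible topology on $F$ yields a compatible triple). In (3)$\Rightarrow$(2) your bookkeeping is actually cleaner than the paper's text: weakening $\sigma$ to $\sigma_1$ on $E$ leaves the coset topology on $H(w)/E\cong A\oplus F$ untouched and so witnesses the failure of co-keyness of $E$ (the paper's displayed argument weakens the topology on $E$ but draws the conclusion for $F$, for which one should instead weaken $\tau$, since $\gamma_1/F\cong\nu\times\sigma_1$ changes). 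Two small polish points: justify that your section $A\times F\to H(w)/E$ is a homeomorphism for both topologies (either directly, noting that $(a,x,f)\mapsto(a-f(x),f)$ is constant on $E$-cosets and continuous by compatibility, or via the switch and \cite[Prop.\ 6.17(a)]{RD}, since $E$ is not normal), and note that in the symmetric case $\tau_1\subsetneq\tau$ your contradiction is with co-keyness of $F$ rather than of $E$ --- harmless, since (3) asserts both.
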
 
\begin{proof} (1) $\Rightarrow$ (2):  	If $w$ is not minimal, then there exist Hausdorff group topologies $\s_1 \subseteq\s$, $\t_1 \subseteq \t$ on $E$ and $F$ respectively such that 
	$$
	w: (E,\s_1) \times (F,\t_1) \to (A,\nu)
	$$
	is continuous and $\s_1 \neq \s$ or $\t_1 \neq \t$. Then the corresponding Heisenberg group $H(w)= (A \times E) \rtimes F$ is well defined with respect to the product topology $(\nu \times \s_1 \times \t_1)$ which is strictly coarser than the original topology  $(\nu \times \s \times \t)$. However, both of them induce the same topology $\nu$ on the subgroup $A$. 
	
	\sk 
	(2) $\Rightarrow$ (1):   It is a reformulation of \cite[Proposition 2.9]{MEG95}, or of \cite[Lemma 5.9]{DM14}. 
	
	\sk 
	 (2) $\Rightarrow$ (3) 
	If $w$ is minimal then \cite[Lemma 2.2]{MEG95} guarantees that the both of the induced actions: (i) of $F$ on $A \oplus E$  and (ii) $E$ on $A \oplus F$ are t-exact. So, 
	Theorem 
	\ref{examples2}.2 
	is applicable. It implies that $F$ and $E$ are co-key subgroups in the corresponding semidirect products which both can be identified with $H(w)$.  	
	
	\sk 
	(3) $\Rightarrow$ (2)  	
	Let $\s_1 \subset \s$ be a strictly coarser Hausdorff group topology on $E$ such that 
	$w \colon  E \times F \to A$ is still continuous. Consider the corresponding semidirect product $(M,\g_1)=(A \oplus E) \rtimes F$. Then the quotient topology $\g_1/F$ on $A \oplus E$ is 
	the original topology 
	 (see \cite[Prop. 6.17 (a)]{RD}). 
	That is, $\g_1/F=\g / F$. On the other hand, $\g_1 \neq \g$. 
	This means that $F$ is not a co-key subgroup of $H(w)$. 
	
	Similar proof works for the subgroup $E$ of $H(w)$ using the 
	switched biadditive mapping 
	$w_{\blacktriangleleft} \colon  F \times E \to A$ and the isomorphism $s \colon H(w) \to H(w_{\blacktriangleleft})=(A \oplus F) \rtimes E$ from Remark \ref{switch} where 
	$s(E)=E$. 
\end{proof}

Clearly, if $A$ is a minimal group (e.g., compact) then Theorem \ref{t:AisKey}  ensures that  $H(w)$ is a minimal group for every minimal biadditive mapping $w$.  
One of the related 
applications (from \cite{MEG95}) is 
establishing the minimality of the Heisenberg group $H(w)$ modeled on the duality mapping 
$w \colon G \times G^* \to \T$ for every locally compact group $G$ (where $G^*$ is its dual group).

\begin{theorem} \label{t:AisCo-Min} 
	Assume that $w \colon E \times F \to A$ is a separated biadditive map.  
Then the following conditions are equivalent:
\begin{enumerate}
	\item $A$ is a co-minimal subgroup of $H(w)$. 
	\item $w$ is strongly minimal. 
	\item $E$ and $F$ are relatively minimal in $H(w)$. 
\end{enumerate} 
\end{theorem}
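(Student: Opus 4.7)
The plan is to mirror the proof of Theorem \ref{t:AisKey}, establishing the cycle $(1)\Rightarrow(2)$, $(2)\Leftrightarrow(3)$, and finally the more delicate $(2)\Rightarrow(1)$.

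For $(1)\Rightarrow(2)$, I take an arbitrary compatible triple $(\sigma_1,\tau_1,\nu_1)$ and form the coarser Hausdorff group topology $\g_1 := \nu_1\times\sigma_1\times\tau_1 \in \mathcal{T}_{\downarrow}(H(w))$ provided by Lemma \ref{l:min-map}.2. Since $\g_1/A = \sigma_1\times\tau_1$ while $\g/A = \sigma\times\tau$, co-minimality of $A$ forces these quotients to coincide on $E\oplus F$, and passing to subspaces on $E$ and $F$ yields $\sigma_1=\sigma$, $\tau_1=\tau$. The equivalence $(2)\Leftrightarrow(3)$ is exactly as in the proof of Theorem \ref{t:AisKey}: for any $\g_1\in \mathcal{T}_{\downarrow}(H(w))$, the $\g_1$-continuity of the commutator together with $[(0,0,f),(0,x,0)] = (w(x,f),0,0)$ makes $(\g_1|_E,\g_1|_F,\g_1|_A)$ a compatible triple, so strong minimality gives $\g_1|_E=\sigma$, $\g_1|_F=\tau$; conversely, any compatible triple yields the product topology $\nu_1\times\sigma_1\times\tau_1$, whose restrictions $\sigma_1,\tau_1$ on $E,F$ must equal $\sigma,\tau$ by relative minimality.

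The heart of the argument is $(2)\Rightarrow(1)$. Fix $\g_1\in \mathcal{T}_{\downarrow}(H(w))$ and set $\nu_1 := \g_1|_A$; $(2)\Rightarrow(3)$ already gives $\g_1|_E=\sigma$ and $\g_1|_F=\tau$, and $\g_1\subseteq\g$ automatically yields $\g_1/A\subseteq\g/A = \sigma\times\tau$. To prove the reverse inclusion it suffices to show that $A\times U\times V\subseteq H(w)$ is $\g_1$-open for every $\sigma$-neighborhood $U$ of $0\in E$ and every $\tau$-neighborhood $V$ of $0\in F$. The commutator identities $[(0,0,f_0),(a,x,f)] = (w(x,f_0),0,0)$ and $[(0,x_0,0),(a,x,f)] = (-w(x_0,f),0,0)$ show that for each $f_0\in F$ and $x_0\in E$, the maps $\phi_{f_0}(a,x,f) = w(x,f_0)$ and $\psi_{x_0}(a,x,f) = w(x_0,f)$ from $H(w)$ to $A$ are $(\g_1,\nu_1)$-continuous. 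Consequently, given $\nu_1$-open neighborhoods $V_i,V'_j$ of $0\in A$, the $\g_1$-open set $\bigcap_i \phi_{f_i}^{-1}(V_i) \cap \bigcap_j \psi_{x_j}^{-1}(V'_j)$ unwinds to $A\times U_0\times V_0$, where $U_0 := \bigcap_i w_{f_i}^{-1}(V_i)$ and $V_0 := \bigcap_j w_{x_j}^{-1}(V'_j)$, no constraint being placed on the $A$-coordinate.

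The principal obstacle I expect is to show that every $\sigma$-neighborhood $U$ of $0$ contains such a $U_0$ (and symmetrically for $V$). I plan to deduce this from strong minimality by reformulating it as: for every coarser Hausdorff group topology $\nu_1$ on $A$, $\sigma$ coincides with the coarsest Hausdorff group topology on $E$ making $w\colon E\times(F,\tau)\to(A,\nu_1)$ continuous, and for a biadditive map with Hausdorff codomain this coarsest topology is precisely the one generated at $0$ by the subbasis $\{w_f^{-1}(V): f\in F,\ V \text{ a }\nu_1\text{-neighborhood of }0\in A\}$ (symmetrically for $\tau$ on $F$). Granted this characterization, a $\g_1$-open set $A\times U_0\times V_0 \subseteq A\times U\times V$ containing $(0,0,0)$ exists, and translation invariance of $\g_1$ extends this to every point of the cylinder, making $A\times U\times V$ a $\g_1$-open set. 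This yields $\sigma\times\tau\subseteq \g_1/A$ and hence $\g_1/A=\g/A$, completing the co-minimality of $A$.
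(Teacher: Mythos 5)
Your implications $(1)\Rightarrow(2)$ and $(2)\Leftrightarrow(3)$ are correct and essentially coincide with the paper's treatment (the paper argues $(1)\Rightarrow(2)$ and $(3)\Rightarrow(2)$ contrapositively via the product topology $\nu_1\times\s_1\times\t_1$ from Lemma \ref{l:min-map}.2, and your commutator argument for $(2)\Rightarrow(3)$ is the standard one). The problem is your proof of $(2)\Rightarrow(1)$, which the paper itself does not prove directly but delegates to \cite[Theorem 5.1]{DM10}.

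The gap is the claim that the coarsest Hausdorff group topology on $E$ making $w\colon E\times(F,\t)\to(A,\nu_1)$ continuous is the one generated at $0$ by the subbasis $\{w_f^{-1}(V): f\in F,\ V\in\nu_1(0)\}$. That weak topology only makes $w$ \emph{separately} continuous in the first variable; it need not make $w$ jointly continuous (the cross term $w(u,v)$ with $u$ in a weak neighborhood and $v$ in a $\t$-neighborhood is uncontrolled), so it is not a competitor in the infimum, and the genuine coarsest compatible topology can be strictly finer. A concrete counterexample sits inside the paper: by Example \ref{p:s-m-bi}.2 the canonical pairing $w\colon V\times V^*\to\R$ of an infinite-dimensional Banach space is strongly minimal, so $\s$ is the norm topology; yet a norm ball contains no finite intersection $\bigcap_i w_{f_i}^{-1}(V_i)$, since every such set contains a finite-codimensional subspace. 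Consequently the $\g_1$-open cylinders $A\times U_0\times V_0$ you manufacture from commutators do not run through a base of $\g/A$, and the inclusion $\g/A\subseteq\g_1/A$ is not established. The actual proof of $(2)\Rightarrow(1)$ in \cite{DM10} proceeds differently, via Merson's Lemma applied to the quotient group $G/A\cong E\oplus F$ together with the relative minimality of $E$ and $F$ obtained in step $(2)\Rightarrow(3)$; some argument of that kind is needed to close your cycle.
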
 
\begin{proof} 
	(1) $\Rightarrow$ (2):	If $w$ is not strongly minimal then there exist Hausdorff group topologies $\s_1 \subseteq\s$, $\t_1 \subseteq \t$, $\nu_1 \subseteq \nu$ on 
	$E$, $F$ and $A,$  
	 respectively, such that 
	$$
	w: (E,\s_1) \times (F,\t_1) \to (A,\nu_1)
	$$
	is continuous and $\s_1 \neq \s$ or $\t_1 \neq \t$. Then the corresponding Heisenberg group $(H(w),\g_1)= (A \times E) \rtimes F$ is well defined with respect to the product topology $\g_1:=(\nu \times \s_1 \times \t_1)$ which is strictly coarser than the original topology  $\g:=(\nu \times \s \times \t)$. However, the new coset topology on $(H(w),\g_1)/A$ is strictly coarser than the original topology $(H(w),\g)/A$.

	\sk
		(2) $\Rightarrow$ (1) and 	(2) $\Rightarrow$ (3): 
		See \cite[Theorem 5.1]{DM10}. 
		
		\sk 
			(3) $\Rightarrow$ (2):  
			Very similar to the proof of (1) $\Rightarrow$ (2). 
			 Let $(\s_1,\t_1,\nu_1)$ be a compatible triple of coarser Hausdorff group topologies.  
 Assuming that $w$ is not strongly minimal. Then $\s_1 \neq \s$ or $\t_1 \neq \t$. 
The corresponding Heisenberg group $(H(w),\g_1)= (A \times E) \rtimes F$ is well defined with respect to  $\g_1:=(\nu \times \s_1 \times \t_1)$, 
 where at least one the subspace topologies on $E$ or on $F$ is strictly coarser than the original one.    
	\end{proof}
		
\begin{corollary}\label{prop: knoco}
	Assume that $w \colon E \times F \to A$ is a separated biadditive map. Then the following conditions are equivalent: 
	\begin{enumerate}
		\item $A$ is a key subgroup of $H(w)$ but not co-minimal. 
		\item $w$ is minimal but not strongly minimal. 
		\item $E$ and $F$ are co-key subgroups of $H(w)$ but at least one of them is not relatively minimal.  
	\end{enumerate}
\end{corollary}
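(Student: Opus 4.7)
The plan is essentially to combine Theorems \ref{t:AisKey} and \ref{t:AisCo-Min} in the obvious way, since the three conditions in the corollary are just the ``positive minus negative'' combinations of the conditions appearing in those two theorems.

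First I would record the translations. By Theorem \ref{t:AisKey}, the statements ``$A$ is a key subgroup of $H(w)$'', ``$w$ is minimal'', and ``both $E$ and $F$ are co-key subgroups of $H(w)$'' are mutually equivalent. By Theorem \ref{t:AisCo-Min}, the statements ``$A$ is a co-minimal subgroup of $H(w)$'', ``$w$ is strongly minimal'', and ``both $E$ and $F$ are relatively minimal in $H(w)$'' are mutually equivalent. Negating the second cluster, ``$w$ is not strongly minimal'' is equivalent to ``$A$ is not co-minimal in $H(w)$'' and also to ``at least one of $E, F$ fails to be relatively minimal in $H(w)$''.

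For (1) $\Leftrightarrow$ (2): condition (1) is the conjunction of ``$A$ is key'' and ``$A$ is not co-minimal'', which by the above translations is precisely the conjunction of ``$w$ is minimal'' and ``$w$ is not strongly minimal'', i.e., condition (2). For (2) $\Leftrightarrow$ (3): condition (3) is the conjunction of ``$E$ and $F$ are both co-key in $H(w)$'' and ``at least one of $E, F$ is not relatively minimal in $H(w)$'', which translates exactly to ``$w$ is minimal'' and ``$w$ is not strongly minimal'', i.e., condition (2) again.

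There is really no obstacle here, since both ``half'' equivalences have already been established in the two preceding theorems; the corollary is purely a bookkeeping step of intersecting a ``yes'' with a ``no''. The only thing to be careful about is that in (3) the co-key condition is required for \emph{both} $E$ and $F$, while the failure of relative minimality only needs \emph{one} of them, which matches the negation of the ``$E$ and $F$ are relatively minimal'' statement in Theorem \ref{t:AisCo-Min}.3. No new computation is needed.
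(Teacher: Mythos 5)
Your proposal is correct and matches the paper's proof, which consists exactly of the single line ``Combine Theorems \ref{t:AisKey} and \ref{t:AisCo-Min}.'' You have merely spelled out the same bookkeeping in more detail, including the correct handling of the quantifiers in condition (3).
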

\begin{proof}
	Combine Theorems \ref{t:AisKey} and \ref{t:AisCo-Min}. 
\end{proof}

	\begin{corollary} \label{ex:mnosm} 
			Consider the multiplication map $m \colon \Z \times \Z \to \Z$, where $\Z$ is the discrete ring of all integers and $H(m)=(\Z \times \Z) \rtimes \Z$ is the corresponding (discrete) Heisenberg group.  
			$$ H(m):=
			\left( \begin{array}{ccc}
				1 & \Z & \Z \\
				0 & 1 & \Z \\
				0 & 0 & 1 
			\end{array}\right).  
			$$	 
		Then 
		\begin{enumerate}
			\item  the center (corner-subgroup)
			 $A:=G_{1,3}$ (isomorphic to $\Z$) is a key subgroup in $H(m)$ but 
			  not co-minimal;  
			\item the 1-parameter non-corner integer subgroups 
			$G_{1,2}$, $G_{2,3}$ 
		(isomorphic to $\Z$) are co-key but not relatively minimal in $H(m)$. 
		\end{enumerate} 		
	\end{corollary}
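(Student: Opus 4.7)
The plan is to identify $H(m)$ with the generalized Heisenberg group $H(w)$ attached to the biadditive map $w = m \colon \Z \times \Z \to \Z$ (so $E = F = A = \Z$, each carrying the discrete topology), and then invoke Corollary \ref{prop: knoco}. This reduces both (1) and (2) to a single combined statement: the multiplication $m$ on the discrete ring $\Z$ is minimal but not strongly minimal.

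For minimality of $m$, I would appeal directly to Example \ref{p:s-m-bi}.4(a) with $n = 1$, or equivalently to Example \ref{p:s-m-bi}.3 applied to the unital topological ring $\Z$. To show $m$ is not strongly minimal, I would exhibit an explicit compatible triple $(\sigma_1,\tau_1,\nu_1)$ with $\sigma_1 \neq \sigma$. Fixing a prime $p$ and letting $\tau_p$ denote the $p$-adic topology on $\Z$, the triple $(\tau_p,\tau_p,\tau_p)$ is compatible, since the $p$-adic multiplication $\Z \times \Z \to \Z$ is continuous. As $\tau_p$ is a strictly coarser Hausdorff group topology than the discrete one on $\Z$, this witnesses the failure of strong minimality.

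At this point Corollary \ref{prop: knoco} immediately yields (1): $A = G_{1,3}$ is a key subgroup of $H(m)$ but not co-minimal. It also yields that both $E = G_{2,3}$ and $F = G_{1,2}$ are co-key in $H(m)$. The one (mild) obstacle is that Corollary \ref{prop: knoco}.3 only asserts that \emph{at least one} of $E, F$ fails relative minimality, whereas (2) asks for both. I would resolve this directly using the compatible triple above: by Lemma \ref{l:min-map}.2 it produces a strictly coarser Hausdorff group topology $\gamma_1$ on $H(m)$, given by the $p$-adic topology on each of the three factors, and its restriction to both $E$ and $F$ is $\tau_p$, strictly coarser than the discrete topology. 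Hence neither $G_{1,2}$ nor $G_{2,3}$ is relatively minimal in $H(m)$, completing (2).
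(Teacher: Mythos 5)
Your proposal is correct and follows essentially the same route as the paper: identify $H(m)$ with the Heisenberg group of the biadditive map $m$, establish that $m$ is minimal (Example \ref{p:s-m-bi}.3) but not strongly minimal, and conclude via Corollary \ref{prop: knoco}. The paper's own proof is terser on the non-strong-minimality step — it simply notes that $\Z$ is not a minimal topological ring and invokes the implication in Example \ref{p:s-m-bi}.3 that strong minimality of $m$ would force ring minimality — whereas you exhibit the explicit compatible triple $(\tau_p,\tau_p,\tau_p)$; both are fine.

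The one place where you genuinely add something is the point you flag yourself: Corollary \ref{prop: knoco}.3 only guarantees that \emph{at least one} of $G_{1,2}$, $G_{2,3}$ fails relative minimality, while assertion (2) claims both do. The paper's proof does not address this explicitly. Your fix — using Lemma \ref{l:min-map}.2 to pass from the compatible triple $(\tau_p,\tau_p,\tau_p)$ to the coarser Hausdorff group topology $\tau_p\times\tau_p\times\tau_p$ on $H(m)$, which restricts to the non-discrete topology $\tau_p$ on each of $E$ and $F$ simultaneously — is correct and closes the gap cleanly. (An alternative would be to appeal to the symmetry of $m$ via the switch isomorphism of Remark \ref{switch}, but your direct argument is simpler.) So your write-up is, if anything, slightly more complete than the paper's.
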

\begin{proof} 	
	Clearly, $\Z$ is not a minimal ring (the $p$-adic topology is a ring topology). 
	So, $m$ is minimal (Example \ref{p:s-m-bi}.3) but not strongly minimal. Now apply  Corollary \ref{prop: knoco}.   
\end{proof}
	
The same idea works for every topological unital ring $M$ 
which is not a minimal topological ring.


%
%
%
%
%
%
%

\begin{theorem} \label{t:newStrMin} 
	Let $\F$ be a local field.  
	Then the multiplication map $m\colon \F \times \F \to \F$ is strongly minimal. 
\end{theorem}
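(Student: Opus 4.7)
My plan is to derive the theorem from Lemma \ref{l:min-map}.3: since $\F$ is a topological unital ring, Example \ref{p:s-m-bi}.3 gives that the multiplication map $m$ is already minimal. Lemma \ref{l:min-map}.3 then upgrades this to strong minimality provided that the additive topological group $(\F,+)$ is itself a minimal topological group. Thus the whole task reduces to verifying minimality of $(\F,+)$ for every local field $\F$.

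For the archimedean case ($\F \in \{\R, \C\}$), this is immediate: $(\F,+)\cong \R^n$ is a classical example of a minimal topological abelian group (Stephenson \cite{S71}). Note that the stronger conclusion in this case is already covered by Example \ref{p:s-m-bi}.4(b) via the strong minimality of $w_1$ for archimedean absolute valued division rings, so the new content of the theorem is really the non-archimedean case.

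For a non-archimedean local field $\F$, the additive group is a $\sigma$-compact, totally disconnected LCA group admitting a neighborhood basis of $0$ consisting of the compact open subgroups $\mathfrak{m}^n$ (powers of the maximal ideal of the ring of integers $\mathcal{O}_\F$). Here I would invoke Pontryagin self-duality of $\F$: each $\mathfrak{m}^n$ can be realized as the kernel of a suitable continuous character of $\F$. This structural rigidity is exactly what standard minimality criteria for LCA groups require in order to exclude strictly coarser Hausdorff group topologies (see, e.g., \cite{DPS89}).

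The main obstacle is the non-archimedean step: one must invoke the correct minimality criterion for LCA groups and check its hypotheses against the explicit structure of $(\F,+)$. Should this path present difficulties, a backup strategy is to argue directly without passing through Lemma \ref{l:min-map}.3. Given a compatible triple $(\sigma_1, \tau_1, \nu_1)$, one first observes $\nu_1 \subseteq \sigma_1 \cap \tau_1$ using $m(x,1)=x$ and $m(1,y)=y$, then attempts to construct from it a Hausdorff ring topology $\rho \subseteq \nu$ on $\F$ (for instance as the initial topology with respect to the family of left multiplications $L_y\colon \F \to (\F,\nu_1)$, $y\in \F^\times$), and finally applies Warner's theorem \cite[Theorem 13.8]{Warner-R} --- cited in the paper to show that $\F$ is a minimal topological ring --- to conclude $\rho = \nu$, and hence $\sigma_1 = \tau_1 = \nu$. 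The technical crux in this backup approach is verifying that the constructed $\rho$ really is a ring topology, which requires combining the joint continuity of $m$ in the triple $(\sigma_1,\tau_1,\nu_1)$ with the algebraic identity $L_z\circ m(a,b)=m(L_z(a),b)$.
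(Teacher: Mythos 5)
Your main route fails at its central step: the additive group of a local field is \emph{never} a minimal topological group, so Lemma \ref{l:min-map}.3 cannot be applied with $A=(\F,+)$. By the Prodanov--Stoyanov theorem \cite{PS} (quoted in the paper), minimal abelian groups are precompact; a local field is locally compact but not compact, hence $(\F,+)$ is not precompact and therefore not minimal. Your assertion that $(\R^n,+)$ is ``a classical example of a minimal topological abelian group'' is simply false --- the paper itself exhibits in Remark \ref{r:PotDense} a strictly coarser Hausdorff group topology on $\R^n$, namely the one induced by the injective continuous homomorphism $t\mapsto (e^{it},e^{\sqrt{2}it})$ into $\T^2$. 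The same obstruction kills the non-archimedean case: $\Q_p$ and $\F_q((t))$ are $\sigma$-compact but not compact, hence not precompact, hence not minimal as topological groups. So the reduction ``minimal map $+$ minimal $A$ $\Rightarrow$ strongly minimal map'' is unavailable here; indeed, if it were available the theorem would be nearly trivial, whereas the whole point is to get strong minimality \emph{without} minimality of the target group.

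Your backup strategy also has an unrepaired gap at exactly the point you flag as the crux. To show that the initial topology $\rho$ with respect to the maps $L_z\colon \F\to(\F,\nu_1)$ is a ring topology, you would factor $L_z\circ m=m\circ(L_z\times \mathrm{id})$ and then need $L_z\colon(\F,\rho)\to(\F,\sigma_1)$ and $\mathrm{id}\colon(\F,\rho)\to(\F,\tau_1)$ to be continuous; but all you know is $\rho\supseteq\nu_1$ and $\nu_1\subseteq\sigma_1\cap\tau_1$, which points in the wrong direction. Moreover, even if you could conclude $\rho=\nu$ from Warner's theorem, this does not yield $\nu_1=\nu$ (since $\rho$ may be strictly finer than $\nu_1$), and hence does not force $\sigma_1=\tau_1=\nu$. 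The paper's actual argument is quite different and more elementary: writing the topology of $\F$ via an absolute value, it shows that if $\sigma_1\neq\tau$ then every $\sigma_1$-neighborhood of $0$ is unbounded (using the Heine--Borel property of local fields), and then that for any unbounded set $S$ and any $\tau$-neighborhood $V$ of $0$ one has $1\in SV$, which contradicts the Hausdorffness of $\nu_1$ through the continuity of $m$ on $(\F,\sigma_1)\times(\F,\tau_1)$. You would need to adopt an argument of this kind (or otherwise exploit local compactness directly) rather than either of the two routes you propose.
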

\begin{proof} 
It is well known 
(see \cite[page 27]{MA})
 that there exists a non-trivial absolute value on $\F$ which generates the topology $\t$. 	
Assuming that $m$ is not strongly minimal, there exist coarser Hausdorff group topologies $\s_1, \t_1, \nu_1 \subseteq \t$
on $\F$  such that 
$$
m\colon (F,\s_1) \times (F,\t_1) \to (F,\nu_1)
$$
is continuous and $\s_1 \neq\ \t$ or $\t_1 \neq \t$. Then the corresponding Heisenberg group $(H(w),\g_1)= (\F \times \F) \rtimes \F$ is well defined with respect to the product topology $\g_1:=(\nu \times \s_1 \times \t_1)$ which is strictly coarser than the original topology  $\g:=(\nu \times \s \times \t)$.

Without restriction of generality, suppose that $\s_1 \neq \t$.  We first show that every $\s_1$-neighborhood of the zero element is unbounded with respect to the  absolute value.

Otherwise, there exists a bounded $\s_1$-neighborhood  $U$ of $0_\F$ and, clearly, we may assume that $U$ is $\t$-closed. By an  analogue of the Heine-Borel theorem for non-archimedean local fields (see \cite[Page 5]{V}), we deduce that $U$ is $\t$-compact. In particular, $\s_1$ and $\t$ coincide on $U.$ 
Since $U$ is a $\s_1$-neighborhood   of $0_\F$ it follows that $\s_1= \t,$ a contradiction.

Next, we show that for every unbounded subset $S \subset \F$ and every $\t$-neighborhood $V$ of $0_{\F}$ (or, even any nonempty $\t$-open subset) it holds that $1 \in SV$. Using the continuity of 
$$
m \colon (F,\s_1) \times (F,\t_1) \to (F,\nu_1),
$$ we obtain a contradiction to the fact that $\nu_1$ is Hausdorff.

Indeed, without restriction of generality,  let $V=B(0,\eps):=\{x \in \F: |x|<\eps\}$. 
Choose $s \in S$ such that $|s|> \frac{1}{\eps}$. 
Then $|s^{-1}|=|s|^{-1}<\eps$ and  $s^{-1} \in V$. Hence, $1=s \cdot s^{-1} \in SV.$  
\end{proof}

%

%
%
%

\section {The upper unitriangular group $\UT(n,K)$}
In the sequel  $K$ is a 	commutative  topological unital ring and $n\geq 2$ is a positive integer.  Let $G=\UT(n,K)$ be the upper unitriangular group over $K$ of degree $n$. 

Denote by $\UT^{m}(n,K)$ (where $m \in \{0,1,\cdots, n\}$) the subgroup of $\UT(n,K)$ having $m$ consecutive zero-diagonals parallel to the main diagonal. 
Hence, we have 
$$
\UT(n,K)=\UT^{0}(n,K) \geq \UT^{1}(n,K) \geq \cdots \UT^{n-1}(n,K)=\{I_n\}.
$$

\begin{remark} \label{r:n=3} 
For $n=3$ we have an important (and motivating) particular case of the Heisenberg group 
$$H(w)\simeq \UT(3,K):=
\Bigg\{\left( \begin{array}{ccc}
	1 &  a_{1,2} & a_{1,3} \\
	0 & 1 & a_{2,3}  \\
	0 & 0 & 1 
\end{array}\right)\bigg |  \ a_{i,j} \in K \Bigg\}.  
$$
Below we use several times that $\UT(3,K)$ is naturally isomorphic to the Hesienberg group $H(w) = (K \oplus K) \rtimes K$, modelled by the multiplication map $m \colon K \times K \to K$. 	
\end{remark}

Recall that $G=\UT(n,K)$ is nilpotent of class $n-1$. In fact, 
$$
G' = \UT^{1}(n,K)\cong \UT(n-1,K)
$$ 
as every matrix in the derived subgroup $G'$ has the first super diagonal consisting only of zero entries.  For $1\leq i<j\leq n,$ let $G_{i,j}$ be the 1-parameter subgroup of $G$ such that for every matrix $X\in G_{i,j}$ we have 
$p_{k,l}(X)=x_{k,l}=0$ if $k\neq l$ and $(k,l)\neq (i,j),$ 
 where $p_{k,l}\colon G \to K, \ p_{k,l}(X)=x_{k,l}$ is the canonical coordinate projection. 
 For $i<j$	and $x\in K$ consider also the \textit{transvection matrix} 
  $e_{i,j}(x):=I+x E_{i,j}\in  G_{i,j}$, where $E_{i,j}$ is the elementary matrix having $1$ on the $(i,j)$ entry and zeros elsewhere. 
%
%

\begin{remark}\label{rem:iso}
It is well known that the center 
$Z(G)$ 
 of $G=\UT(n,K)$ is
the corner 1-parameter group  
 $G_{1,n}= \UT^{n-2}(n,K)$ (see, in particular, \cite{KargMerz}).  Clearly, $Z(G)$ is an isomorphic copy of the additive group $K.$ In particular, $\UT(2,K)\cong K$ as  $\UT(2,K)$ is abelian.
\end{remark}

\begin{lemma}\label{lem:comandtrans}
Let  $M\ \in G$, $x\in K$ and $1\leq i<j<k\leq n.$ Then \ben 
\item $p_{i,k}([M,e_{j,k}(x)])=xp_{i,j}(M).$
\item   $p_{i,k}([e_{i,j}(x),M])=-xp_{j,k}(M^{-1}).$ 
\item $p_{n-2,n}([e_{n-2,n-1}(x),M])=xp_{n-1,n}(M).$
\een
\end{lemma}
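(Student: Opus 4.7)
The plan is a direct matrix computation in the ring $K$, exploiting the fact that $M, M^{-1} \in \UT(n,K)$ and that the transvections $e_{p,q}(x) = I + xE_{p,q}$ have inverse $e_{p,q}(x)^{-1} = I - xE_{p,q}$, so that $E_{p,q}E_{r,s} = \delta_{q,r}E_{p,s}$.

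For (1), I will expand $[M,e_{j,k}(x)] = M e_{j,k}(x) M^{-1} e_{j,k}(x)^{-1}$ as a polynomial in $x$. The constant term is $I$, and the linear term is $x\bigl(ME_{j,k}M^{-1} - E_{j,k}\bigr)$, while the quadratic term is $-x^2 ME_{j,k}M^{-1}E_{j,k}$. Reading off the $(i,k)$ entry: $(E_{j,k})_{i,k}=0$ (since $i<j$), and $(ME_{j,k}M^{-1})_{i,k} = M_{i,j}(M^{-1})_{k,k} = p_{i,j}(M)$ because $M^{-1}$ is unitriangular. For the quadratic term, $(ME_{j,k}M^{-1}E_{j,k})_{a,b} = M_{a,j}(M^{-1})_{k,j}\delta_{b,k}$, and the factor $(M^{-1})_{k,j}$ vanishes as $k>j$. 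Combining these gives $p_{i,k}([M,e_{j,k}(x)]) = x\,p_{i,j}(M)$, which is (1).

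For (2), the same strategy applied to $[e_{i,j}(x),M] = e_{i,j}(x)Me_{i,j}(x)^{-1}M^{-1}$ yields the expansion $I + xE_{i,j} - xME_{i,j}M^{-1} - x^2 E_{i,j}ME_{i,j}M^{-1}$. The $(i,k)$ entry of $E_{i,j}$ is $0$ (since $j<k$), and $(ME_{i,j}M^{-1})_{i,k} = M_{i,i}(M^{-1})_{j,k} = p_{j,k}(M^{-1})$. The quadratic term has $(E_{i,j}ME_{i,j})_{a,b}$ proportional to $M_{j,i}$, which vanishes because $j>i$ and $M$ is upper unitriangular. This yields $p_{i,k}([e_{i,j}(x),M]) = -x\,p_{j,k}(M^{-1})$, proving (2).

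For (3), I will apply (2) with the specific choice $i=n-2$, $j=n-1$, $k=n$, obtaining $p_{n-2,n}([e_{n-2,n-1}(x),M]) = -x\,p_{n-1,n}(M^{-1})$. The final step is the identity $p_{n-1,n}(M^{-1}) = -p_{n-1,n}(M)$, which holds because for any $M\in \UT(n,K)$ the inverse agrees with $2I-M$ on the first super-diagonal (equivalently, by block-computing $MM^{-1}=I$ at position $(n-1,n)$ one gets $(M^{-1})_{n-1,n} + M_{n-1,n} = 0$). Substituting yields (3). The whole argument is routine; the only minor subtlety is bookkeeping of which entries of $E_{p,q}$ and of the upper unitriangular $M, M^{-1}$ vanish by the various index inequalities $i<j<k$.
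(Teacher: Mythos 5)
Your proof is correct and takes essentially the same route as the paper: a direct matrix computation of the $(i,k)$ entry of each commutator, with part (3) deduced from part (2) via the identity $p_{n-1,n}(M^{-1})=-p_{n-1,n}(M)$ obtained from $(MM^{-1})_{n-1,n}=0$. The only difference is organizational — you expand the commutator as a polynomial in $x$ using $E_{p,q}E_{r,s}=\delta_{q,r}E_{p,s}$, whereas the paper writes out the entry as explicit sums over intermediate indices and simplifies using $MM^{-1}=I$; both verifications are equivalent.
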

\begin{proof}
(1) For  $1\leq i<j<k\leq n$ we have $$p_{i,k}([M,e_{j,k}(x)])=
p_{i,k}(M e_{j,k}(x) M^{-1} e_{j,k}(x)^{-1} )=\sum_{s=1}^n p_{i,s}(M e_{j,k}(x)) p_{s,k}(M^{-1} e_{j,k}(x)^{-1})=  $$$$=\sum_{s=1}^n\bigg(\sum_{\ell=1}^n p_{i,\ell}(M)p_{\ell,s}(e_{j,k}(x))
\bigg)\bigg(\sum_{t=1}^n p_{s,t}(M^{-1})p_{t,k}(e_{j,k}(x)^{-1})\bigg)=$$$$= 
\bigg(\sum_{s\in \{1,\ldots n\}\setminus \{k\}} p_{i,s}(M)
(p_{s,k}(M^{-1})-x p_{s,j}(M^{-1}))\bigg)+(p_{i,k}(M)+xp_{i,j}(M)) =$$$$=\bigg(\sum_{s=1}^n ( p_{i,s}(M)
)(p_{s,k}(M^{-1}))\bigg)-\bigg(x\sum_{s=1}^n ( p_{i,s}(M)
)(p_{s,j}(M^{-1}))\bigg)+(xp_{i,j}(M))=$$$$=p_{i,k}(I_n)-xp_{i,j}(I_n)+xp_{i,j}(M)=$$$$0+0+xp_{i,j}(M)=xp_{i,j}(M).$$
(2) We have $$p_{i,k}([e_{i,j}(x),M])=
p_{i,k}(e_{i,j}(x) Me_{i,j}(x)^{-1}M^{-1} )=\sum_{s=1}^n p_{i,s}(e_{i,j}(x) M) p_{s,k}(e_{i,j}(x)^{-1} M^{-1})=  $$$$=\sum_{s=1}^n\bigg(\sum_{\ell=1}^n p_{i,\ell}(e_{i,j}(x))p_{\ell,s}(M)
\bigg)\bigg(\sum_{t=1}^n p_{s,t}(e_{i,j}(x)^{-1})p_{t,k}(M^{-1})\bigg)=$$$$= 
\bigg(\sum_{s\in \{1,\ldots n\}\setminus \{i\}} (p_{i,s}(M)+xp_{j,s}(M))
(p_{s,k}(M^{-1})\bigg)+(p_{i,k}(M^{-1})-xp_{j,k}(M)^{-1}) =$$$$=\bigg(\sum_{s=1}^n ( p_{i,s}(M)
)(p_{s,k}(M^{-1}))\bigg)+\bigg(x\sum_{s=1}^n ( p_{j,s}(M)
)(p_{s,k}(M^{-1}))\bigg)-(xp_{j,k}(M^{-1}))=$$$$=p_{i,k}(I_n)+xp_{j,k}(I_n)-xp_{j,k}(M^{-1})=$$$$0+0-xp_{j,k}(M^{-1})=-xp_{j,k}(M^{-1}).$$ 
(3) By (2), $p_{n-2,n}([e_{n-2,n-1}(x),M])=-xp_{n-1,n}(M^{-1}).$ As $M$ is an upper unitriangular matrix, we have $p_{n-1,n}(M^{-1})=-p_{n-1,n}(M).$
So, we get $p_{n-2,n}([e_{n-2,n-1}(x),M])=xp_{n-1,n}(M).$
\end{proof}
\subsection{Key subgroups of the upper unitriangular group $\UT(n,K)$}

The following theorem is closely related to 
\cite[Proposition 2.9]{MEG95} and \cite[Lemma 5.9]{DM14} (take the particular case of $n=3$ and compare to Theorem \ref{t:AisKey} ). 

\begin{theorem} \label{t:1} 
	Let $K$ be a
	commutative topological unital ring and $n\geq 2$ be a positive integer. Then the center of $G=\UT(n,K)$ (that is, the corner subgroup $\UT^{n-2}(n,K)$) 
	is a key subgroup. 
\end{theorem}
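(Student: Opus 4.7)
The plan is to deduce the theorem from Theorem \ref{examples2} by working with the normal abelian \emph{last-column} subgroup $L := G_{1,n} \oplus G_{2,n} \oplus \cdots \oplus G_{n-1,n} \cong K^{n-1}$. Concretely, I would show that (i) $L$ is a key subgroup of $G$, and (ii) any candidate coarser Hausdorff group topology $\gamma_1 \subseteq \gamma$ with $\gamma_1|_{Z(G)} = \gamma|_{Z(G)}$ already satisfies $\gamma_1|_L = \gamma|_L$; combining (i) and (ii) via the definition of key subgroup gives $\gamma_1 = \gamma$.

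For (i), observe that $G$ is the topological semidirect product $L \rtimes \UT(n-1, K)_{\text{top-left}}$, where the right-hand factor is the top-left $(n-1) \times (n-1)$ block copy of $\UT(n-1, K)$, acting on $L$ by conjugation. I would verify that this action is t-exact: a short matrix computation gives $g \cdot e_{k,n}(1) \cdot g^{-1} \cdot e_{k,n}(-1) = I + \sum_{p < k} g_{p,k} E_{p,n}$ for each $k \in \{2, \ldots, n-1\}$, so any coarser group topology on $\UT(n-1, K)_{\text{top-left}}$ for which the action remains continuous must make each super-diagonal entry $g_{p,k}$ continuous to $K$; running $k$ over its range recovers every above-diagonal entry of $g$, forcing such a topology to refine, hence equal, the original. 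Theorem \ref{examples2}.1 then yields $L \in \mathcal{K}(G)$.

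For (ii), the Heisenberg subgroups $H_{1,j,n} := \langle G_{1,j}, G_{j,n}, G_{1,n} \rangle \cong \UT(3, K)$ (for $j \in \{2, \ldots, n-1\}$) all have center $Z(G) = G_{1,n}$; since the multiplication map $m \colon K \times K \to K$ is minimal (Example \ref{p:s-m-bi}.3), Theorem \ref{t:AisKey} gives $\gamma_1|_{H_{1,j,n}} = \gamma|_{H_{1,j,n}}$, and in particular $\gamma_1|_{G_{i,n}} = \gamma|_{G_{i,n}}$ for every $i \in \{1, \ldots, n-1\}$. Writing $l \in L$ as $l = \prod_{a=1}^{n-1} e_{a,n}(c_a(l))$, a direct matrix calculation yields $[e_{1,i}(1), l] = e_{1,n}(c_i(l))$ for $i \in \{2, \ldots, n-1\}$, which exhibits each $c_i$ ($i \geq 2$) as a $\gamma_1|_L$-continuous map $L \to K$ factoring through $Z(G)$. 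For the remaining coordinate $c_1$, the abelian structure of $L$ gives $l \cdot \prod_{i=2}^{n-1} e_{i,n}(-c_i(l)) = e_{1,n}(c_1(l)) \in Z(G)$, whose $\gamma_1|_L$-continuity together with that of $\gamma_1|_{Z(G)}$ delivers continuity of $c_1$. With every coordinate projection continuous, $\gamma_1|_L$ refines the product topology $\gamma|_L$, and since $\gamma_1 \subseteq \gamma$, equality holds. The main obstacle I anticipate is the t-exactness step, where one needs the collection $\{e_{k,n}(1) : 2 \leq k \leq n-1\}$ of probes to be rich enough to detect all entries of the acting group; once this is settled, the Heisenberg argument transfers the central topology to $L$ almost mechanically, using the commutator trick to bypass the absence of a group-theoretic section $G \to Z(G)$.
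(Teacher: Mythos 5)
Your proof is correct, but it takes a genuinely different route from the paper's. The paper argues by induction on $n$ through the derived subgroup $G'=\UT^{1}(n,K)\cong \UT(n-1,K)$: the inductive hypothesis yields $\gamma_1|_{G'}=\gamma|_{G'}$, and Merson's Lemma reduces the problem to the continuity of the superdiagonal projections $p_{i,i+1}$, which is extracted from commutator identities (Lemma \ref{lem:comandtrans}) together with the minimality of the multiplication map $m\colon K\times K\to K$. You instead split $G=L\rtimes \UT(n-1,K)$ along the last column, verify t-exactness of the conjugation action by probing with the transvections $e_{k,n}(1)$ (your formula $g\,e_{k,n}(1)\,g^{-1}e_{k,n}(-1)=I+\sum_{p<k}g_{p,k}E_{p,n}$ does recover every above-diagonal entry of the top-left block, so Theorem \ref{examples2}.1 indeed gives $L\in\mathcal K(G)$), and then transfer the central hypothesis from $Z(G)=G_{1,n}$ to all of $L$ via the embedded copies $\langle G_{1,j},G_{j,n},G_{1,n}\rangle\cong\UT(3,K)$ of the Heisenberg group plus the commutator trick $[e_{1,i}(1),l]=e_{1,n}(c_i(l))$; this correctly circumvents the fact that $Z(G)$ is certainly not a key subgroup of the abelian group $L\cong K^{n-1}$ on its own. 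Both arguments ultimately run on the same two engines --- Merson's Lemma (used directly in the paper, and indirectly through Theorem \ref{examples2} in your version) and the minimality of $m$ via the $n=3$ case --- but yours replaces the induction on the center of $\UT(n-1,K)$ by an explicit t-exactness computation for a single normal abelian subgroup, which arguably makes the structural reason for the result (a Heisenberg-type extension over the last column) more visible. Two small points to tidy up: state the degenerate cases $n=2$ (trivial) and $n=3$ (where your Heisenberg subgroup is all of $G$) separately, as the paper does, and note explicitly that the multiplication map is separated so that Theorem \ref{t:AisKey} applies.
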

\begin{proof}
	We proceed by induction on $n$. 
	For $n=2$ the assertion is trivial as $\UT(2,K)$ is abelian. The case of $n=3$ follows from  Theorem \ref{t:AisKey}  and  Example \ref{p:s-m-bi}.3, in view of the isomorphism $H(w)\simeq \UT(3,K)$, where $w \colon K\times K\to K$ is the multiplication map and $H(\omega)$ is the Hesienberg group.
	
	Let $\gamma_1 \subseteq \gamma$ be a coarser Hausdorff group topology on $G=\UT(n,K),$ where $n\geq 4,$ such that  $\gamma_1|_{Z(G)}=\gamma|_{Z(G)}$ and recall that
	$Z(G)=\UT^{n-2}(n,K)=G_{1,n}.$
	We have to prove that $\gamma_1=\gamma.$ By the induction hypothesis, $\gamma_1|_{G'}=\gamma|_{G'}$ as
	$$
	G' = \UT^{1}(n,K)\cong \UT(n-1,K). 
	$$ 
	In particular, $\g_1|_{G_{i,i+2}}=\g|_{G_{i,i+2}}$ for every $i\in\{1,\ldots, n-2\}$. We have to show that $\gamma_1=\gamma$. By Merson's Lemma, it is enough to show that 
	$\gamma_1/ {G'}=\gamma / {G'}$. 	To this aim, it suffices to show that all the projections   
	$$p_{i,i+1} \colon (G,\gamma_1) \to G_{i,i+1}, \ i\in \{1,\ldots n-1\} $$
	are continuous (where the  1-parameter subgroups $G_{i,i+1}$  carry the original topology $\g|_{G_{i,i+1}}=\g_1|_{G_{i,i+1}}$). 
	
	The idea of the multiplication map in the following claim comes from Lemma \ref{lem:comandtrans}.1. 
	\vskip 0.3cm
	\noindent
	\textbf{Claim 1}
	The multiplication map $$w \colon (G_{i,i+1}, \g_1 / \ker p_{i,i+1})\times (G_{i+1,i+2}, \g_1|_{G_{i+1,i+2}})  \to (G_{i,i+2}, \g_1|_{G_{i,i+2}}),$$ 
	$$  w(p_{i,i+1}(M),x)=x p_{i,i+1}(M), \ M\in G, \ x \in G_{i+1,i+2}$$ 
	\sk 
	is continuous for every $i\in \{1,\ldots, n-2\}.$
	\begin{proof}			
		Fix $i\in \{2,\ldots, n-1\}$ and $(p_{i,i+1}(M),x)\in  G_{i,i+1}\times G_{i+1,i+2}.$ 	 Let $O$ be a
		neighborhood of $xp_{i,i+1}(M)$ in $(G_{i,i+2}, \g_1|_{G_{i,i+2}}).$ By Lemma \ref{lem:comandtrans}.1, $$p_{i,i+2}([M,e_{i+1,i+2}(x)])=xp_{i,i+1}(M).$$  As $i\leq n-2$, it holds that $G_{i,i+2}\leq G'.$  Since  $\gamma_1|_{G'}=\gamma|_{G'}$  we can choose a neighborhood $W$ of $[M,e_{i+1,i+2}(x)]$ in $(G', \g|_{G'})$ such that $p_{i,i+2}(W) \subseteq  O,$ in view of the continuity of the projection   
		$$p_{i,i+2} \colon (G',\gamma|_{G'}) \to (G_{i,i+2},\gamma|_{G_{i,i+2}}).$$
		Using the $\g_1$-continuity of the commutator function on $G$  as well as the equality $\gamma_1|_{G'}=\gamma|_{G'}$ we can find  $\g_1$-neighborhoods  $U$ and $V$ of  $M$ and $e_{i+1,i+2}(x)$, respectively, such that $[A,B]\in W$ for every $A\in U, \ B\in V.$ In particular,  $$y p_{i,i+1}(N)=p_{i,i+2}([N,e_{i+1,i+2}(y)])\in  p_{i+1,i+2}(W) \subseteq  O$$  for every $N\in U, \ y\in V\cap G_{i+1,i+2}$.  This proves the continuity of $$w: (G_{i,i+1}, \g_1 / \ker p_{i,i+1})\times (G_{i+1,i+2}, \g_1|_{G_{i+1,i+2}})  \to (G_{i,i+2}, \g_1|_{G_{1,i+1}})$$ at the arbitrary pair $(p_{i,i+1}(M),x)\in  G_{i,i+1}\times G_{i+1,i+2}$. 
	\end{proof} 
	
	\noindent 
	\textbf{Claim 2} The multiplication map
	$$w:(G_{n-2,n-1}, \g_1|_{G_{n-2,n-1}}) \times (G_{n-1,n}, \g_1 / \ker p_{n-1,n})\to (G_{n-2,n}, \g_1|_{G_{n-2,n}}),$$$$  w(x,p_{n-1,n}(M))=x p_{n-1,n}(M), \ M\in G, \ x \in G_{n-2,n-1}$$
	is continuous.
	\begin{proof}
		Use the $\g_1$-continuity of the commutator function on $G$  as well as Lemma \ref{lem:comandtrans}.3.  On $G'$ the new topology is the same as the original topology. In particular, on $G_{n-2,n}.$
	\end{proof}	
	By Example \ref{p:s-m-bi}.3, the multiplication map $m:K \times K \to K$ is minimal. It follows from Claim 1 and Claim 2 that  $\g_1 / \ker p_{i,i+1}=\g / \ker p_{i,i+1}$ for every 	$i\in \{1,\ldots n-1\}$ and we complete the proof of the theorem. 
\end{proof}


\begin{proposition}\label{pro:admin}
	Let $K$ be a 	commutative  topological unital ring. Then the matrix group $\UT(n,K)$ is minimal  if and only if the additive group $(K,+)$ is minimal. 
\end{proposition}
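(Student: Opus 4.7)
The plan is to derive this proposition as a direct consequence of Theorem \ref{t:1} combined with the standard characterization of minimality via relatively minimal key subgroups recorded in the introduction. The bridge between the ring $K$ and the matrix group $\UT(n,K)$ is provided by Remark \ref{rem:iso}, which identifies the center $Z(G) = G_{1,n} = \UT^{n-2}(n,K)$ with the additive topological group $(K,+)$.

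For the forward implication, I would apply Lemma \ref{lem:ceniskey}: if $G = \UT(n,K)$ is minimal, then $Z(G)$ must be a minimal topological group. Since $Z(G) \cong (K,+)$ by Remark \ref{rem:iso}, this gives minimality of $(K,+)$.

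For the reverse implication, I would assume that $(K,+)$ is minimal and use Remark \ref{rem:iso} to transfer minimality to $Z(G)$. A minimal subgroup is automatically relatively minimal, since any coarser Hausdorff group topology $\gamma_1 \subseteq \gamma$ on $G$ restricts to a coarser Hausdorff group topology $\gamma_1|_{Z(G)} \subseteq \gamma|_{Z(G)}$, which must coincide with $\gamma|_{Z(G)}$ by minimality of $Z(G)$. Theorem \ref{t:1} simultaneously guarantees that $Z(G)$ is a key subgroup of $G$. Consequently $Z(G)$ is both relatively minimal and key, and the observation from the introduction (a topological group is minimal if and only if it contains a relatively minimal key subgroup) forces $\UT(n,K)$ to be minimal.

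No real obstacle is expected: all the genuine work is already contained in Theorem \ref{t:1}. The present proposition merely repackages it by noting that the abstract hypothesis ``$(K,+)$ is minimal'' is precisely what is needed to upgrade the key subgroup $Z(G)$ of Theorem \ref{t:1} into a relatively minimal key subgroup.
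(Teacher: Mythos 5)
Your proof is correct and takes essentially the same route as the paper: the paper likewise combines Theorem \ref{t:1} (the center is a key subgroup) with the identification $Z(\UT(n,K))\cong (K,+)$ from Remark \ref{rem:iso}, and then simply invokes Lemma \ref{lem:ceniskey}, which states that $G$ is minimal if and only if $Z(G)$ is a minimal key subgroup. Your explicit unpacking of the reverse direction (minimal $\Rightarrow$ relatively minimal, then the introduction's observation that a relatively minimal key subgroup forces minimality of the ambient group) is just that lemma spelled out in more detail.
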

\begin{proof} 
	The center $Z(G)$ of $G:=\UT(n,K)$ 
	is a key subgroup by  Theorem \ref{t:1} and $Z(G)$ is isomorphic to $K$ for every $n\geq 2$
by Remark \ref{rem:iso}. Now apply Lemma \ref{lem:ceniskey}. 	
\end{proof}

\begin{proposition} 
The corner integer subgroup	$\UT^{n-2}(n,\Z)$ 
(isomorphic to $\Z$) 
 is a key subgroup of $\UT(n,\R)$. 
\end{proposition}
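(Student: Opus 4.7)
The plan is to realize $\UT^{n-2}(n,\Z)$ as the bottom of a length-two chain of key subgroups inside $\UT(n,\R)$ and apply the transitivity principle of Lemma \ref{examples1}.2.

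First I would single out the intermediate subgroup $\UT^{n-2}(n,\R)$. By Remark \ref{rem:iso}, this is exactly the center $Z(\UT(n,\R))$ (the corner $1$-parameter subgroup $G_{1,n}$ with real entries). Since $\R$ is a commutative topological unital ring, Theorem \ref{t:1} applies and gives $\UT^{n-2}(n,\R) \in \mathcal{K}(\UT(n,\R))$.

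Next I would handle the inclusion $\UT^{n-2}(n,\Z) \leq \UT^{n-2}(n,\R)$. Under the canonical isomorphism from the corner subgroup to the additive group of the ring (Remark \ref{rem:iso}), this inclusion becomes precisely $\Z \hookrightarrow \R$. Since $\Z$ is a (locally compact and) Raikov complete co-compact subgroup of $\R$, Proposition \ref{p:cocompact}.1 yields that $\Z$ is a key subgroup of $\R$; equivalently, $\UT^{n-2}(n,\Z) \in \mathcal{K}(\UT^{n-2}(n,\R))$. (This is the exact statement already recorded in Example \ref{ex:R/Z} for $n=1$, and the argument is identical here.)

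Finally, I would invoke the chain property: by Lemma \ref{examples1}.2, the combination of $\UT^{n-2}(n,\Z) \in \mathcal{K}(\UT^{n-2}(n,\R))$ and $\UT^{n-2}(n,\R) \in \mathcal{K}(\UT(n,\R))$ gives $\UT^{n-2}(n,\Z) \in \mathcal{K}(\UT(n,\R))$, which is the desired conclusion. There is essentially no obstacle here beyond correctly identifying the two layers; all the heavy lifting has already been done in Theorem \ref{t:1} (the key-subgroup property of the center of a general unitriangular group) and in the co-compactness argument of Proposition \ref{p:cocompact}.1.
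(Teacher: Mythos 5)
Your proposal is correct and follows exactly the paper's own argument: identify $\UT^{n-2}(n,\R)$ as the center, key in $\UT(n,\R)$ by Theorem \ref{t:1}; transport $\Z\in\mathcal K(\R)$ (Proposition \ref{p:cocompact}.1) through the isomorphism of Remark \ref{rem:iso}; and conclude via the transitivity in Lemma \ref{examples1}.2. No differences worth noting.
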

\begin{proof}
	 Recall that $\Z \in \mathcal K (\R)$ (Proposition \ref{p:cocompact}.1). Hence, 
	  $\UT^{n-2}(n,\Z)$ is a key subgroup of $\UT^{n-2}(n,\R)$ 
	  (by the isomorphism). 
	  On the other hand, 
	by Theorem \ref{t:1}, $\UT^{n-2}(n,\R) \in \mathcal K(\UT(n,\R))$. 
	So, $\UT^{n-2}(n,\Z)$ is a key subgroup of $\UT(n,\R)$ by Lemma \ref{examples1}.2.  
\end{proof}


By a classical theorem of Prodanov--Stoyanov \cite{PS}, minimal abelian groups are precompact. In particular, if $K$ is a topological ring such that the additive group $(K,+)$ is minimal, so that $\UT(n,K)$ is minimal  in view of Proposition \ref{pro:admin}, then $K$ must be precompact. 
It is well known 
(see \cite{D83}, for example) 
that the topology of a precompact unital ring has a local base at $0$ consisting of open two-sided ideals. 
\begin{corollary}
	$\UT(n,(\Z, \tau_p))$ is a minimal topological group. 
\end{corollary}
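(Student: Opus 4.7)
The plan is to reduce the statement directly to Proposition \ref{pro:admin}, which tells us that $\UT(n,K)$ is minimal if and only if the additive group $(K,+)$ is minimal. So the entire task collapses to verifying that the additive group $(\Z,\tau_p)$ is a minimal topological group.

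For this, I would exhibit $(\Z,\tau_p)$ as a dense subgroup of the compact group $\Z_p$ of $p$-adic integers. Since $\Z_p$ is compact and Hausdorff, in order to apply the standard criterion for dense minimal subgroups of compact abelian groups (the essentiality characterization due to Prodanov--Stephenson, see \cite{DPS89}), I need to show that every non-trivial closed subgroup of $\Z_p$ meets $\Z$ non-trivially. The closed subgroups of $\Z_p$ are precisely $\{0\}$ and the ideals $p^k\Z_p$ for $k\geq 0$, and clearly $p^k\Z_p\cap\Z=p^k\Z\neq\{0\}$. Hence $(\Z,\tau_p)$ is essential in $\Z_p$, and therefore minimal.

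Combining this with Proposition \ref{pro:admin} gives the minimality of $\UT(n,(\Z,\tau_p))$. The main (in fact the only) non-routine step is recalling the essentiality criterion for minimality of dense subgroups of compact abelian groups; once that is in hand the rest is immediate from the description of closed subgroups of $\Z_p$ and Proposition \ref{pro:admin}.
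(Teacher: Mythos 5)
Your proposal is correct and follows exactly the route the paper intends: the corollary is stated immediately after Proposition \ref{pro:admin}, so the whole content is the (classical, due to Prodanov) fact that $(\Z,\tau_p)$ is a minimal abelian group, which you verify correctly via density and essentiality in the compact group $\Z_p$ of $p$-adic integers. No gaps.
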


	\begin{lemma} \label{l:also co-minimal}  \cite[Lemma 3.5.1]{DM10}   
	Let $H_2 \leq H_1 \leq G$. If $H_2$ is co-minimal in $G$, then 
	$H_1$ is also co-minimal in $G$.  
	\end{lemma}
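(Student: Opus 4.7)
The plan is to exploit the factorization of quotient maps supplied by Fact \ref{r:RD}. Given any $\gamma_1 \in \mathcal{T}_{\downarrow}(G, \gamma)$, I want to show $\gamma_1/H_1 = \gamma/H_1$. Since $H_2 \leq H_1$, Fact \ref{r:RD}.1 provides the continuous open surjection
$$p \colon G/H_2 \to G/H_1, \quad xH_2 \mapsto xH_1,$$
and the commutative diagram $q_{H_1} = p \circ q_{H_2}$.

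First I would observe that because $p \circ q_{H_2} = q_{H_1}$ and $p$ is surjective, the quotient topology $\gamma_1/H_1$ on $G/H_1$ (the final topology for $q_{H_1} \colon (G, \gamma_1) \to G/H_1$) coincides with the quotient topology induced on $G/H_1$ by $p$ from the space $(G/H_2, \gamma_1/H_2)$. The same applies with $\gamma$ in place of $\gamma_1$, so $\gamma/H_1$ is the quotient topology on $G/H_1$ induced by $p$ from $(G/H_2, \gamma/H_2)$. This is the key transitivity-of-quotients step; it is essentially formal once the diagram of Fact \ref{r:RD}.1 is in hand.

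Next I would invoke the co-minimality hypothesis on $H_2$: by definition, $\gamma_1/H_2 = \gamma/H_2$ as topologies on $G/H_2$. Plugging this equality into the two descriptions from the previous step immediately yields $\gamma_1/H_1 = \gamma/H_1$. Since $\gamma_1 \in \mathcal{T}_{\downarrow}(G, \gamma)$ was arbitrary, $H_1$ is co-minimal in $G$.

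There is no real obstacle here; the statement is essentially the assertion that co-minimality is upward-closed in the poset $\mathcal S(G)$, and the only thing to verify is the transitivity of quotient topologies along $q_{H_1} = p \circ q_{H_2}$. The mild care needed is to ensure that the Hausdorffness of $\gamma_1/H_1$ (required for $\gamma_1/H_1$ to qualify as a Hausdorff quotient) follows from $\gamma_1/H_1 = \gamma/H_1$ itself, which is Hausdorff by assumption on the original topology.
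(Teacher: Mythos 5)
Your proof is correct; the paper itself gives no proof of this lemma (it is quoted from \cite[Lemma 3.5.1]{DM10}), but your argument via the factorization $q_{H_1}=p\circ q_{H_2}$ of Fact \ref{r:RD}.1 and transitivity of quotient topologies is precisely the one the authors themselves use for the analogous co-key statement, Lemma \ref{l:LargerSubAlsoCoKey}.1. Nothing further is needed; the closing remark about Hausdorffness is harmless, since the definition of co-minimality only requires $\gamma_1$ to be Hausdorff on $G$, not on $G/H_1$.
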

 
\begin{lemma}\label{lem:chainofco}
	Let $G$ be a topological group with two normal subgroups $G_1$ and $G_2.$ If $G_2$ is a co-minimal  subgroup of $G_1$ and $G_1$ is co-minimal in $G,$ then $G_2$ is co-minimal in $G.$ 
\end{lemma}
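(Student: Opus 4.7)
The plan is to apply Merson's Lemma (Fact \ref{firMer}) to the quotient topological group $G/G_2$ equipped with its normal subgroup $G_1/G_2$.

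To this end, fix a coarser Hausdorff group topology $\g_1 \in \mathcal{T}_{\downarrow}(G,\g)$; I must show $\g_1/G_2 = \g/G_2$. First, the co-minimality of $G_1$ in $G$ directly yields $\g_1/G_1 = \g/G_1$. Next, the restricted topology $\g_1|_{G_1}$ is a Hausdorff group topology on $G_1$ coarser than $\g|_{G_1}$, so the co-minimality of $G_2$ in $G_1$ gives
$$(\g_1|_{G_1})/G_2 = (\g|_{G_1})/G_2.$$

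Now set $\tau := \g/G_2$ and $\tau_1 := \g_1/G_2$ on $G/G_2$, with $\tau_1 \subseteq \tau$, and put $H := G_1/G_2$, a normal subgroup of $G/G_2$. Using the commutative diagram in Fact \ref{r:RD}.1 (applied separately to $\g$ and to $\g_1$), I can identify the quotient topology induced on $(G/G_2)/H$ with the quotient topology on $G/G_1$, so Step 1 translates to $\tau_1/H = \tau/H$. Using Fact \ref{r:RD}.2, the subspace topologies on $H = q_{G_2}(G_1)$ satisfy $\tau|_H = (\g|_{G_1})/G_2$ and $\tau_1|_H = (\g_1|_{G_1})/G_2$, so Step 2 translates to $\tau_1|_H = \tau|_H$. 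Merson's Lemma (Fact \ref{firMer}), which does not require Hausdorffness, then forces $\tau_1 = \tau$, i.e., $\g_1/G_2 = \g/G_2$.

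The main points to get right will be the bookkeeping in Step 3, namely verifying that the restriction-of-a-quotient coincides with the quotient-of-a-restriction (for which Fact \ref{r:RD}.2 is custom-built) and that the iterated quotient $(G/G_2)/(G_1/G_2)$ is canonically homeomorphic to $G/G_1$ (which is Fact \ref{r:RD}.1). Once these identifications are in place, applying Merson's Lemma is routine and the conclusion follows since $\g_1$ was an arbitrary element of $\mathcal{T}_{\downarrow}(G,\g)$.
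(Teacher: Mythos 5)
Your proof is correct and follows essentially the same route as the paper's: both reduce the claim to the third isomorphism theorem identification $(G/G_2)/(G_1/G_2)\cong G/G_1$ and then combine the two co-minimality hypotheses. The paper's own proof is only a two-line sketch that leaves the Merson's Lemma step and the bookkeeping via Fact \ref{r:RD} implicit, whereas you spell these out explicitly; the substance is the same.
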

\begin{proof}
	If $G$ is a topological group with two normal subgroups $G_1$ and $G_2$ such that $G_2\leq G_1$,  then $$G/G_2\cong  (G/G_1)/(G_1/G_2)$$ by the third isomorphism theorem  for topological groups (see \cite[Theorem 1.5.18]{AT} or  \cite[Theorem 3.2.8.b]{ADG}).
	Now use the co-minimality of $G_2$ in $G_1$ and the co-minimality of $G_1$ in $G.$
\end{proof}

\begin{example}
\label{ex:zinq}
As the rational torus 
$\Q/\Z$ is minimal (see \cite[Example 1.1.a]{DM14}) and $\Z$ is complete,  Proposition \ref{p:cocompact}.2 implies that $\Z$ is a key subgroup of $\Q.$ However,  $\Z$ is not  co-minimal in $\Q.$ Otherwise, as $\Q$ is co-minimal in $\R$ (being a dense subgroup), $\Z$ would be co-minimal in $\R$, in view of Lemma \ref{lem:chainofco}, but this contradicts Example  \ref{ex:R/Z}. 
\end{example}

The following theorem is closely related to results by Dikranjan and Megrelishvili  
\cite[Theorem 5.1]{DM10} (take the particular case of $n=3$ and compare to Theorem \ref{t:AisCo-Min} ).

%

\begin{theorem} \label{t:2} 
	Let $K$ be a
	commutative  topological unital ring. 
	Then  the following are equivalent:
	\ben \item The multiplication map $m \colon K\times K\to K$ is strongly minimal.
	\item 
	$Z(\UT(n,K))=\UT^{n-2}(n,K)$ is 
	co-minimal in $\UT(n,K)$ for every $n\geq 2.$
	\item $Z(\UT(3,K))$ is co-minimal in $\UT(3,K).$\een
\end{theorem}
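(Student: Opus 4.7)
The equivalence $(1) \Leftrightarrow (3)$ follows immediately from Theorem \ref{t:AisCo-Min} applied to the biadditive map $w = m$, using the natural isomorphism $\UT(3, K) \cong H(m)$ from Remark \ref{r:n=3}. Under this isomorphism, the central factor $A = K$ of $H(m)$ corresponds to $Z(\UT(3, K)) = G_{1, 3}$, so the equivalence of (1) and (2) in Theorem \ref{t:AisCo-Min} specializes exactly to the equivalence of our (1) and (3). The implication $(2) \Rightarrow (3)$ is trivial by taking $n = 3$.

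The heart of the proof is $(1) \Rightarrow (2)$, which I plan to establish by induction on $n$. The base cases $n = 2$ (where $\UT(2, K)$ is abelian, so $Z = \UT(2, K)$ is trivially co-minimal) and $n = 3$ (by the already-established $(1) \Leftrightarrow (3)$) are immediate. For the inductive step at $n \geq 4$, let $\gamma_1 \subseteq \gamma$ be a coarser Hausdorff group topology on $G := \UT(n, K)$; we must show $\gamma_1/Z = \gamma/Z$, where $Z := G_{1, n}$.

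The key observation is that, for each $j \in \{2, \ldots, n-1\}$, the subgroup $H_j := \langle G_{1, j}, G_{j, n}, G_{1, n} \rangle$ is isomorphic to $\UT(3, K) \cong H(m)$ with center exactly $G_{1, n} = Z$. Applying the equivalence $(1) \Leftrightarrow (3)$ inside $H_j$, together with Fact \ref{r:RD}.2, yields $(\gamma_1/Z)|_{q(H_j)} = (\gamma/Z)|_{q(H_j)}$ (where $q \colon G \to G/Z$ is the canonical quotient). Consequently, the subspace topologies on $G_{1, j}$ and $G_{j, n}$ inside $G/Z$ (with respect to $\gamma_1/Z$) coincide with the original. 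To extend this to the remaining $1$-parameter subgroups $G_{i, j}$ with $1 < i < j < n$, one uses the continuous commutator maps in $G/Z$ (such as $[e_{1, i}(x), e_{i, j}(y)] = e_{1, j}(xy)$), which produce compatible triples for the biadditive map $m$; strong minimality of $m$ then forces $(\gamma_1/Z)|_{G_{i, j}} = \gamma|_{G_{i, j}}$.

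Having shown $\gamma_1/Z$ induces the original topology on every $1$-parameter subgroup of $G/Z$, the last step is to conclude $\gamma_1/Z = \gamma/Z$ on all of $G/Z$. The natural route is to identify $G/Z$ as an iterated Heisenberg-type extension whose successive central quotients are again controlled by the multiplication map $m$, and apply Merson's Lemma (Fact \ref{firMer}) stage-by-stage, using strong minimality at each stage to equate subspace and quotient topologies. The main obstacle will be managing the bookkeeping for this iterative argument: $G/Z$ is nilpotent of class $n - 2$, so several successive reductions are needed, each introducing a further biadditive map built from $m$ whose strong minimality must be verified (typically by reducing to $m$ itself via coordinate projections as in Example \ref{p:s-m-bi}.4).
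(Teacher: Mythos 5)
Your treatment of $(1)\Leftrightarrow(3)$, of $(2)\Rightarrow(3)$, and of the base cases $n=2,3$ of the induction agrees with the paper, and your idea of controlling the $1$-parameter subgroups of $G/Z$ via embedded copies of $\UT(3,K)$ and via strong minimality of $m$ is sound as far as it goes. The genuine gap is in your last step. Knowing that $\gamma_1/Z$ and $\gamma/Z$ agree on every $1$-parameter subgroup of $G/Z$ does not yield $\gamma_1/Z=\gamma/Z$: a group topology is not determined by its restrictions to a generating family of subgroups (this is exactly why the paper's proof of Theorem \ref{t:1} controls the \emph{quotient} topologies $\gamma_1/\ker p_{i,i+1}$, i.e.\ the continuity of the projections onto the superdiagonal entries, rather than merely the subspace topologies on the $G_{i,i+1}$). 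Moreover $G/Z$ is not of the form $\UT(m,K)$, so the induction hypothesis cannot be invoked for it, and your proposed ``iterated Heisenberg-type extension'' of $G/Z$ with repeated applications of Merson's Lemma is precisely the part that would have to be constructed and verified; as written it is a plan, not a proof.

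The paper sidesteps this assembly problem entirely by a different reduction. Using $Z(G)\le G''\le G'$, the induction hypothesis applied to $G'\cong\UT(n-1,K)$ together with Lemma \ref{l:also co-minimal} shows that $Z(G)$ (and $G''$) are co-minimal in $G'$; by the transitivity Lemma \ref{lem:chainofco} it then suffices to prove that $G'$ is co-minimal in $G$, i.e.\ that $\gamma_1/G'=\gamma/G'$. Since $G/G'\cong K^{n-1}$ via the superdiagonal projections, this reduces to the continuity of the maps $p_{i,i+1}$, which is obtained by rerunning Claims 1 and 2 from the proof of Theorem \ref{t:1}, now using \emph{strong} minimality of $m$ (so that one only needs $\gamma_1|_{G_{i,i+2}}\subseteq\gamma|_{G_{i,i+2}}$ rather than equality) and the co-minimality of $G''$ in $G'$ to keep the targets of the relevant biadditive maps under control. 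If you want to salvage your route, the cleanest fix is to abandon the direct attack on $G/Z$ and adopt this reduction to the abelian quotient $G/G'$.
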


\begin{proof}
$(1)\Rightarrow	(2)$ We prove 
this implication using induction on $n$. For $n=2$ the assertion is trivial as $\UT(2,K)$ is abelian.  The case of $n=3$ follows from 
	Theorem \ref{t:AisCo-Min} 
	and Remark \ref{r:n=3}. 
	
		For $G=\UT(n,K)$ with $n\geq 4$ we have $Z(G)\leq G''\leq G'.$ So, the induction hypothesis and 
	Lemma \ref{l:also co-minimal} 
	imply that both $G_{1,n}=Z(G)$ and $G''$ are co-minimal in $G'.$ 
	So by Lemma \ref{lem:chainofco} it suffices to show that $G'$ is co-minimal in $G.$ 
	Let $\gamma_1 \in \mathcal{T}_{\downarrow}(G,\g)$. We need to show that $\gamma_1/ {G'}=\gamma / {G'}$. Now we follow the arguments appearing in the proof of Theorem \ref{t:1} including Claim 1 and Claim 2 with some modifications. 
	This time we do not  have the assumption that
	$\gamma_1|_{G'}=\gamma|_{G'}$ and so we do not know that
	$\g_1|_{G_{i,i+2}}=\g|_{G_{i,i+2}}$ for every $i\in\{1,\ldots, n-2\}$. Nevertheless, since the multiplication map is strongly minimal it is enough to assume that $\g_1|_{G_{i,i+2}}\subseteq \g|_{G_{i,i+2}}.$ Note also that the projections
		$$p_{i,i+2} \colon (G',\gamma_1|_{G'}) \to (G_{i,i+2},\gamma|_{G_{i,i+2}})$$  are continuous for every $i\in\{1,\ldots, n-2\},$ in view of the co-minimality of $G''$ in $G'.$ 
		
		$(2)\Rightarrow	(3)$ Trivial.
		
		$(3)\Rightarrow	(1)$ Use Proposition \ref{t:AisCo-Min}.
\end{proof}
%
Recall that if $m$ is strongly minimal, then $K$ is necessarily a minimal topological ring (see Example \ref{p:s-m-bi}.3). 
\begin{proposition} \label{p:oneMore}  
	 $\UT^{n-2}(n, \Z)$ is not co-minimal in $\UT(n,\Z)$
 being, however, a key subgroup.  
\end{proposition}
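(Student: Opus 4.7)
The first claim that $\UT^{n-2}(n,\Z)$ is a key subgroup is an immediate application of Theorem \ref{t:1} with $K=\Z$ taken as the discrete commutative unital ring, using that $\UT^{n-2}(n,\Z) = Z(\UT(n,\Z))$ by Remark \ref{rem:iso}. Hence the real task is the co-minimality failure. (For $n=2$ the statement is trivial since $\UT^{0}(2,\Z)=\UT(2,\Z)$; so I assume $n\geq 3$.) What I want to exhibit is a strictly coarser Hausdorff group topology $\g_1\subsetneq \g$ on $G=\UT(n,\Z)$ such that the induced quotient topology $\g_1/Z(G)$ is strictly coarser than the original discrete quotient $\g/Z(G)$.

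The plan is to use the $p$-adic topology. Fix any prime $p$; the ring embedding $\Z\hookrightarrow \Z_p$ induces a group embedding $\UT(n,\Z)\hookrightarrow \UT(n,\Z_p)$, and pulling back the standard $p$-adic topology on $\UT(n,\Z_p)$ yields a Hausdorff group topology $\g_1$ on $G$. A basis of $\g_1$-neighborhoods of $I_n$ is given by the principal congruence subgroups $\Gamma(p^k):=\{M\in \UT(n,\Z):M\equiv I_n \pmod{p^k}\}$. Each $\Gamma(p^k)$ is strictly larger than $\{I_n\}$ (for instance it contains $I_n+p^kE_{1,2}$), so $\g_1$ is non-discrete, hence strictly coarser than $\g$.

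The next step is to check that $\g_1/Z(G)\neq \g/Z(G)$. Since $\g$ is discrete, $\g/Z(G)$ is discrete, so it suffices to show that $\g_1/Z(G)$ is non-discrete. Under the canonical quotient $q\colon G\to G/Z(G)$, the image $q(\Gamma(p^k))$ is a basic $\g_1/Z(G)$-neighborhood of the identity coset. Because $n\geq 3$, the element $I_n+p^kE_{1,2}\in \Gamma(p^k)$ does not lie in $Z(G)=G_{1,n}$, so its coset is a nontrivial element of $q(\Gamma(p^k))$. Thus $\g_1/Z(G)$ admits arbitrarily small nontrivial neighborhoods of the identity, and is therefore strictly coarser than $\g/Z(G)$. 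This furnishes the required witness that $Z(G)=\UT^{n-2}(n,\Z)$ fails to be co-minimal in $G$.

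The only step requiring any real attention is the last one: one must observe that the $p$-adic perturbations of $G$ at entries $(i,j)\neq (1,n)$ survive the quotient by the corner subgroup $Z(G)=G_{1,n}$, so that $\g_1/Z(G)$ remains non-discrete. This is precisely where $n\geq 3$ is used, since then $(1,2)\neq(1,n)$. As a consistency check, the same conclusion can be seen indirectly via Theorem \ref{t:2}: for $K=\Z$ the multiplication $m\colon\Z\times\Z\to\Z$ is not strongly minimal (any $p$-adic topology provides a coarser compatible triple), so condition (3) of Theorem \ref{t:2} fails, recovering in particular the non-co-minimality at $n=3$ already noted in Corollary \ref{ex:mnosm}; however, the direct $p$-adic construction above has the advantage of handling all $n\geq 3$ uniformly.
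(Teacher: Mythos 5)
Your proof is correct, and for the non-co-minimality half it takes a genuinely different route from the paper. The paper's own argument is a two-line citation: key-ness from Theorem \ref{t:1} (same as yours), and non-co-minimality from Theorem \ref{t:2} with $K=\Z$, using that the multiplication $m\colon\Z\times\Z\to\Z$ is not strongly minimal because $\Z$ is not a minimal topological ring. You instead exhibit an explicit witness: the congruence ($p$-adic) topology $\g_1$ on $\UT(n,\Z)$ with neighborhood base the normal subgroups $\Gamma(p^k)$, and you check directly that $\g_1/Z(G)$ is non-discrete because $I_n+p^kE_{1,2}\in\Gamma(p^k)\setminus Z(G)$ for $n\geq 3$. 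Your verification is sound (the $\Gamma(p^k)$ are normal with trivial intersection, the quotient map is open so their images form a base at the identity coset, and the original quotient topology is discrete). What your approach buys: it is self-contained, it makes the witness topology concrete, and it handles every $n\geq 3$ uniformly --- note that condition (2) of Theorem \ref{t:2} is quantified over all $n$, so its negation literally yields failure of co-minimality only for \emph{some} $n$ (or for $n=3$ via condition (3)); your construction sidesteps having to extract the "for each fixed $n$" conclusion from that equivalence. You also correctly flag the degenerate case $n=2$, where $\UT^{0}(2,\Z)$ is the whole group and the non-co-minimality claim is vacuous, which the paper leaves implicit. What the paper's route buys is brevity and the conceptual link to strong minimality of the multiplication map, which is the organizing theme of that section.
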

\begin{proof} By Theorem \ref{t:1} 
	the corner subgroup
	$\UT^{n-2}(n,\Z)=Z(\UT(n,K))$ is a key subgroup in $\UT(n,\Z)$.  
	On the other hand, $\UT^{n-2}(n, \Z)$ (which is isomorphic to $\Z$) is not co-minimal in $\UT(n,\Z)$ by Theorem \ref{t:2} (for $K:=\Z$). Indeed, observe that the multiplication map $m \colon \Z \times \Z \to \Z$ is not strongly minimal  (compare with Example \ref{p:s-m-bi}.3) because $\Z$ is not a minimal topological ring. 
\end{proof}

\begin{theorem} \label{t:LocField} 
	Let $\F$ be a local field or an archimedean absolute valued field. 
	 Then the center $Z(\UT(n,\F))$ (the corner subgroup $\UT^{n-2}(n, \F)$) 
	is co-minimal in $\UT(n,\F).$
\end{theorem}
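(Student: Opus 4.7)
The plan is to derive this as an immediate consequence of Theorem \ref{t:2} together with the already-established strong minimality of the multiplication map $m \colon \F\times\F \to \F$ in the two cases under consideration.

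First, I would recall that Theorem \ref{t:2} gives the equivalence
\[
\text{$Z(\UT(n,\F))$ is co-minimal in $\UT(n,\F)$ for every $n\geq 2$} \iff m \text{ is strongly minimal}.
\]
Thus it suffices to verify that the multiplication $m$ is strongly minimal on $\F$ in both listed situations. For archimedean absolute valued fields, this is precisely the content of Example \ref{p:s-m-bi}.4(b), which asserts the strong minimality of $w_n$ (and in particular $w_1=m$) for archimedean absolute valued division rings. For local fields, this is exactly Theorem \ref{t:newStrMin}, already proved earlier in the paper.

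With both inputs in hand, the result follows by invoking implication $(1)\Rightarrow(2)$ of Theorem \ref{t:2}: the hypothesis that $m$ is strongly minimal forces the corner subgroup $\UT^{n-2}(n,\F)=Z(\UT(n,\F))$ to be co-minimal in $\UT(n,\F)$. There is no real obstacle here since the heavy lifting was done in Theorem \ref{t:2} (which reduces co-minimality of the center to strong minimality of $m$ via induction on $n$, the Heisenberg case $n=3$, and commutator-based continuity arguments on the derived subgroup chain) and in Theorem \ref{t:newStrMin} (which used Heine--Borel-type compactness and inversion inside $\F$ to rule out bounded neighborhoods of $0$ in any strictly coarser compatible topology). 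The only remark worth adding is that both hypothesis classes automatically make $\F$ a minimal topological ring, so Example \ref{p:s-m-bi}.3 is not violated and the strong minimality conclusion is consistent with that necessary condition.
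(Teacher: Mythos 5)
Your proposal is correct and follows exactly the same route as the paper: invoke the equivalence of Theorem \ref{t:2} and supply strong minimality of $m$ via Theorem \ref{t:newStrMin} for local fields and Example \ref{p:s-m-bi}.4 for archimedean absolute valued fields. No further comment is needed.
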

\begin{proof}
	Theorems \ref{t:2} and \ref{t:newStrMin} yield this  result for local fields. For archimedean absolute valued field use also Example  \ref{p:s-m-bi}.4 which asserts that the multiplication map $m\colon \F \times \F \to \F$ is strongly minimal.
\end{proof}

\sk  
\subsection{Co-key subgroups in $\UT(n,K)$} 
The next result and its proof 
 was motivated by  \cite[Theorem 3.8]{SH}.
\begin{theorem} \label{t:co-1} 
	Let $K$ be a commutative topological unital ring and $n\geq 3$ be a positive integer. Then every non-corner   1-parameter subgroup $G^n_{i,j}$ 
	 of $G=\UT(n,K)$ (i.e., when $(i,j)\neq (1,n)$) is a co-key subgroup.
\end{theorem}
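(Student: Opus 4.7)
My plan is to reduce the statement to the already-settled case $n = 3$ by exhibiting, for each non-corner pair $(i,j)$, a topological copy of $\UT(3, K)$ sitting inside $G = \UT(n, K)$ that contains $G^n_{i,j}$ as one of the two non-center $1$-parameter subgroups. Co-keyness then propagates to $G$ via Lemma \ref{l:LargerSubAlsoCoKey}.2.

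Since $(i, j) \neq (1, n)$, either $i \geq 2$ or $j \leq n - 1$; the two cases are symmetric, so suppose first that $i \geq 2$. I would form the subset
\begin{equation*}
H := \{ I_n + a E_{i-1, i} + b E_{i, j} + c E_{i-1, j} : a, b, c \in K \} \subseteq G.
\end{equation*}
From $E_{i-1, i} E_{i, j} = E_{i-1, j}$ and the vanishing of all other pairwise products among these three elementary matrices, one obtains the commutator identity $[e_{i-1, i}(x), e_{i, j}(y)] = e_{i-1, j}(xy)$, which exhibits $H$ as a closed subgroup of $G$ and shows that the assignment
\begin{equation*}
I_n + a E_{i-1, i} + b E_{i, j} + c E_{i-1, j} \; \longmapsto \; \begin{pmatrix} 1 & a & c \\ 0 & 1 & b \\ 0 & 0 & 1 \end{pmatrix}
\end{equation*}
is a topological group isomorphism $H \to \UT(3, K)$. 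Under this identification $G^n_{i, j}$ corresponds to the subgroup $G^3_{2, 3}$, that is, to the factor $E$ in the Heisenberg decomposition $\UT(3, K) \cong H(m) = (K \oplus K) \rtimes K$ modelled on the multiplication map $m \colon K \times K \to K$ (Remark \ref{r:n=3}).

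By Example \ref{p:s-m-bi}.3 the map $m$ is minimal for every commutative topological unital ring, so the implication (2) $\Rightarrow$ (3) of Theorem \ref{t:AisKey} yields that $E$, and therefore $G^n_{i, j}$, is a co-key subgroup of $H$. Applying Lemma \ref{l:LargerSubAlsoCoKey}.2 to the chain $G^n_{i, j} \leq H \leq G$ then upgrades this to co-keyness of $G^n_{i, j}$ in all of $\UT(n, K)$. The complementary case $j \leq n - 1$ is handled identically with $H' := \langle G^n_{i, j}, G^n_{j, j+1} \rangle$, where now $G^n_{i, j}$ plays the role of the factor $F = G^3_{1, 2}$; the same theorem again applies.

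I do not anticipate a substantial obstacle. The only step warranting genuine verification is that $H$ is a topological subgroup of $G$ whose induced topology coincides with the standard topology on $\UT(3, K)$; this is immediate since $H$ is carved out as the intersection of the closed sets $p_{k, l}^{-1}(0)$ for super-diagonal positions $(k, l) \notin \{(i-1, i), (i, j), (i-1, j)\}$, and the coordinate parametrization $K^3 \to H$ is bicontinuous by construction.
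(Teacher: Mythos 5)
Your proof is correct, and while it runs on the same engine as the paper's --- minimality of a biadditive form, the implication (2) $\Rightarrow$ (3) of Theorem \ref{t:AisKey}, and Lemma \ref{l:LargerSubAlsoCoKey} --- the reduction is genuinely different. The paper first handles the border positions ($i=1$ or $j=n$) by placing $G^n_{i,j}$ inside the large Heisenberg subgroup of $\UT(n,K)$ supported on the first row and last column, which requires the minimality of the multi-variable form $w_n(\bar{x},\bar{y})=\sum_k x_k y_k$ together with the down-closedness of $\mathcal{CK}$ (Lemma \ref{l:LargerSubAlsoCoKey}.1), and then reduces interior positions to the border case via the row-and-column deletion isomorphism $\widetilde{\UT}(n,K)\to\UT(n+1-i,K)$. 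You instead go straight to a $3\times 3$ block: your observation that $E_{i-1,i}E_{i,j}=E_{i-1,j}$ is the only surviving pairwise product is right, so $H$ is indeed a subgroup of $G$ carrying the product topology of $K^3$ and topologically isomorphic to $\UT(3,K)\cong H(m)$, with $G^n_{i,j}$ landing on the factor $E$ (or on $F$ in the symmetric case $j\leq n-1$). This needs only the minimality of the multiplication map $m\colon K\times K\to K$ (Example \ref{p:s-m-bi}.3) and a single application of Lemma \ref{l:LargerSubAlsoCoKey}.2, with no case split between border and interior positions. Your route is the more economical one, and the same device would let the relative-minimality analogue stated after Theorem \ref{t:co-1} be proved assuming only that $m$ (rather than $w_n$) is strongly minimal. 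What the paper's choice buys is that the minimality of $w_n$ is established anyway in Example \ref{p:s-m-bi}.4 and the first-row/last-column Heisenberg subgroup captures all border positions simultaneously.
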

\begin{proof} 
  Assume first that $i=1$ or $j=n$. By Example \ref{p:s-m-bi}.3,  the biadditive map  $$w_n\colon K^n\times K^n\to K, \ w_n(\bar{x},\bar{y})= \sum_{i=1}^n x_iy_i$$
   is minimal. By Theorem \ref{t:AisKey}, both subgroups $$\left( \begin{array}{ccc}
  	1 & K^n & 0 \\
  	0 & 1 & 0 \\
  	0 & 0 & 1 
  \end{array}\right), \left( \begin{array}{ccc}
  1 & 0 & 0 \\
  0 & 1 & K^n \\
  0 & 0 & 1 
\end{array}\right) $$ are co-key in $H(w_n).$ Since $G^n_{i,j}$ is a subgroup of one of these two subgroups and  $H(w_n)$ is a subgroup of $G$ it follows from Lemma \ref{l:LargerSubAlsoCoKey}  that $G^n_{i,j}\in \mathcal{CK}(G).$ 

Now suppose that $i>1$ and $j<n.$  Denote by $\widetilde{\UT}(n,K)$ the subgroup of $G$ consisting of all matrices having zeros outside the diagonal at the first $i-1$ rows. Deleting the first $i-1$ rows and $i-1$ columns yields a topological group isomorphism $f\colon \widetilde{\UT}(n,K)\to \UT(n+1-i,K)$ such that $f(G^n_{i,j})=G^{n+1-i}_{1,j+1-i}.$ By the previous case, $G^{n+1-i}_{1,j+1-i}$  is co-key in $\UT(n+1-i,K).$ Since $f$ is an isomorphism it follows that $G^n_{i,j}$ is co-key in $\widetilde{\UT}(n,K).$ As $\widetilde{\UT}(n,K)\leq G$ and using Lemma  \ref{l:LargerSubAlsoCoKey} we deduce   that $G^n_{i,j}\in \mathcal{CK}(G).$ 
\end{proof}


Using similar arguments  and  	Theorem \ref{t:AisCo-Min} instead of Theorem \ref{t:AisKey} one can prove the following.

\begin{theorem}
	Let $K$ be a commutative topological unital ring and $n\geq 3$ be a positive integer such that the biadditive map 
	$$w_n\colon K^n\times K^n\to K, \ w_n(\bar{x},\bar{y})= \sum_{i=1}^n x_iy_i$$ 
	is strongly minimal. Then the 1-parameter subgroup 
	$G_{i,j}$ 
	is  relatively minimal in
	$G=\UT(n,K)$  whenever $(i,j)\neq (1,n).$
\end{theorem}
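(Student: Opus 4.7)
The plan is to copy the proof of Theorem \ref{t:co-1} almost verbatim, substituting Theorem \ref{t:AisCo-Min} for Theorem \ref{t:AisKey} at the decisive step, and using the strong minimality of $w_n$ in place of the unconditional minimality invoked there via Example \ref{p:s-m-bi}.

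I would first record a relative-minimality analogue of Lemma \ref{l:LargerSubAlsoCoKey}: for subgroups $H_1\leq H_2\leq G$, (a) if $H_2$ is relatively minimal in $G$ then so is $H_1$; and (b) if $H_1$ is relatively minimal in $H_2$ then $H_1$ is relatively minimal in $G$. Both reduce to one line: for any $\gamma_1\in\mathcal{T}_{\downarrow}(G,\gamma)$ the restriction $\gamma_1|_{H_2}$ is a coarser Hausdorff group topology on $H_2$, so applying the relevant hypothesis and restricting further to $H_1$ yields $\gamma_1|_{H_1}=\gamma|_{H_1}$.

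With these tools in hand, Case 1 ($i=1$ or $j=n$, with $(i,j)\neq(1,n)$) is immediate. By strong minimality of $w_n$ and Theorem \ref{t:AisCo-Min}, the $F$- and $E$-blocks of the generalized Heisenberg group $H(w_n)$, namely
$$\left(\begin{array}{ccc}1 & K^n & 0\\ 0 & 1 & 0\\ 0 & 0 & 1\end{array}\right) \quad\text{and}\quad \left(\begin{array}{ccc}1 & 0 & 0\\ 0 & 1 & K^n\\ 0 & 0 & 1\end{array}\right),$$
are relatively minimal in $H(w_n)$. Since $G_{i,j}$ is contained in one of these two subgroups, property (a) gives $G_{i,j}$ relatively minimal in $H(w_n)$, and property (b) applied along $H(w_n)\leq \UT(n,K)$ promotes this to relative minimality of $G_{i,j}$ in $\UT(n,K)$. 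For Case 2 ($i>1$, $j<n$) I would follow Theorem \ref{t:co-1} exactly: let $\widetilde{\UT}(n,K)\leq \UT(n,K)$ be the subgroup of matrices whose first $i-1$ rows coincide with those of the identity; deleting those rows and columns produces a topological group isomorphism $f\colon\widetilde{\UT}(n,K)\to\UT(n+1-i,K)$ sending $G^n_{i,j}$ to the non-corner 1-parameter subgroup $G^{n+1-i}_{1,j+1-i}$; apply Case 1 inside $\UT(n+1-i,K)$, transport back by $f$, and invoke (b) along $\widetilde{\UT}(n,K)\leq \UT(n,K)$.

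The only bookkeeping subtlety is that Case 2 formally calls for strong minimality of the shorter map $w_{n+1-i}$; this is handled exactly as in Theorem \ref{t:co-1}, where the uniform minimality of all $w_m$ from Example \ref{p:s-m-bi}(4) plays the analogous role (in the intended applications covered by Example \ref{p:s-m-bi}(4b) and Theorem \ref{t:newStrMin}, strong minimality likewise holds uniformly in $m$). Beyond this, there is no real obstacle: the down-closedness and transitivity of relative minimality parallel exactly the features of co-keyness used in Lemma \ref{l:LargerSubAlsoCoKey}, and the rest of the argument is a direct transcription.
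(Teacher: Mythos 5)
Your proposal is correct and follows exactly the route the paper intends: the paper's entire proof of this theorem is the one-line remark that one repeats the argument of Theorem \ref{t:co-1} with Theorem \ref{t:AisCo-Min} in place of Theorem \ref{t:AisKey}, which is precisely what you do, and your relative-minimality analogues of Lemma \ref{l:LargerSubAlsoCoKey} (down-closedness and transitivity along subgroups) are the right auxiliary facts, proved correctly. The index bookkeeping you flag (needing strong minimality of the shorter maps $w_m$ for $m\le n$) is inherited from the paper's own proof of Theorem \ref{t:co-1} and is harmless, since any compatible triple witnessing failure of strong minimality of $w_m$ extends by product topologies to one for $w_n$, so strong minimality of $w_n$ passes down to all $w_m$ with $m\le n$.
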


\subsection{Some comments}

There are several good reasons to study central key subgroups
and central co-minimal subgroups.  
Let us say that a topological group $G$ is \textit{center-key} if its center $Z(G)$ is a key subgroup in $G$. 
According to Theorem \ref{t:1}, $\UT(n,K)$ is center-key
 for every commutative topological unital ring $K$. 
 The general linear group $\GL(n,\F)$ is center-key for every local field $\F$ 
 (Theorem \ref{t:projective}). 

 
\begin{question} \label{q:center-key} 
	Study natural topological groups which are center-key.  
\end{question}

It is well known (see \cite[Proposition 7.2.5]{DPS89}) that a closed central subgroup of a minimal group is minimal. Taking into account also Lemma \ref{examples1}.1 we obtain that 
a topological group $G$ is minimal if and only if $Z(G)$ is a minimal key subgroup. 
In case that $G$ is Raikov complete  
$G$ is minimal if and only if $Z(G)$ is compact in view of Prodanov-Stoyanov theorem \cite{PS} mentioned above. 

In \cite{BadLeib}, the authors show that groups from a large class of algebraic groups are minimal if and only if the center is compact. Such groups, of course, are center-key.   

It should be interesting to find additional examples of 
topological groups which are center-key (where the center is not necessarily compact). In particular, which of them are co-minimal. This is especially interesting in view of 
the injectivity properties of co-minimal subgroups. See Section \ref{s:inj}. 

  
\sk 

\section {Injectivity property and co-minimal subgroups} 
\label{s:inj} 
%

For a topological group $(G,\g)$ denote by $\mathcal{T}_{\downarrow}(G,\g)$ (or, simply $\mathcal{T}_{\downarrow}(G)$) the poset (in fact, sup-semilattice) of all Hausdorff group topologies $\t$ on $G$ such that $\t \subseteq \g$. 
For every pair $\s, \t \in \mathcal{T}_{\downarrow}(G,\g)$ we have $\sup\{\s,\t\} \in \mathcal{T}_{\downarrow}(G,\g)$. 
The following family 
\begin{equation} \label{eq:sup-top} 
	\{U \cap V: \  U \in \s(e), V \in \t(e)\}
\end{equation} 
constitutes a local base of $\sup\{\s,\t\}$ at the neutral element $e \in G$, where $\s(e)$ and $\t(e)$ are the sets of all $\s$-neighborhoods and $\t$-neighborhoods of $e$.

If $H$ is a subgroup of $G$, then we have a natural order preserving restriction map 
$$r_H \colon \mathcal{T}_{\downarrow}(G) \to \mathcal{T}_{\downarrow}(H), \ \s \mapsto \s|_H.$$  
Furthermore, it is a morphism of sup-semilattices because   
for every pair $\s,\t \in \mathcal{T}_{\downarrow}(G)$ we have (using  Equation \ref{eq:sup-top}) 
$$
\sup\{r_H(\s),r_H(\t)\}=r_H(\sup\{s,\t\}). 
$$
In general, $r_H$ need not be injective even when $H$ is a  key subgroup (see Remark  \ref{r:super-rem}.  
It is onto 
if $H$ is a central subgroup (see Remark  
\ref{r:ONTO}).

\begin{definition} \label{d:inj-key} 
	Let $H$ be a subgroup of a topological group $(G,\gamma)$. 
	We say that $H$ is 
	an \textbf{inj-key subgroup} of $G$ if for every pair 
	$\gamma_1,\gamma_2 \in \mathcal{T}_{\downarrow}(G)$ on $G$ the coincidence of the two topologies on $H$ (that is,  $\gamma_1|_H = \gamma_2|_H$) 
	implies that  $\gamma_1 = \gamma_2$. Equivalently, this means that $$r_H \colon \mathcal{T}_{\downarrow}(G) \to \mathcal{T}_{\downarrow}(H), \ \s \mapsto \s|_H$$ is \textbf{injective}.  
\end{definition}
Clearly, inj-key is key. Below we show that co-minimal implies inj-key. So, we have 

\centerline{\large{co-minimal $\Rightarrow$ inj-key $\Rightarrow$ key}} 
\noindent If $H$ is a central subgroup, then 
(by Lemma \ref{l:ExtLem}.1) $r_H$ is onto and so 
the injectivity of $r_H$ can be replaced by bijectivity and we we get an isomorphism of semilattices.  

Moreover, in this case $H$ is inj-key if and only if it is co-minimal (Theorem \ref{p:inj-key=co-min for central}).
It is an open question if the assumption on the centrality of $H$ can be removed.  

\sk 
Recall  that
every closed central subgroup of a minimal group is minimal 
\cite[Proposition 7.2.5]{DPS89}.
A simple proof comes from a natural  extension argument (see Extension Lemma \ref{l:ExtLem}.1 below) used by Prodanov (for abelian groups) \cite{Prodanov77}.
Different variations of this 
result can be found in several papers. See, for example, 
\cite[Proposition 4.7]{AHSTW06}, \cite[Lemma 3.17]{HPTX19}  
and \cite[Theorem 6.5]{DM10}. 

\begin{lemma} \label{l:ExtLem} (Extension Lemma) 
	Let $H$ be a central subgroup of $(G,\g)$. Let $\s$ be a coarser Hausdorff group topology on $H$. 
	\begin{enumerate}
		\item 
		There exists a Hausdorff group topology $\s^*  \subset \g$ on $G$ such that $\s^* |_H=\s$. The corresponding local base is generated by the family 
		\begin{equation} \label{eq:LocBase} 
			\{UV: 
			U \in \g(e), V \in \s(e)\}. 
		\end{equation}
		\item $\s^*/H=\g/H$. 
		\item  
		If $H$ is 
		$\g$-closed,
		then 
		it is $\s^*$-closed. 
	\end{enumerate} 
\end{lemma} 
	\begin{proof}
		(1) For a direct verification see, 
		\cite[Proposition 3]{Prodanov77} and 
		\cite[Theorem 6.5]{DM10} (where $H$ is not necessarily closed). 
		
		(2) Easily follows from (1) because if $q \colon G \to G/H, \ g \mapsto gH$ is the natural projection, 
		then $q(UV)=q(UH)=q(U)$ (see Equation \ref{eq:LocBase}) for every $U \in \g(e), V \in \s(e)$.

		(3) If $H$ is $\g$-closed, then $\g/H$ is Hausdorff. Hence, by (2), $\s^*/H$ is also Hausdorff. So, $H$ is $\s^*$-closed.  
	\end{proof}

\begin{remark} \label{r:ONTO} \
	\begin{enumerate}
		\item $r_H$ is onto for every central subgroup $H$ of $G$.  
		
		\sk 
		It is a consequence of Lemma \ref{l:ExtLem}.1. 
		
		\item $r_H$ need not be onto (even for abelian subgroups of index 2).  
		
		\sk 
		
		There exists a group $G$ with a subgroup $H$ such that not every topology on $H$ can be extended to a topology of $G$. See, for example, \cite[Exercise 2, page 131]{RD}. 
		This issue 
		was carefully discussed in Section 4.4 of the book \cite{ADG} (based on results of \cite{DS07}).  
		More precisely, as in \cite[Example 4.4.8]{ADG}, take any discontinuous involution $f \colon \T \to \T$ of the circle group $\T$. 
		Consider the discrete semidirect product $G:= \T_d \rtimes <f>$. Then for the 2-index subgroup $H:=\T_d$ of $G$ there is no Hausdorff group topology on $G$ which extends the usual topology of $\T$.    	
	\end{enumerate}  
\end{remark}

\sk 
A 
closed subgroup $H$ of $(G,\g)$ is said to be \textit{strongly closed} (in the sense of \cite{DM10}) if $H$ remains closed in $(G,\t)$ for every  $\t \in \mathcal{T}_{\downarrow}(G,\g)$. 
For discrete groups this is just the classical  \textit{unconditionally closed} subgroups.  
Similarly, an open subgroup $H$ of $(G,\g)$ is called  \textit{strongly open}
if it is open in $(G,\t)$ for every  $\t \in \mathcal{T}_{\downarrow}(G,\g)$.
\begin{theorem} \label{p:inj-key=co-min for central} \ 
	\begin{enumerate}
		\item Every co-minimal subgroup is inj-key.  
		\item Let $H$ be a central subgroup of $(G,\g)$. Then $H$ is inj-key if and only if $H$ is co-minimal in $G$. 	 
		\item Let $H$ be a strongly closed subgroup in $G$  which is co-compact and Raikov complete. Then $H$ is co-minimal. 
		\item Let $H$ be a  subgroup of $(G,\g).$ Then $H$ is inj-key if and only if $H$ is a key subgroup of $(G,\eta)$ for every $\eta\in \mathcal{T}_{\downarrow}(G,\g).$
	\end{enumerate}	
\end{theorem}
\begin{proof} 
	(1) Let $H$ be co-minimal in $(G,\g)$. 
	Assume that $\t, \s \in \mathcal{T}_{\downarrow}(G,\g)$ such that $\t|_H=\s|_H$. 
	We have to show that $\s=\t$. 
	We cannot apply Merson's Lemma directly to the pair $\t, \s$ because $\t$ and $\s$ are not comparable, in general. 
	
	Let $\eta :=\sup\{\t,\s\} \in \mathcal{T}_{\downarrow}(G,\g)$. 
	According to the simple description of $\eta(e)$ (see Equation \ref{eq:sup-top}), $\t|_H=\s|_H$ implies   
	that 
	$$
	\eta|_H=\t|_H=\s|_H.
	$$
	Since $H$ is co-minimal in $(G,\g)$ we have $\t/H=\g/H$. Similarly, $\s/H=\g/H$. 
	Since $\eta :=\sup\{\t,\s\}$ we have $\t/H \subseteq \eta /H$. On the other hand, $\eta \subseteq \g$. Hence, $\eta /H \subseteq \t /H$. Therefore, $\eta /H = \t /H$. Since $\t \subseteq \eta$, one may apply Merson's Lemma which establishes that $\t=\eta$. Similarly, $\s=\eta$. So, we can conclude that $\t=\s$.   
	
	(2)
	Assume that $H$ is inj-key. We have to show that $H$ is co-minimal. If not, then there exists a $T_2$
	group topology $\t \subset \g$ on $G$ such that $\t/H \subsetneq \g/H$. Then clearly, $\t \neq \g$. 
	Then, $\s:=\t|_H \subsetneq \g|_H$, because, otherwise $H$ is not inj-key. By Extension Lemma \ref{l:ExtLem}, we have a $T_2$ group topology $\s^*$ 
	on $G$ such that 
	$\s^*|_H=\s$ and $\s^*/H=\g/H$. By the latter equality, $\s^* \neq \t$. Now the equality $\s^*|_H=\s=\t|_H$ implies that 
	$r_H \colon \mathcal{T}_{\downarrow}(G) \to \mathcal{T}_{\downarrow}(H)$ is not injective. Meaning that  
	$H$ is not inj-key in $(G,\g)$.  
	
	(3) 	Assume that $\t \in \mathcal{T}_{\downarrow}(G,\g)$. Then $H$ is $\t$-closed. Therefore, $\t/H$ is a Hausdorff group topology. Since $\t/H \subseteq \g/H$ and $\g/H$ is compact, we get 
	$\t/H = \g/H$.  
	
	(4) The necessity of the condition is trivial. Let us  prove that it is also sufficient. To this aim let   $\t, \s \in \mathcal{T}_{\downarrow}(G,\g)$ such that $\t|_H=\s|_H$. 	We have to show that $\s=\t$.   Using the same arguments as  in  the proof of  (1), we obtain   that $
	\eta|_H=\t|_H=\s|_H,
	$	where $\eta :=\sup\{\t,\s\} \in \mathcal{T}_{\downarrow}(G,\g)$. By our assumption, $H$ is a key subgroup of $(G,\eta)$. It follows that $\eta=\s=\t.$
\end{proof}
\begin{proposition}
Let $H$ be an open normal subgroup of a topological group $(G,\g).$ The following conditions are equivalent:
\ben
\item $H$ is co-minimal in $G.$
\item $H$ is inj-key in $G.$
\item $H$ is strongly open in  $(G,\g).$ 
\een
\end{proposition}
\begin{proof}
(1) $\Rightarrow$ (2): Use Theorem \ref{p:inj-key=co-min for central}.1.\\
(3)  $\Rightarrow$ (1): See item (b) of \cite[Theorem 3.10]{DM10}.\\
(2)  $\Rightarrow$ (3): Let us prove that if $H$ is not strongly open, then it is not inj-key in $(G,\g).$  By definition there exists $\t \in \mathcal{T}_{\downarrow}(G,\g)$ such that $H$ is not $\t$-open. Let $\s$ be the coarsest (not necessarily Hausdorff) group topology on $G$ making $H$ open in $G.$  It is easy to see that as a local base one can take  $\s(e)=\{H\}$ as $H$ is normal. Now, let $\eta :=\sup\{\t,\s\} \in \mathcal{T}_{\downarrow}(G,\g)$. Then $\eta\neq \t$ as $H$ is $\eta$-open. However, $\t|_H=\eta|_H$. Indeed, by Equation \ref{eq:sup-top} we have  $\eta(e)=\{H\cap V: \  V\in \t(e)\}$.  The latter is also local base at the identity for the subspace topologies 
$(H,\eta|_H)$ and $(H,\t|_H).$ So, $\t|_H=\eta|_H$ and  $H$ is not inj-key in $(G,\g).$
\end{proof}
\begin{remark}
The equivalence of conditions (1) and (3) holds true also when $H$ is not normal. If there exists an open inj-key subgroup $H$ that is not strongly open in $(G,\g)$, then this subgroup will  not be co-minimal.
\end{remark}
We list some more properties of inj-key subgroups.
\begin{lemma} \label{examples2} Let $H_2 \leq H_1$ be subgroups of a topological group $(G,\g).$  
	\ben  

	\item 	
If	$H_2$ is inj-key in
	$G$, then $H_1$ is  inj-key in
	$G.$
	\item 	If $H_2$ is  inj-key in $H_1$  and $H_1$ is  inj-key in $G,$ then $H_2$ is inj-key in $G.$
	\item  $H_2$ is  inj-key in $G$ if and only if $cl(H_2)$ is inj-key in $G.$
	\een
\end{lemma}
\begin{proof}
(1) Trivial.

(2) In view of Theorem \ref{p:inj-key=co-min for central}.4, it is equivalent to show that $H_2$ is a key subgroup of  $(G,\eta)$ for every $\eta\in \mathcal{T}_{\downarrow}(G,\g).$ By our assumption, $H_2$ is a key subgroup of 
$(H_1, \eta|_{H_1})$ and $H_1$ is is a key subgroup of  $(G,\eta).$ Now, use Lemma \ref{examples1}.1.

(3) Let $H_1=cl(H_2).$ Assume first that $H_2$ is inj-key in $G.$  By (1),  $H_1$ is  inj-key in
$G.$ Now, suppose that $H_1$ is  inj-key in
$G.$ As $H_2$ is dense in $H_1$ it must be  co-minimal and, in particular, inj-key in $H_1$ in view of  Theorem \ref{p:inj-key=co-min for central}.1.  By (2), we deduce that
 $H_2$ is  inj-key in
$G.$
\end{proof}

\begin{fact} \label{f:strCl-DM} \ 
	\begin{enumerate}
		\item \cite[Theorem 3.10]{DM10} 	Let $H$ be a closed subgroup in $G$. 
		\begin{enumerate}
			\item If $H$ is a co-minimal subgroup, then it is strongly closed. 
			\item If $H$ is a strongly closed normal subgroup of $G$ and the factor group $G/H$ is minimal, then $H$ is co-minimal.
		\end{enumerate}
		\item \cite[Corollary 6.6]{DM10} 
		Let $H$ be a closed central subgroup of $G$. Then 
		$H$ is relatively minimal in $G$ if and only if $H$ is 
		minimal.
	\end{enumerate}
\end{fact}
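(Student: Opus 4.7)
The plan is to prove the three assertions using Merson's Lemma, the Extension Lemma for central subgroups, and direct manipulation of quotient topologies. All three parts avoid heavy machinery and reduce to checking Hausdorffness of the induced topologies on $G/H$ (or $H$).

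For Part 1(a), fix $\t \in \mathcal{T}_{\downarrow}(G,\g)$ and show $H$ is $\t$-closed. Co-minimality of $H$ yields $\t/H = \g/H$ as topologies on $G/H$. Since $H$ is $\g$-closed, $(G/H,\g/H)$ is Hausdorff, hence so is $(G/H,\t/H)$, which forces $H$ to be $\t$-closed. For Part 1(b), let $H$ be strongly closed and normal with $(G/H,\g/H)$ minimal. Given $\t \in \mathcal{T}_{\downarrow}(G,\g)$, strong closedness ensures $H$ is $\t$-closed, so $\t/H$ is a Hausdorff group topology on $G/H$ (using normality of $H$), which is coarser than $\g/H$. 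Minimality of $(G/H,\g/H)$ then forces $\t/H = \g/H$, proving co-minimality. At this point, Merson's Lemma is not required, since we only need the quotient agreement to conclude co-minimality by definition.

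For Part 2, the reverse direction is routine: if $H$ is minimal and $\t \in \mathcal{T}_{\downarrow}(G,\g)$, then $\t|_H \subseteq \g|_H$ is automatically a Hausdorff group topology, so minimality forces equality, giving relative minimality. The forward direction is where centrality enters crucially: assuming $H$ is relatively minimal, take any Hausdorff group topology $\s \subseteq \g|_H$ on $H$. By the Extension Lemma \ref{l:ExtLem}.1 applied to the central subgroup $H$, there exists a Hausdorff group topology $\s^* \subseteq \g$ on $G$ with $\s^*|_H = \s$. Relative minimality of $H$ in $G$ then gives $\s^*|_H = \g|_H$, hence $\s = \g|_H$, proving $H$ is minimal.

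The main technical input is the Extension Lemma, which is already established in this paper, so there is no real obstacle once it is invoked. The only delicate point is to keep track of \emph{where} the Hausdorff property of the induced topology is used: in 1(a) it is Hausdorffness of $\t/H$ deduced from $\t/H = \g/H$; in 1(b) it must be verified via strong closedness before minimality can be applied; and in Part 2 forward direction, Hausdorffness of $\s^*$ on $G$ is exactly what the Extension Lemma guarantees, and is what lets relative minimality do its work.
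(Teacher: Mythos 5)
Your proposal is correct and matches the route the paper intends: the paper states this as a cited Fact from \cite{DM10} without reproducing proofs, but it explicitly remarks that part 2 follows from Lemma \ref{l:ExtLem}.1, which is exactly your forward-direction argument, and your proofs of 1(a) and 1(b) are the standard ones (Hausdorffness of $(G/H,\t/H)$ is equivalent to $\t$-closedness of $H$, plus minimality of the quotient). No gaps; in particular you correctly isolate where closedness, normality, and centrality are each used.
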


Note that Fact \ref{f:strCl-DM}.2 easily follows from Lemma \ref{l:ExtLem}.1. 

\begin{theorem} \label{t:projective} 
	The center $Z$ (the subgroup of all invertible scalar matrices) of the general linear group $\GL(n,\F)$ is 
	co-minimal 
	for every local field $\F$.  
\end{theorem}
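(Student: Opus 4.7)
The plan is to apply Fact~\ref{f:strCl-DM}.1(b) to the normal subgroup $H := Z$ of $G := \GL(n,\F)$. This reduces the problem to two sub-tasks: verifying that $Z$ is strongly closed in $G$, and that the quotient group $G/Z = \PGL(n,\F)$ is minimal in its quotient topology.

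First I would check that $Z$ is strongly closed. Note that $Z$ is characterized purely group-theoretically as the set of elements commuting with every element of $G$; in particular, the underlying set of $Z$ does not change if we replace the topology $\gamma$ by a coarser Hausdorff group topology $\tau \in \mathcal{T}_{\downarrow}(G,\gamma)$. In any Hausdorff topological group, the center is the intersection $\bigcap_{g \in G} \{x \in G : xgx^{-1}g^{-1} = e\}$ of closed sets (since $\{e\}$ is closed and the commutator map is continuous), hence closed. Thus $Z$ is $\tau$-closed for every $\tau \in \mathcal{T}_{\downarrow}(G,\gamma)$, i.e.\ $Z$ is strongly closed.

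Next I would address the minimality of $\PGL(n,\F)$ for a local field $\F$. This is the main obstacle and is really the content of the theorem; the preceding co-minimality machinery only packages it. The authors have already developed minimality results for matrix groups over local fields in \cite{MS-Fermat}, and the natural route is to invoke that work (in the archimedean case, $\PGL(n,\R)$ and $\PGL(n,\C)$ are well-known center-free semisimple Lie groups and their minimality is classical; in the non-archimedean case one uses the structure of $\PGL(n,\F)$ as a totally disconnected locally compact simple-modulo-center algebraic group, where the topology is forced by local compactness together with the abundance of compact open subgroups, e.g.\ via Iwahori-type arguments). In any event, this minimality of $\PGL(n,\F)$ is the non-trivial input.

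Having both ingredients, I would conclude as follows. Since $Z$ is a strongly closed normal subgroup of $G$ with $G/Z = \PGL(n,\F)$ minimal, Fact~\ref{f:strCl-DM}.1(b) yields that $Z$ is co-minimal in $G$. As a concluding remark I would point out that, combined with Theorem~\ref{p:inj-key=co-min for central}.2 (applicable since $Z$ is central), this immediately gives that the restriction map $r_{\F^{\times}} \colon \mathcal{T}_{\downarrow}(\GL(n,\F)) \to \mathcal{T}_{\downarrow}(\F^{\times})$ is an isomorphism of sup-semilattices, matching the statement already advertised in the introduction.
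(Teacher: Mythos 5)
Your proposal is correct and follows essentially the same route as the paper: the authors likewise invoke \cite{MS-Fermat} for the (total) minimality of $\PGL(n,\F)=\GL(n,\F)/Z$, note that $Z$ is unconditionally (strongly) closed, and conclude by Fact~\ref{f:strCl-DM}.1(b). Your explicit justification that the center is closed in every coarser Hausdorff group topology (as an intersection of preimages of $\{e\}$ under continuous commutator maps) is exactly the reason behind the paper's one-line claim of unconditional closedness.
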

\begin{proof}
	By \cite{MS-Fermat}, the projective group $\PGL(n,\F):=\GL(n,\F) / Z$ is totally minimal. 
	Since 
	$Z$ is unconditionally closed in 
	$\GL(n,\F)$,  
	we may apply 
	Fact \ref{f:strCl-DM}.1(b).  
\end{proof}


\begin{corollary} \label{c:co-minCentral is S-closed} 
	Every central closed inj-key subgroup $H$ of $G$ is strongly closed.  
\end{corollary}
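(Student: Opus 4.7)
The plan is straightforward: chain together two results already established in the paper. First I would invoke Theorem \ref{p:inj-key=co-min for central}.2, which says that for a central subgroup $H$ of $(G,\g)$, being inj-key is equivalent to being co-minimal. Applying this to our hypothesized central inj-key subgroup $H$, we immediately conclude that $H$ is co-minimal in $G$.

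Next I would apply Fact \ref{f:strCl-DM}.1(a), which asserts that a closed co-minimal subgroup of a topological group is strongly closed. Since by assumption $H$ is closed in $G$, the previous step gives co-minimality, and the fact delivers the conclusion that $H$ is strongly closed, i.e.\ $H$ remains closed in $(G,\t)$ for every $\t \in \mathcal{T}_{\downarrow}(G,\g)$.

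There is essentially no obstacle here: both the inj-key $\Leftrightarrow$ co-minimal equivalence (in the central case) and the co-minimal $\Rightarrow$ strongly closed implication are already in hand. The only thing to note is that the centrality hypothesis is used solely to pass from inj-key to co-minimality; once co-minimality is obtained, the closedness of $H$ alone suffices for the final step, and no further structural assumption on $G$ is needed.
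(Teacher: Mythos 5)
Your proof is correct and is exactly the paper's main argument: apply Theorem \ref{p:inj-key=co-min for central}.2 to get co-minimality from the inj-key hypothesis on the central subgroup, then Fact \ref{f:strCl-DM}.1(a) to pass from closed co-minimal to strongly closed. (The paper additionally sketches a second, constructive proof via the Extension Lemma, but that is supplementary.)
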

\begin{proof}
	Combine Theorem \ref{p:inj-key=co-min for central}.2 and Fact \ref{f:strCl-DM}.1(a). 
	
	\textit{We provide also an additional (constructive) proof}: 
	Let $\t \in \mathcal{T}_{\downarrow}(G,\g)$ such that $H$ is not $\t$-closed. For $\g$ and $\s:=\t|_H$ consider the canonically defined $\s^* \in \mathcal{T}_{\downarrow}(G,\g)$ (Extension Lemma \ref{l:ExtLem}). Then $\s^*|_H=\t|_H$ and $H$ is $\s^*$-closed. The latter condition implies that $\s^* \neq \t$.   
\end{proof}

Recall 
that a subgroup $H$ of a topological group $(G,\g)$ is said to be \textit{potentially dense} (coming back from A.A. Markov) if there exists $\g_1 \in \mathcal{T}_{\downarrow}(G,\g)$ such that $H$ is $\g_1$-dense.


\begin{remark} \label{r:PotDense}  
	Clearly, any proper potentially dense 
	closed (central) subgroup is not strongly closed, hence also not co-minimal (resp., not inj-key)  by
	 Fact \ref{f:strCl-DM}.1(a) 
	(and Corollary \ref{c:co-minCentral is S-closed}). 
	A very typical example of a potentially dense subgroup is $\Z$ in $\R$. There exists an injective continuous homomorphism 
	$$f \colon \R \to \T^2, \ \ f(t)=(e^{it}, e^{\sqrt{2}it})$$ 
	such that $f(\Z)$ is dense in $\T^2$, hence, also in $f(\R)$.   
	Similar arguments work also for $\Z^n \subset \R^n$, 
	or 
	non-trivial proper 
	$k\Z \subsetneq\Z$. 
\end{remark}

%

\sk 
Recall that the \textit{Bohr topology} $\g_b$ on an  
abelian group $(G,\g)$ coincides with the weak topology generated by all continuous characters $f \colon G \to \T$. 
This is also the finest precompact topology on $G$ coarser than $\g$.
If $G$ is a locally compact abelian (LCA) group, then 
the characters separate the points. 
So, $\g_b$ is Hausdorff. This assignment is functorial. More precisely, every  continuous homomorphism between two LCA groups $h \colon (G_1,\g_1) \to (G_2,\g_2)$ implies the continuity $h \colon (G_1,(\g_1)_b) \to (G_2,(\g_2)_b)$. In fact, the converse is also true. See \cite[Theorem 1.2]{Trig}.  

Every continuous character on a closed subgroup $H$ of a LCA group $G$ can be extended to a continuous character defined on $G$, \cite[Prop. 13.5.5]{ADG}. This implies that $H$ is also $\g_b$ closed in $G$. 

Recall also the following useful known lemma (see for example, \cite[Cor. 11.2.8]{ADG}) for discrete $G$ but actually with the same proof). 
%
\begin{lemma} \label{l:factBohr} 
	$\g_b/H=(\g/H)_b$ holds for every LCA group $(G,\g)$ and its closed subgroup $H$. 	
\end{lemma}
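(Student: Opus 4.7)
The plan is to prove the two inclusions $(\g/H)_b \subseteq \g_b/H$ and $\g_b/H \subseteq (\g/H)_b$ using the two characterizations of the Bohr functor that the excerpt makes available: functoriality under continuous homomorphisms of LCA groups, and the description of $(\cdot)_b$ as the finest precompact topology coarser than the ambient one.

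For the first inclusion I would apply functoriality to the canonical continuous quotient map $q \colon (G,\g) \to (G/H, \g/H)$; note that $G/H$ is LCA because $H$ is closed in the LCA group $G$, so the Bohr construction is available on both sides. Functoriality then yields continuity of $q \colon (G, \g_b) \to (G/H, (\g/H)_b)$. Since $\g_b/H$ is by definition the finest topology on $G/H$ making $q \colon (G, \g_b) \to G/H$ continuous, one concludes $(\g/H)_b \subseteq \g_b/H$.

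For the reverse inclusion, I would verify that $\g_b/H$ satisfies the defining properties of $(\g/H)_b$ as the finest precompact Hausdorff group topology on $G/H$ coarser than $\g/H$. Specifically: (a) $\g_b/H$ is a group topology, as the quotient of a group topology by a subgroup; (b) $\g_b/H \subseteq \g/H$, because $\g_b \subseteq \g$ and taking quotients preserves coarseness; (c) $\g_b/H$ is Hausdorff, because $H$ is $\g_b$-closed (the fact recorded just before the lemma, coming from the extension of continuous characters from closed subgroups); and (d) $\g_b/H$ is precompact, since if $U$ is any $\g_b$-neighborhood of $0$, precompactness of $\g_b$ gives a finite $F \subseteq G$ with $G = FU$, and applying the (always) open quotient map yields $G/H = q(F)\cdot q(U)$, where $q(U)$ is a $\g_b/H$-neighborhood of the identity coset. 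Properties (a)--(d) together with the maximality property of $(\g/H)_b$ among precompact Hausdorff coarsenings of $\g/H$ yield $\g_b/H \subseteq (\g/H)_b$.

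I do not anticipate a serious obstacle: both inclusions reduce to standard universal-property arguments once one invokes Bohr functoriality, the extension of characters from closed subgroups (for the Hausdorffness in (c)), and the finest-precompact characterization of the Bohr topology. The only points deserving a brief sanity check are that $G/H$ is LCA (automatic, since $H$ is closed in an LCA group), and that the precompactness argument in (d) really does use only the openness of $q$ and the existence of a finite translation cover, both of which are formal.
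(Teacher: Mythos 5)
Your proposal is correct and follows essentially the same two-step argument as the paper: functoriality of the Bohr construction applied to the quotient map gives $(\g/H)_b \subseteq \g_b/H$ via the universal property of the quotient topology, and precompactness of $\g_b/H$ together with the finest-precompact characterization of $(\g/H)_b$ gives the reverse inclusion. You merely spell out details (Hausdorffness via $\g_b$-closedness of $H$, the finite-cover argument for precompactness of the quotient) that the paper leaves implicit.
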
 
\begin{proof} 
	Since $\g_b$ is precompact, then also $\g_b/H$ is precompact. By the universality property of the Bohr topology $(\g/H)_b$ on the group $G/H$ 
	we have $\g_b/H \subseteq (\g/H)_b$. As we already mentioned, the continuity of $q \colon (G,\g) \to (G/H,\g/H)$ implies the continuity of the same map with the Bohr topologies $q \colon (G,\g_b) \to (G/H,(\g/H)_b)$. Now, consider the quotient map $q \colon (G,\g_b) \to (G/H, \g_b /H)$. By the definition of the quotient topology 
	we have 
	$(\g/H)_b \subseteq \g_b/H.$ Hence, $\g_b/H = (\g/H)_b$. 
\end{proof}

\begin{theorem} \label{t:inj-key is co-comp in LCA} 
	Let $G$ be a locally compact abelian group and $H$ be its closed  subgroup. 
	\begin{enumerate}
		\item If $H$ is co-minimal, then it is co-compact. 
		\item $H$ is co-minimal 
		if and only if $H$ is strongly closed and co-compact.   
		\item If $G$ is a discrete abelian group, then its subgroup $H$ is co-minimal if and only if $H$ is unconditionally closed in $G$ with 
		finite index. 
	\end{enumerate}
\end{theorem}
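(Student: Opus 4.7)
My plan is to reduce all three parts to part (1), prove (1) via the Bohr topology and Lemma \ref{l:factBohr}, and then deduce (2) and (3) by combining with Fact \ref{f:strCl-DM}.

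For (1), I would argue as follows. Since $(G,\g)$ is LCA, the Bohr topology $\g_b$ is a Hausdorff precompact group topology with $\g_b \subseteq \g$, and in particular $\g_b \in \mathcal{T}_{\downarrow}(G,\g)$. Co-minimality of $H$ therefore forces $\g_b/H = \g/H$. By Lemma \ref{l:factBohr}, the left-hand side equals $(\g/H)_b$. Consequently the locally compact Hausdorff group topology $\g/H$ on $G/H$ coincides with its own Bohr topology, and is therefore precompact. A locally compact precompact Hausdorff group is compact (it is Raikov complete, hence equals its completion, which is its Bohr compactification), so $G/H$ is compact and $H$ is co-compact.

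For (2), the forward implication is immediate from (1) together with Fact \ref{f:strCl-DM}.1(a), which asserts that every co-minimal closed subgroup is strongly closed. For the converse, assume $H$ is strongly closed and co-compact. Then $G/H$ is a compact Hausdorff group, hence minimal. Since $H$ is a (closed, normal because $G$ is abelian) strongly closed subgroup with minimal quotient, Fact \ref{f:strCl-DM}.1(b) delivers co-minimality of $H$ in $G$. For (3), I would invoke (2) together with two elementary observations: in a discrete group the notion of strongly closed subgroup coincides with the classical notion of an unconditionally closed subgroup (as noted just before Fact \ref{f:strCl-DM}), and a subgroup $H$ of a discrete group $G$ is co-compact if and only if $[G:H] < \infty$, since a compact Hausdorff discrete space is finite.

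The single non-routine ingredient is the identity $\g_b/H = (\g/H)_b$, i.e., Lemma \ref{l:factBohr}, which is the one place where LCA-specific theory (extendability of characters from closed subgroups, universality of the Bohr functor) intervenes. Once that identity is granted, the rest of the argument is a short assembly of the previously established facts, so I do not anticipate any further obstacle.
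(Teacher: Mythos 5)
Your proof is correct, and it uses the same central ingredients as the paper: the Bohr topology $\g_b$, the identity $\g_b/H=(\g/H)_b$ of Lemma \ref{l:factBohr}, and the assembly of (2) and (3) from Fact \ref{f:strCl-DM}. The one genuine difference is in part (1). You apply the definition of co-minimality directly to the coarser Hausdorff group topology $\g_b$ to get $\g_b/H=\g/H$ in one line. The paper instead works from the \emph{inj-key} hypothesis (which for a closed central subgroup is equivalent to co-minimality by Theorem \ref{p:inj-key=co-min for central}.2, but is a priori weaker): since inj-key gives no direct control of quotient topologies, the paper forms $\s:=\sup\{\g_b,\,q^{-1}(\g/H)\}$, observes $\s|_H=\g_b|_H$, invokes injectivity of $r_H$ to force $\s=\g_b$, and only then extracts $\g/H=\g_b/H$. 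Your route is shorter and entirely adequate for the statement as formulated; the paper's route buys the formally stronger conclusion that already inj-key (not just co-minimal) implies co-compact, which is consistent with the theorem's label and the surrounding discussion of $r_H$. Your treatment of (2) and (3) — compact Hausdorff quotients are minimal, strongly closed equals unconditionally closed in the discrete case, co-compact equals finite index in the discrete case — matches the paper's intent exactly.
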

\begin{proof} (1) 
	Let $\g$ be the topology of $G$ and $\g_b$ the Bohr topology (which is always Hausdorff for every LCA group $G$). 
	Let $q \colon G \to G/H$ be the factor map and $q^{-1}(\g /H)$ be the preimage topology on $G$. Define $\s:=\sup\{\g_b, q^{-1}(\g /H)\}$. Then $\s$ is a Hausdorff group topology on $G$ and $\g_b\leq \s \leq \g$. Furthermore, $\s|_H=\g_b|_H$.  Since $H$ is inj-key, 
	we necessarily have $\g_b=\s$. Therefore, $\s/H=\g_b/H$. 
	
	On the other hand, $\s /H \supseteq q^{-1}(\g /H) /H =\g/H$. Since $\s \leq \g$, we get $\s /H =\g/H$. So, $\g /H =\g_b /H$. 
	Lemma \ref{l:factBohr}  guarantees that $\g_b/H=(\g/H)_b$. 
	So, the factor group topology on the locally compact abelian group $(G/H, \g /H)$ coincides with its Bohr topology. The latter is always precompact. Therefore, $G/H$ must be compact.  
	
	(2) If $H$ is co-minimal in $G$, then $H$ is co-compact by (1) and strongly closed by Fact \ref{f:strCl-DM}.1(a). 
	Conversely, if $H$ is strongly closed and co-compact, 
	then we can apply 
	 Fact \ref{f:strCl-DM}.1(b) 
	 or Theorem \ref{p:inj-key=co-min for central}.3. 
	
	(3) Easily follows from (2) as a particular case. 
\end{proof}

\begin{example} \label{e:co-minInLCA} 
	There exists a discrete abelian countable infinite (non-minimal) group $G$ containing a proper co-minimal subgroup $H$. 
\end{example}
\begin{proof}
	Let $p,q$ be distinct primes, 
	$P:=\Z/p\Z$ and $Q:=\Z/q\Z$ are cyclic groups with $p$ and $q$ elements, respectively. Consider the direct sum $H:=P^{(\omega)}$ of countably many copies of $P$. Then the discrete group $G:=Q \oplus H$ is the desired countable group by Theorem  \ref{t:inj-key is co-comp in LCA}.3. Indeed, $H$ is unconditionally closed in $G$ because $H=\{g \in G: \  pg=0\}$ is an elementary algebraic set in $G$.  
\end{proof}


\begin{remark} \label{r:super-rem} 
	A co-compact key subgroup need not be inj-key. 
	
	(a) $\Z^n$ is key in $\R^n$ but not inj-key. 
	
	(b) Any 
	proper non-trivial subgroup $k\Z$ of $\Z$ 
	is key but not inj-key. 
	
	\sk 		
	These subgroups are not inj-key by Remark \ref{r:PotDense}. 
	In both cases the subgroups are key subgroups being locally compact and co-compact (Proposition \ref{p:cocompact}.1).

	\end{remark}

	\begin{example} \label{ex:inj-keyCorHeis}  \ Co-minimal subgroups are inj-key by Theorem \ref{p:inj-key=co-min for central}.1. As a corollary we mention here some remarkable cases where the restriction map 
$r_A \colon \mathcal{T}_{\downarrow}(G) \to \mathcal{T}_{\downarrow}(H)$ is bijective (isomorphism of semilattices) for 
the center $H:=Z(G)$. 

\begin{enumerate}
	\item $G:=H(w)$ is the Heisenberg group of a strongly minimal biadditive mapping $w \colon E \times F \to A$.  
	In particular this is true in the following cases:
	\begin{enumerate}
		\item For every local field $\F$ and the map $w \colon \F \times \F \to \F$. 
		
		This observation  seems to be new even for the classical 3-dimensional Heisenberg group.
		\item 	
		For every Banach space $V$ and the canonical bilinear map $w \colon V \times V^* \to \R$. 
		
	\end{enumerate}
	
	\item $G:=\UT (n,\F)$ be the matrix group of all unitriangular matrices where $\F$ be a local field or an archimedean absolute valued field.   Theorem \ref{t:LocField} 
	implies that (the restriction on its center) 
	$r_{\F} \colon \mathcal{T}_{\downarrow}(\UT (n,\F)) \to \mathcal{T}_{\downarrow}(\F)$ 
	is an isomorphism.

	\item Let $\GL(n,\F)$ be the general linear group defined over local field $\F$. 
	By Theorem \ref{t:projective}, the center $Z$ (which is isomorphic to $\F^{\times}:=\F\setminus \{0\}$) is co-minimal in $\GL(n,\F)$. Hence, (the restriction on its center) $r_{\F^{\times}} \colon \mathcal{T}_{\downarrow}(\GL(n,\F)) \to \mathcal{T}_{\downarrow}(\F^{\times})$ is an isomorphism. 
\end{enumerate}
\end{example}


\sk 
\subsection{More questions} 

\begin{question} \label{q:key} 
Study (characterize) closed subgroups $H$ in \textbf{locally compact abelian} groups $G$ which are key. 
\end{question} 

\begin{remark} \label{r:submaximal} 
Using results of Dikranjan-Tkachenko-Yaschenko \cite[Section 4]{DTY} one may characterize key subgroups of \textbf{discrete abelian} groups. Namely, 
a subgroup $H$ of a discrete abelian group $G$ is a key subgroup if and only if $H$ is open in the \textit{submaximal topology} $M_G$.  
Note that key subgroups in discrete groups are exactly subgroups which are \textbf{not} \textit{potentially discrete} subgroups in terms of \cite{DTY}.  


\end{remark}


\begin{prob} \label{q:main}  \ 
	Find examples of inj-key subgroups which are not  co-minimal. 
\end{prob}

As we know by Theorem \ref{p:inj-key=co-min for central}.2, every central inj-key subgroup is co-minimal. In particular, it is always the case in abelian topological groups. So it makes sense to examine subgroups of massive non-commutative groups.  
Below in Subsection \ref{s:counterexample} we give a concrete counterexample which resolves Problem \ref{q:main}. However, it seems to be an attractive question to find more 
of  such examples.  
In particular, what about the closed embeddings $i \colon H \hookrightarrow G$ which are epimorphisms (in the category of Hausdorff topological groups)? 


\begin{proposition}
Let $H$ be a proper co-compact  Raikov complete subgroup of a topological group $(G,\g).$ If $G$ is non-minimal  and $H$ is    $\s$-dense in any strictly coarser group topology $\sigma\subseteq \gamma$,  then $H$ is inj-key but not co-minimal.
\end{proposition}
\begin{proof}
Let $\sigma\subseteq \gamma$ be a strictly coarser group topology on $G.$ Note that such a group topology exists since $G$ is not minimal. By our assumption, $\g/H\neq \s/H$ as $\g/H$ is Hausdorff while $\s/H$ is the trivial topology on a non-trivial group. This proves that $H$ is not co-minimal. 

To prove that $H$ is inj-key in $(G,\g)$ it is equivalent to show by Theorem \ref{p:inj-key=co-min for central}.4  that $H$ is a key subgroup of $(G,\eta)$ for every $\eta\in \mathcal{T}_{\downarrow}(G,\g).$ If $\eta=\g$, then we may use  Proposition \ref{p:cocompact}.1. If $\eta$ is strictly coarser than $\g$, then by our assumption $H$ is $\eta$-dense in $G.$ This implies that $H$ is   a key (in fact, even co-minimal) subgroup of $(G,\eta).$ 
\end{proof}

\sk 
\subsection{Inj-key subgroup which is not co-minimal}
\label{s:counterexample} 


V. Pestov proposed to examine Polish groups $G$ with \textit{metrizable} universal minimal $G$-flow $M(G)$. 
We consider here a particular case of such Polish $G$ which was analyzed in 
 \cite{GM-UltraHom}.

Let $\Q_0:=\Q/\Z$ be the circled rationals with its discrete topology. Consider the group $G:=\Aut (\Q_0)$
of all circular order preserving permutations of $\Q_0.$  
Note that the action $G \times \Q_0 \to \Q_0$ is \textit{circularly ultrahomogeneous}, meaning that for every circular order preserving bijection between finite subsets $f \colon A \to B$ there exists $g \in G$ which extends $f$. 
Under the pointwise topology $G:=(\Aut (\Q_0), \t_0)$ is a Polish topological group. 
Choose $a_0 \in \Q_0$. Then the stabilizer 
$$H:=G_{a_0}=\{g \in \Aut (\Q_0): \ ga_0=a_0 \}$$ is a clopen  subgroup of $G$. It is topologically isomorphic to $\Aut(\Q,\leq)$. 
By an important well known result of V. Pestov \cite{Pest98}, this Polish group $\Aut(\Q,\leq)$ is extremely amenable.   

 \begin{remark} \label{r:counterexample} 
The subgroup  $\Aut(\Q,\leq)$ of $\Aut (\Q_0)$ is  inj-key but not  co-minimal.
 	\end{remark}
The careful explanation requires more definitions and several technical results.  
For simplicity we do not include the proof 
here. However, we are going to give 
the details (and some interesting related results and questions) in a separate article.

\sk

		\bibliographystyle{plain}


\end{document}